\definecolor{refkey}{gray}{.4}
\definecolor{labelkey}{gray}{.4}
\newtheorem{Theorem}{Theorem}[section]
\newtheorem{Lemma}[Theorem]{Lemma}
\newtheorem{Proposition}[Theorem]{Proposition}
\newtheorem{Corollary}[Theorem]{Corollary}
\newtheorem{Remark}[Theorem]{Remark}
\newtheorem{Definition}[Theorem]{Definition}
\newtheorem{Assumption}[Theorem]{Assumption}
 \definecolor{darkgreen}{rgb}{0,0.4,0}
\definecolor{light}{gray}{0.9}
\newcommand{\cA}{\ensuremath{\mathcal A}}
\newcommand{\cB}{\ensuremath{\mathcal B}}
\newcommand{\cC}{\ensuremath{\mathcal C}}
\newcommand{\cE}{\ensuremath{\mathcal E}}
\newcommand{\cF}{\ensuremath{\mathcal F}}
\newcommand{\cL}{\ensuremath{\mathcal L}}
\newcommand{\cN}{\ensuremath{\mathcal N}}
\newcommand{\cP}{\ensuremath{\mathcal P}}
\newcommand{\cR}{\ensuremath{\mathcal R}}
\newcommand{\cX}{\ensuremath{\mathcal X}}
\newcommand{\cZ}{\ensuremath{\mathcal Z}}
\newcommand{\bbC}{{\ensuremath{\mathbb C}} }
\newcommand{\bbE}{{\ensuremath{\mathbb E}} }
\newcommand{\bbI}{{\ensuremath{\mathbb I}} }
\newcommand{\bbL}{{\ensuremath{\mathbb L}} }
\newcommand{\bbM}{{\ensuremath{\mathbb M}} }
\newcommand{\bbN}{{\ensuremath{\mathbb N}} }
\newcommand{\bbP}{{\ensuremath{\mathbb P}} }
\newcommand{\bbR}{{\ensuremath{\mathbb R}} }
\newcommand{\bbT}{{\ensuremath{\mathbb T}} }
\newcommand{\bbY}{{\ensuremath{\mathbb Y}} }
\newcommand{\bbZ}{{\ensuremath{\mathbb Z}} }
\let\a=\alpha \let\b=\beta   \let\d=\delta  \let\e=\varepsilon
 \let\g=\gamma     \let\k=\kappa  \let\l=\lambda
      \let\o=\omega    \let\p=\pi  
  \let\s=\sigma \let\t=\tau   
\let\D=\Delta      
\let\O=\Omega
\newcommand{\be}{\begin{equation}}
\newcommand{\en}{\end{equation}}
\newcommand{\bes}{\begin{equation*}}
\newcommand{\ens}{\end{equation*}}
\newcommand{\ra}{\rangle}
\newcommand{\la}{\langle}
\newcommand{\rosso}{\textcolor{black}}
\author[A.\ Faggionato]{Alessandra Faggionato}
\address{Alessandra Faggionato.
  Dipartimento di Matematica, Universit\`a di Roma `La Sapienza'
  P.le Aldo Moro 2, 00185 Roma, Italy}
\email{faggiona@mat.uniroma1.it}
\author[V.\ Silvestri]{Vittoria Silvestri}
\address{Vittoria Silvestri.
  Dipartimento di Matematica, Universit\`a di Roma `La Sapienza'
  P.le Aldo Moro 2, 00185 Roma, Italy}
\email{silvestri@mat.uniroma1.it}
\title[Time-dependent and time-periodic linear response]{A martingale approach to time-dependent and time-periodic linear response in Markov jump processes}
\begin{document}

\maketitle

  
  \begin{abstract}
 We consider a Markov jump process on a general  state space to which we apply
 a time-dependent weak perturbation over a finite time interval.   By martingale-based stochastic calculus, under a  suitable exponential moment bound for the perturbation we show that    the perturbed process does not explode almost surely and  we study the linear response  (LR) of  observables and additive functionals. When the unperturbed process is stationary, the above LR  formulas become computable in terms of  the steady state   two-time correlation function and of the stationary distribution. Applications are discussed for birth and death processes, random walks in a  confining potential, random walks in a random  conductance field.
 We then move to a Markov jump process on a  finite state space and investigate the LR  of  observables and additive functionals  in the oscillatory steady state (hence, over an infinite time horizon), when the perturbation is time-periodic. As an application we provide a formula for  the complex mobility matrix of  a random walk on a discrete $d$-dimensional torus, with possibly heterogeneous jump rates.

 \medskip
 
\noindent {\em Keywords}:    Markov jump process, linear response, empirical additive functionals, time-inhomogeneous dynamics, oscillatory steady state, complex mobility matrix.

\medskip

\noindent{\em AMS 2010 Subject Classification}: 
 60J25, 
 	82C05,  	
	82C31,  	
	60J76. 	
   \end{abstract}
  
  

  \section{Introduction}
 Markov jump processes in continuous time and  with  general state space form  a   fundamental  class of stochastic processes. They are often called  Markov chains  when the state space is discrete and countable (finite or infinite).
If the state space is infinite, the phenomenon of explosion can take place and it consists of the accumulation of infinitely many jumps in finite time.  We consider here an unperturbed system modelled by a general  Markov jump process  with time-homogeneous transition kernel, assuming that a.s.\ explosion does not take place.

We study the linear response of the system in two regimes. In the first regime we take a time-dependent weak perturbation  and a fixed  initial distribution, i.e.~not depending on the perturbation. In the second regime, restricting to  finite state spaces, we consider a time-periodic weak perturbation and take as initial distribution the one producing the oscillatory steady state in the perturbed dynamics. In both regimes  we focus on the linear response of the expected value of observables at a fixed time and of the expected value of empirical additive functionals in the time interval $[0,t]$ under observation, while in the second regime we also 
give a mathematical formulation of the complex mobility matrix.

In the last years several rigorous results  have been obtained for the linear response (and in particular for the Einstein's relation) of  Markov processes, even in a random environment, under  a weak external field homogeneous in time and space, with initial distribution given by  the stationary  one  for the perturbed dynamics
 (see e.g.~\cite{FGS2,GGN,GMP,KO,LR,MP} and references therein). Often the unperturbed  dynamics in these  models is  reversible. Our context is simpler from a technical viewpoint, on the other hand we aim at providing (in a rigorous way) explicit formulas for the linear response under time-dependent or time-periodic weak external  fields, not necessarily homogeneous in space  (without restricting to a  reversible unperturbed dynamics).    As a  natural development one could then consider e.g.
  the linear response in the oscillatory steady state for random walks in random environments (our second regime covers the case of a random walk on the lattice in a periodized environment).

We  now detail  our results in the first regime.
 We  apply a time-dependent weak perturbation such that the perturbed process is again a Markov jump process (now with time-dependent transition kernel), whose law on the path space associated to a finite time interval $[0,t]$ of observation is absolutely continuous (when explosion does not take place) w.r.t.\  the corresponding law of the unperturbed Markov jump process.  
 We isolate an exponential moment condition (see  Condition $C[\nu,t]$ in Definition \ref{def_papaya}) under which we  show  that the perturbed process a.s.~does not explode (see Theorem \ref{th:explosionX})  and linear response takes place. More precisely,   the expected value of the observables at time $t$, as well as of empirical additive functionals in the time-interval $[0,t]$, is differentiable in the perturbation  parameter $\l$ at $\l=0$, and we  provide formulas for  the derivative at $\l=0$ (see Theorem \ref{th:JS}). We point out that non--explosion is unstable under weak perturbation (even of a mild form) as shown by the counterexample in Section \ref{contro}. 
When the initial distribution is stationary for the unperturbed process, our formulas  allow explicit computations in terms of the stationary distribution and the  two-time correlation function of the unperturbed process (see Theorem \ref{cor:JS}).  As examples of  applications of our results, in Section \ref{sec_esempi} we discuss birth and death processes, random walks on $\bbZ^d$ in a confining potential and random walks in a random conductance field.

In deriving Theorems \ref{th:explosionX}, \ref{th:JS} and \ref{cor:JS} mentioned above, we do not use  operator perturbative theory.  Our starting point is the explicit Radon-Nykodim derivative of the law of the perturbed process restricted to paths (without explosion) in the time interval $[0,t]$ w.r.t.\ the law of the unperturbed process. Using  stochastic calculus for jump processes (cf.~\cite{JS} and the short overview provided in Section \ref{sec:SC}), and in particular introducing suitable martingales, we then obtain  both the non-explosion of the perturbed process and the LR  formulas for additive functionals which are cumulative at jump times. We point out that analyzing  the  Radon-Nykodim derivative to derive  LR  has been a common approach in several contributions in probability (see e.g.\  \cite{FGS2,GGN,GMP,KO,LR,MP} and references therein), more often known under the name of ``trajectory-based approach" in statistical physics (see  e.g.\ \cite{BM,M1} and references therein). We mention  the paper  \cite{HM} of Hairer and Majda for a different approach to  the study of  LR in  stochastic systems, and that of Dembo and Deuschel \cite{DD} in which LR, and in particular the Fluctuation Dissipation Theorem,  is discussed as a result of perturbations of Markovian semi-groups.

We now move to the second regime.
When the perturbation (in the same form of the first regime) is time-periodic, the perturbed system admits an oscillatory steady state (OSS), which is left invariant by time translations by multiples of the period. It is then natural to investigate  the LR in the OSS (which is now an infinite-time horizon problem). The rigorous derivation of the existence of the OSS and of the LR is,  in general, not a simple problem, especially if one considers  stochastic processes in a random environment (we refer to \cite{FM} for results on  reversible models without random environment). We restrict here to a finite state space and in Theorems \ref{alpha_omega}, \ref{th_kobo} and \ref{maldive} we describe the LR for the expected value of observables and additive functionals in the OSS. Here we use both matrix perturbation theory and our previous results for the LR over  a finite observation time interval.  As a special model for transport in heterogeneous media, we consider as unperturbed process a random walk on a discrete $d$-dimensional torus with heterogeneous jump rates   (equivalently one could consider  a random walk on $\bbZ^d$ with spatially periodic jump rates). \rosso{In Theorem \ref{teo_CM} for nearest-neighbor jumps and in Theorem \ref{teo_CM_esteso} for long jumps, we derive}  a formula for the complex mobility matrix $\s(\o)$ when the perturbation is of cosine-type in time     (see \cite[Section~1.6]{KTH} for some  examples of complex mobility). In  Section \ref{sec_esempi}  we compute  $\s(\o)$ explicitly in particular cases. When the system is very heterogenous $\s(\o)$ cannot be computed explicitly, but our formulas for $\s(\o)$ remain useful to investigate properties of $\s(\o)$ (as in \cite{FM})  and to prove  homogenization  of $\s(\o)$ under the infinite volume limit   in the  case of random unperturbed jump rates (cf.~\cite{FS}). We also mention  \cite{JPS} for rigorous LR results in the OSS of Langevin dynamics.

\medskip

\noindent
{\bf Outline of the paper}: In Section \ref{mango25} we introduce the unperturbed and the perturbed Markov jump processes, Condition $C[\nu,t]$ and we  discuss explosion.  In Section \ref{sec_response} we present our main results concerning linear response in a finite time window and with a fixed initial distribution. In Section \ref{sec_OSS} we focus on the linear response in the oscillatory steady state of an irreducible  Markov chain  with finite state space and  under time-periodic perturbation. In Section \ref{sec_CM} we analyse the complex mobility matrix for a random walk on a discrete torus with heterogenous jump rates. In Section \ref{sec_esempi} we discuss several examples. In Section \ref{sec:SC} we collect some useful facts from the theory of stochastic calculus for processes with jumps. Sections \ref{sec:preliminary} to 
\ref{tortelli} and  Appendix \ref{appendix} are devoted to proofs.  Finally in Appendix  \ref{sec:pollofritto}  we  consider time-independent perturbations and 
we comment on how our  results in Section \ref{sec_response} compare to  known linear response results  when starting with the invariant distribution of the perturbed process.

 \section{Continuous-time Markov jump processes}\label{mango25}
 
\subsection{Unperturbed Markov jump process}
Let $(\cX, \cB)$ be a measure space such that singletons $\{x\}$ are measurable.  
We consider the Markov jump process  $(X_t)_{t\geq 0}$  with initial distribution $\nu$ and
transition kernel given by $r(x,dy)$. Here $\nu$ is a given probability measure on $(\cX, \cB)$, and  $r(x,dy)$ satisfies the following:  
\begin{itemize}
\item For any $x\in \cX$,  $r(x,\cdot)$ is a measure with finite and positive total mass on $(\cX, \cB)$, and 
\item For any $B\in \cB$, the map $\cX \ni x \mapsto r(x, B)\in [0,+\infty)$ is measurable. 
\end{itemize} 
We define the holding time parameter
\be\label{7guitar}
\hat r(x):= r(x, \cX)\in (0,+\infty) \,, 
\en
and assume that $r(x,\{x\})=0$ without loss of generality. 
Then the stochastic dynamics of $(X_t)_{t\geq 0}$ is  described as follows. At time $t=0$ the Markov jump  process starts with $X_0 $ having distribution $\nu$. Once arrived at $x$, the  process waits there an exponential time with parameter $\hat r (x)$ (independently from the rest), after which it jumps to $y$ with jump probability $r(x,dy)/\hat r(x)$. 

Note that, when $\cX$ is infinite,  such a process may explode in finite time, i.e.~it may be the case that \rosso{$\tau_\infty<+\infty$},  where $\tau_\infty$ denotes the explosion time defined as the supremum of the jump times. By adding a cemetery state $\dagger$ to the state space $\cX$ and setting $X_t = \dagger$ for all $t \geq \tau_\infty$,  we may assume that the Markov jump process is defined for all times.

If $X_0$ has distribution $\nu$, we write  $\bbP_\nu$ for the probability associated to the unperturbed process and $\bbE_\nu$ for the corresponding expectation.

\subsection{Non--explosion of the unperturbed process}\label{sec_kalush}
The following assumption will be understood throughout the text, without further mention:
\subsection*{Assumption}
\emph{From now on we fix a probability measure $\nu$ on $\cX$ corresponding to  the distribution of $X_0$, and assume non--explosion of the unperturbed process $\bbP_\nu$--almost surely, without further mention. When $\nu$ is the stationary distribution we will denote it by $\pi$ (see Section \ref{sec:stationary}). }
\medskip

 Trivially,  if  $\sup_{x\in \cX} \hat r(x)<+\infty$, then the unperturbed process does not explode $\bbP_\nu$--a.s.  as can be  easily checked by a suitable coupling with a Poisson process. When  $\hat r(\cdot)$ is unbounded, the existence of a Lyapunov function is enough to guarantee non--explosion. Let us explain this point in more  detail.  
 Given a measurable function $f:\cX\to \bbR$ such  that  either $\int _{\cX} |f(y)| r (x,dy)<+\infty$ for all $x\in \cX$, or $f\geq 0$, or $f\leq 0$, we define
 \be\label{usignolo}
 Lf (x) := \int _{\cX} [f(y)-f(x)] r (x,dy)\,.
 \en  
 Note that, due to the assumptions on $f$, the r.h.s.\ of \eqref{usignolo} is well defined in $\bbR\cup \{-\infty,+\infty\}$. We call the above operator $L$ the formal generator of the Markov jump process.
Then, by \cite[Theorem~4.6]{V}, for the unperturbed process not to explode for any initial point (and therefore  also  $\bbP_\nu$--a.s.) it suffices that there exist a constant $C\geq 0$  and  a non--negative function $U$ on $\cX$ such that
\be\label{grano}
LU (x) \leq C U(x) \qquad \forall x \in \cX
\en
and $U(x)\to +\infty$ whenever $\hat r(x)\to +\infty$.

\subsection{Perturbed Markov jump process}
 We fix a \underline{bounded} measurable function $g: [0,+\infty ) \times \cX \times \cX \to \bbR$. Given $\l>0$, the $\l$--perturbed Markov jump process $(X^\l_t)_{t\geq 0}$ is the time--inhomogeneous Markov jump process  with initial distribution $\nu$ and 
transition kernel
 \begin{equation}\label{kolpakovZZZ}
  r^\l_t (x,dy) = r (x,dy) e^{ \l g(t,x,y) }\,.
 \end{equation}
The precise definition of $X^\l:=(X^\l_t)_{t\geq 0}$  can be given in terms  of piecewise deterministic Markov processes (PDMPs) (cf.~\cite{D}):   $\bigl( t, X^\l_t \bigr)_{t\geq 0}$ is the time--homogeneous PDMP with  vector field $\partial _t$ and transition kernel $Q( (s,x), (dt, dy) )= \d_s(dt) r^\l _t(x,dy) $. To recall the construction of $X^\l$ we introduce the 
  holding time parameters 
  \be\label{7guitarZZZ} \hat{r}^\l_t (x) := \int _{\cX} r^\l _t(x,dy)= \int_{\cX} r(x,dy) e^{ \l g(t,x,y) }\,.
\en
Note that, as the function $g$ is bounded and due to \eqref{7guitar}, we have $\hat{r}^\l_t (x)\in (0,+\infty)$ for all 
$x\in \cX$. Then, 
 up to the possible explosion time $\tau_\infty ^\l$,   the process  $X^\l_t $ can be realized as follows.  
Starting from a state $x$, the Markov jump process   spends at  $x$ a random time $\tau^\l_1$ such that 
$$ P( \tau^\l_1 >t)= \exp \left\{ - \int _0 ^t \hat{r}^\l _s(x ) ds \right\} \,.
$$
Knowing that $\tau^\l_1=t_1$, at time $t_1$ the Markov jump process  jumps to a new state $x_1$ chosen randomly with probability $r^\l_{t_1}(x,d x_1 )/ \hat{r}^\l_{t_1}(x)$. It then  waits at $x_1$  until  the time $\tau _2^\l>t_1$ with law
$$ P ( \tau^\l_2>t)= \exp \left\{ - \int _{t_1} ^{t} \hat{r}^\l_s(x_1) ds \right\}\,, \qquad t\geq t_1 \,.
$$
Knowing that $\tau_2^\l=t_2$, at time $t_2$ the Markov jump process  jumps to a new state $x_2$ chosen randomly with probability $r^\l_{t_2}(x_1,dx_2)/ \hat{r}^\l_{t_2}(x_1)$,
and so on. Again if the process explodes in finite time we set $X^\l_t = \dagger$ for all $t\geq \tau^\l_\infty$, so that the perturbed process is well defined for all times. 
%

\begin{Remark}  In what follows we will mainly be  interested in the perturbed process in some time interval $[0,t]$.  Due to the above construction, it is clear that then only the value of $g$ up to time $t$ is relevant. As a consequence, in the rest $g$ will simply be a bounded measurable function defined for times varying in the 
 observation time interval.
\end{Remark}

If $X^\l_0$ has distribution $\nu$, we write  $\bbP_\nu$ for the probability associated to the perturbed process and $\bbE_\nu$ for the corresponding expectation (the notation is the same as the one we use for the unperturbed process, but the event and function under consideration will present the superscript $\l$).

\subsection{Finite exponential moments condition}
Let us introduce the following notation, that will be used throughout. 
For  $\a:[0,t] \times \cX \times \cX \to\bbR$ measurable function and  $r(x,dy) $ transition kernel of the unperturbed dynamics, the contraction of $\a$ with respect to the  kernel $r$ is defined by
	\begin{equation} \label{def:contractionX}
	 \a_r (s,x) := \int_\cX \a(s,x,y) r(x,dy) 
	 \end{equation}
	 \rosso{(when the integral in the r.h.s. is well posed)}.
With this notation in place we can define the finite exponential moments condition, which in particular will assure the linear response 
regime when applied to $\a=g$.

\begin{Definition}[Exponential moments condition]\label{def_papaya}
We say that $\a:[0,t] \times \cX \times \cX \to\bbR$ satisfies Condition $C[\nu,t]$ with parameter  $\theta>0$ if 
\begin{equation}\label{papayaX}
\bbE_\nu \Big[ \exp\Big\{\theta  \int_0 ^t | \a|_r (s, X_s)  ds \Big \}\Big]<+\infty\,.
\end{equation}
We say that $\a$ satisfies Condition $C[\nu,t]$ if the above holds for some parameter $\theta>0$.
\end{Definition}
We now give a  criterion assuring  Condition $C[\nu,t]$. Recall from \eqref{usignolo}  the definition of $Lf$.

\begin{Lemma}\label{tik_tok}  For a given function $\alpha : [0,t] \times \cX \times \cX \to \bbR$ assume that there exist a function $U : \cX \to \bbR$  and positive  constants $\theta,C,c$  such that
\begin{itemize}
\item[(a)]  $U(x) \geq c  $ for all $x\in \cX$;
\item[(b)]    $U_r (x) : = \int_{ \cX} U(y) r(x,dy)  <+\infty$ for all $x\in \cX$;
 \item[(c)]  $L U \leq C U  - \theta \,|\a |_r  U  $;
 \item[(d)] $\nu[ U] <+\infty$.
 \end{itemize}
 Then $ \a$ satisfies Condition $C[\nu,t]$ with parameter $\theta$. 
\end{Lemma}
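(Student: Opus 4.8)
The plan is to convert the Lyapunov-type inequality (c) into control of the exponential functional through a supermartingale argument. Write $A_t := \int_0^t |\a|_r(s,X_s)\,ds$, so that condition $C[\nu,t]$ is exactly the requirement $\bbE_\nu[e^{\theta A_t}]<+\infty$ for some $\theta>0$. Fix $\theta:=\sigma$ (in fact any $\theta\in(0,\sigma]$ will do) and introduce the process
\[ W_t := U(X_t)\,e^{\theta A_t}. \]
Since $U\geq c>0$ by (a), we have $e^{\theta A_t}\leq c^{-1}W_t$, so it suffices to prove $\bbE_\nu[W_t]<+\infty$, and more precisely a bound of the form $\bbE_\nu[W_t]\leq \nu[U]\,e^{Ct}$.

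First I would compute the semimartingale decomposition of $W_t$ using the stochastic calculus for jump processes. The functional $A_t$ is absolutely continuous with $\frac{d}{dt}A_t=|\a|_r(t,X_t)$, whereas $U(X_t)$ is a pure-jump process whose compensator is governed by $L$: the process $N_t:=U(X_t)-U(X_0)-\int_0^t LU(X_s)\,ds$ is a local martingale, which is well defined precisely because (b) guarantees $U_r<+\infty$ and hence $LU$ finite. Applying the product rule to $U(X_t)\,e^{\theta A_t}$, and using that $e^{\theta A_t}$ is continuous of finite variation, one obtains
\[ W_t = U(X_0) + \int_0^t e^{\theta A_s}\,\big[\, LU(X_s) + \theta\,|\a|_r(s,X_s)\,U(X_s)\,\big]\,ds + \widetilde M_t, \]
with $\widetilde M_t:=\int_0^t e^{\theta A_s}\,dN_s$ a local martingale. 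The crucial structural point is that the weight $e^{\theta A_s}$ multiplies the jump compensator $LU(X_s)$ in exactly the same way as it multiplies the continuous drift $\theta\,|\a|_r U$, so that the two contributions add up inside one bracket.

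Now I would insert the Lyapunov bound. By (c), for every $s$ and $x$,
\[ LU(x) + \theta\,|\a|_r(s,x)\,U(x) \leq C\,U(x) + (\theta-\sigma)\,|\a|_r(s,x)\,U(x) \leq C\,U(x), \]
since $\theta=\sigma$ and $|\a|_r\geq 0$, $U\geq 0$. Hence the drift of $W_t$ is dominated by $C\,e^{\theta A_s}U(X_s)\,ds=C\,W_s\,ds$. It then remains to pass from this pathwise/local statement to a genuine bound on expectations, and this is the main obstacle, because both $U$ and $\widetilde M$ may be unbounded. I would localize along the jump times $T_n$ of $(X_t)_{t\geq 0}$ (if needed intersected with $\{U(X_s)\leq n\}$), making $\widetilde M_{t\wedge T_n}$ a true martingale; since the unperturbed process is non-explosive $\bbP_\nu$-a.s., one has $T_n\uparrow+\infty$ almost surely. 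Taking expectations in the stopped identity and using the drift bound yields
\[ \bbE_\nu[W_{t\wedge T_n}] \leq \nu[U] + C\int_0^t \bbE_\nu[W_{s\wedge T_n}]\,ds, \]
which is finite at $s=0$ by (d); Gronwall's lemma then gives $\bbE_\nu[W_{t\wedge T_n}]\leq \nu[U]\,e^{Ct}$ uniformly in $n$. Letting $n\to\infty$ and invoking Fatou's lemma produces $\bbE_\nu[W_t]\leq \nu[U]\,e^{Ct}<+\infty$, whence $\bbE_\nu[e^{\theta A_t}]\leq c^{-1}\nu[U]\,e^{Ct}<+\infty$, which is exactly $C[\nu,t]$.

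I expect the delicate points to be, first, the rigorous justification that $N_t$ is a local martingale and that the localizing sequence renders $\bbE_\nu[W_{t\wedge T_n}]$ finite — one must verify that the stopped drift and the stopped martingale are genuinely integrable, which is where hypothesis (b) and the non-explosion assumption do the real work — and, second, the interchange of limit and expectation, handled by Fatou once the uniform-in-$n$ Gronwall bound is available. By contrast, the algebraic heart of the argument, namely the cancellation $(\theta-\sigma)\,|\a|_r\leq 0$ furnished by (c), is elementary; essentially all of the analytic care is concentrated in the localization.
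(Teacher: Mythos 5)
Your strategy is sound and, at bottom, the same as the paper's: both proofs rest on the fact that $\cM_s:=\frac{U(X_s)}{U(X_0)}\exp\bigl\{-\int_0^s \frac{LU}{U}(X_u)\,du\bigr\}$ is a nonnegative local martingale with expectation at most one, and both extract the exponential moment by writing $U(X_t)e^{\sigma\int_0^t|\alpha|_r(s,X_s)ds}=U(X_0)\,\cM_t\,\exp\bigl\{\int_0^t(\frac{LU}{U}+\sigma|\alpha|_r)(X_s)ds\bigr\}\le U(X_0)\,\cM_t\,e^{Ct}$, the last inequality being exactly hypothesis (c). The difference lies in how the bound $\bbE_x[\cM_t]\le 1$ is obtained. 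The paper (Lemma \ref{tik_tok_ext} via Lemma \ref{mango25X}) identifies $\cM_t$ as the Radon--Nikodym derivative of the law of the tilted jump process with kernel $r(y,dz)U(z)/U(y)$ restricted to non-exploding paths; since that law has total mass at most one, the expectation bound is immediate and no localization is needed. You instead run the semimartingale decomposition of $W_s=U(X_s)e^{\theta A_s}$ and try to close the estimate with Gronwall.

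That last step is where your argument has a genuine gap. Gronwall's lemma requires the function $s\mapsto\bbE_\nu[W_{s\wedge T_n}]$ to be finite (indeed locally bounded) \emph{before} the inequality can be iterated, and none of the stopping times you propose delivers this. If $T_n$ is the $n$-th jump time, or the first time $U(X_s)>n$, the stopped variable $U(X_{s\wedge T_n})$ is evaluated at a post-jump state, where $U$ can be arbitrarily large; its conditional expectation $U_r(X_{T_n-})/\hat r(X_{T_n-})$ is finite pointwise by (b) but need not be integrable under $\bbP_\nu$, and $e^{\theta A_{s\wedge T_n}}$ is likewise unbounded (only a.s.\ finite, since (a) and (c) give $\sigma|\alpha|_r\le C+\hat r$ pointwise). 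So the quantity fed into Gronwall is not known to be finite, and the argument is circular as stated. The gap is reparable without changing your decomposition: your own drift computation shows that $e^{-Cs}W_s$ equals a local martingale plus a non-increasing finite-variation part vanishing at $s=0$, i.e.\ it is a nonnegative local supermartingale; any such process is a true supermartingale (stop along a localizing sequence of the martingale part and apply Fatou), which yields $\bbE_\nu[W_t]\le e^{Ct}\nu[U]$ directly, with no Gronwall and no need to control $\bbE_\nu[W_{s\wedge T_n}]$. Equivalently, factor out $\cM_t$ as above and invoke the paper's Lemma \ref{mango25X}.
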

Note  that, if   $U(x) \to +\infty$ when $\hat r(x) \to +\infty$, then  Item (c) in Lemma \ref{tik_tok} is a reinforced Lyapunov condition (compare with \eqref{grano}).

This criterion is a special case of a more general (and more technical) criterion presented  in Lemma \ref{tik_tok_ext}  in Section \ref{dim_lemma_tik_tok},  
inspired by Lyapunov functions and the arguments in \cite[Section~3]{BFG1}.  See \cite[Condition 2.2]{BFG1} and \cite[p.~392]{DV3} for related conditions
in the context of large deviations. We point out that, while  Lemma \ref{tik_tok} gives   sufficient conditions for Condition $C[\nu,t]$ to hold,  in some cases one can directly  and more efficiently verify Condition $C[\nu,t]$ using Definition \ref{def_papaya}. To this aim, see the example in Section 
\ref{nascite}.

\smallskip 
The next result states that the exponential moment condition $C[\nu ,t ]$ implies finiteness of small exponential moments for the sum of the values of $\alpha$ over the jumps of the unperturbed process.

 \begin{Lemma}\label{ananas78X}
Given  $\alpha : [0,t] \times \cX \times \cX \to \bbR$ measurable and bounded, suppose that  $\a$ satisfies Condition $C[\nu,t]$ with parameter $\theta>0$.  
Then for $\g:= 4^{-1} \min\{ \theta, \|\a\|^{-1}_\infty \} $ it holds
\be\label{kinney2}
\bbE_\nu \Big[ \exp\Big\{\g  \sum_{ \substack{s\in (0,t]: \\  X_{s-} \neq X_s} }
  |\a (s, X_{s-}, X_s) | \Big \}  \Big]<+\infty\,.
\en
\end{Lemma}
The above lemma in proved in Section \ref{sec:preliminary}.  We remark that the condition $\a$ bounded is necessary: as a counterexample one can take the unperturbed process $(X_s)_{s\in [0,t]}$ to be a Poisson process of rate $1$ (with $\nu=\d_0$), and pick $\a (\cdot ,x,y) := x$. Then,   Condition $C[\nu, t]$ is satisfied while \eqref{kinney2} is violated for all $\gamma >0$. Indeed, in this case $\int_0^t |\a|_r (s,X_s) ds = \int _0 ^t X_s ds \leq t X_t$, thus allowing to check Condition $C[\nu,t]$. On the other hand,
 the sum in \eqref{kinney2} equals $ (X_t^2- X_t)/2\geq (X_t^2-1)/4$ and $\bbE_\nu[e^{(\g/4) X_t^2}]$ diverges for all $\g>0$.

\subsection{Non--explosion of the perturbed process}
We recall that $\tau_\infty ^\l $ denotes the explosion time of the perturbed process $X^\l$, given by the supremum of the jump times. We also  recall that we have assumed that the unperturbed process with initial distribution $\nu$ a.s.\ does not to explode.

 As already mentioned, our linear response results will be derived under the assumption that $g$ satisfies condition $C[\nu,t]$. In fact, this condition automatically implies that the perturbed process does not explode, and hence we do not need to assume non-explosion of the perturbed process separately. Of course, if one is just interested in the non-explosion of the perturbed process, one can more efficiently use the criteria developed e.g.\ in \cite{CZ}.

Recall that $g$, defined in \eqref{kolpakovZZZ}, is measurable and bounded. 
\begin{Theorem}\label{th:explosionX}
Suppose that $g$ satisfies Condition $C[\nu,t]$ with parameter $\theta>0$.
Then  for all  $\lambda\leq 8^{-1} \min\{  \theta  ,\; \|g\|^{-1}_\infty\}$,
the perturbed process $X^\l$ does not explode in $[0,t]$  $\bbP_\nu$--a.s., i.e.~$\bbP_\nu (\tau^\l_\infty >t ) =1$. 
\end{Theorem}
The above theorem is proved in Section \ref{sec:explosion} using stochastic calculus techniques inspired by \cite{PR} (see Lemma 3.1 therein).

\begin{Remark}[Instability of non-explosion under small perturbations]  \label{rem_instabile}
At this point the reader may wonder whether the fact that $g$ is assumed to be bounded, by itself implies that if the unperturbed process does not explode then the perturbed process does not either, at least for $\lambda$ small enough. This turns out not to be the case: see Section \ref{contro} for a counterexample.
\end{Remark}

%
%

%
%

\smallskip




%
%
%
%
%

\section{Linear response of Markov jump processes}\label{sec_response}
We start by fixing some notation. We denote a path $(\xi_s)_{s\in [0,t]}$ simply by $\xi_{[0,t]}$. $D([0,t],\cX)$ is   the Skohorod space of c\`adl\`ag paths from $[0,t]$ to $\cX$, while $D_f ([0,t],\cX)$ is the subset of  $D([0,t],\cX)$ 
given by the paths  with a finite number of jumps. For any $\xi_{[0,t]} \in D_f ([0,t],\cX)$, we abbreviate 
	\begin{equation}\label{shorthand}
	 \sum_{s\in(0,t]} \a(s, \xi_{s-}, \xi_s)  := 
	\sum_{s\in(0,t]\,:\, \xi_{s-}\not =\xi_s}  \a(s, \xi_{s-}, \xi_s) 
	\end{equation}
throughout this note.

Below we will assume that $g$ satisfies Condition $C[\nu,t]$ and we will  take $\l$ small. As a consequence, by Theorem \ref{th:explosionX},   the perturbed  Markov jump process  does not explode $\bbP_\nu$--a.s. in the time interval $[0,t]$. 
Due to non explosion (recall our main assumption at the beginning of Section \ref{sec_kalush}),
 almost surely  the paths $X_{[0,t]}$ and $X^\l_{[0,t]}$ belong to the set $D_f ([0,t],\cX)$.

As in the trajectory-based approach to linear response (cf. \cite{BM,M1}), the starting point to analyze the response of the perturbed system is the following well-known Girsanov-type expression, which can be easily verified:
for any measurable function  $F: D([0,t],\cX)\to \bbR$, bounded or non-negative, and any initial distribution  $\nu$ it holds
\be\label{ugo}
\bbE_{\nu}\Big[ F(X^\l_{[0,t]} )  \Big]=\bbE_{\nu}  \Big[ F(X_{[0,t]}) 
e^{- \cA_\l ( X_{[0,t]} )}\Big]
\en
where the \emph{action} $\cA_\l :D_f ([0,t],\cX)\to \bbR$ is defined as (see \eqref{7guitar}, \eqref{kolpakovZZZ} and  \eqref{7guitarZZZ})
\begin{equation}\label{ughetto}
\begin{split}
\cA_\l\bigl( \xi_{[0,t]}\bigr) :&=
  \int_0^t \bigl[ \hat{r}^\l_s(\xi_s)- \hat r ( \xi_s)\bigr] ds -\l
  \sum _{s \in (0,t] } g(s,\xi_{s-} , \xi_s)
\\
&= \int _0^t ds \int_\cX r(\xi_s,dy) \bigl(e^{\l g(s, \xi_s,y)}-1\bigr)  - \l 
 \sum _{s \in (0,t]} g(s, \xi_{s-}, \xi_s) \,.
 \end{split}
\end{equation}

The next result, proved in Section \ref{sec:derivoX}, is the starting point of our linear response analysis. 

\begin{Proposition}\label{derivoX}  
Suppose that $g$ satisfies Condition $C[\nu,t]$.
Then for any measurable function 
 $F: D_f([0,t],\cX)\to \bbR$    such that $F \bigl( X_{[0,t] } \bigr)\in L^p(\bbP_\nu)$ for some $p\in (1,+\infty]$, the map 
   $ \l \mapsto  \bbE_\nu \bigl[ F \bigl( X^\l_{ [0,t] } \bigr) \bigr] $ 
 is differentiable at $\l=0$. Moreover,  it holds  
  \be\label{titans2X}
\partial_{\l=0} \bbE_\nu \bigl[ F \bigl(  X^\l _{ [0,t] } \bigr) \bigr] = \bbE_\nu\bigl[F  \bigl( X_{ [0,t] } \bigr) 
G_t   \bigl( X_{ [0,t] } \bigr)  \bigr]
\en
where the map $ G_t: D_f( [0,t]; \cX) \to \bbR$ is defined by
\begin{equation}\label{ginoX} 
G_t \big(\xi_{ [0,t]} \big) :=  \sum_{s\in (0,t]} g(s,  \xi_{s-}, \xi_s) - \int_0^t g_r  (s, \xi_s)  ds  \,\end{equation}
with the shorthand notation introduced in \eqref{shorthand}. 
\end{Proposition}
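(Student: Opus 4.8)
The plan is to differentiate the Girsanov identity \eqref{ugo} at $\l=0$. Writing $\Phi(\l):=\bbE_\nu[F(X_{[0,t]})\,e^{-\cA_\l(X_{[0,t]})}]$, the natural strategy is to show that the $\l$-derivative can be brought inside the expectation and that the pointwise derivative of the integrand at $\l=0$ produces exactly $F\,G_t$. Differentiating the action \eqref{ughetto}, using $\partial_{\l=0}(e^{\l g}-1)=g$ inside the time integral and linearity of the jump sum, gives
\begin{equation*}
\partial_{\l=0}\cA_\l(\xi_{[0,t]}) = \int_0^t g_r(s,\xi_s)\,ds - \sum_{s\in[0,t]} g(s,\xi_{s-},\xi_s) = -G_t(\xi_{[0,t]}),
\end{equation*}
since $\int_\cX g(s,\xi_s,y)r(\xi_s,dy)=g_r(s,\xi_s)$. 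Because $\cA_0\equiv 0$, the chain rule then yields $\partial_{\l=0}\bigl(F e^{-\cA_\l}\bigr)=F\,G_t$ pointwise on $D_f([0,t],\cX)$, which is the candidate integrand on the right-hand side of \eqref{titans2X}.

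The substantive work is justifying the interchange of derivative and expectation, i.e.\ a dominated-convergence or uniform-integrability argument for the difference quotients $\bigl(e^{-\cA_\l}-1\bigr)/\l$ against the weight $F(X_{[0,t]})$. First I would get a pointwise Lipschitz-type bound on $\cA_\l$ in $\l$: since $g$ is bounded, say $\|g\|_\infty\le M$, one has $|e^{\l g}-1|\le \l M e^{\l M}$ on the time-integral part and $|{-\l\sum g}|\le \l M N_t$ on the jump part, where $N_t$ denotes the number of jumps of the path in $[0,t]$. This shows $|\cA_\l|\le \l\,C(M)\,(t+N_t)$ and, more usefully, that the difference quotient $|e^{-\cA_\l}-1|/\l$ is dominated, for $\l\in(0,\l_0]$, by a constant times $e^{C'(t+N_t)}=e^{C't}e^{C'N_t}$. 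The key point is then that $N_t=\sum_{s\in[0,t]}1$ has a finite exponential moment under $\bbP_\nu$: applying Lemma \ref{ananas78X} with the constant-in-the-relevant-sense choice that bounds $\a$ below, or more directly observing that condition $C[\nu,t]$ for $g$ forces a finite exponential moment of $\int_0^t \hat r(X_s)\,ds$ and hence of $N_t$ via the standard comparison of the jump count with its compensator, gives $\bbE_\nu[e^{\g N_t}]<+\infty$ for some $\g>0$. I would shrink $\l_0$ so that $C'<\g$.

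With that exponential moment in hand, the integrand family $F(X_{[0,t]})\bigl(e^{-\cA_\l}-1\bigr)/\l$ is dominated by $|F(X_{[0,t]})|\cdot e^{C't}e^{\g N_t}$; since $F(X_{[0,t]})\in L^p(\bbP_\nu)$ with $p>1$ and $e^{\g N_t}\in L^q$ for its conjugate exponent $q$ (after possibly further reducing $C'$ below $\g/q$, again legitimate by shrinking $\l_0$), Hölder's inequality produces an $L^1(\bbP_\nu)$ dominating function. Dominated convergence then legitimizes the interchange and gives \eqref{titans2X} with $G_t$ as in \eqref{ginoX}. The main obstacle is precisely this integrability bookkeeping: matching the exponential growth rate $C'$ coming from the crude bound on $\cA_\l$ against the decay rate $\g$ available from $C[\nu,t]$ (and, for $p<\infty$, splitting the budget between the Hölder conjugate and the domination), so that a single $\l_0>0$ works; everything else is the routine pointwise differentiation above.
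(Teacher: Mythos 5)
Your overall strategy --- differentiate the Girsanov identity \eqref{ugo} pointwise and justify the interchange by dominating the difference quotients --- is the same as the paper's, and your pointwise computation $\partial_{\l=0}\cA_\l=-G_t$ is correct. The gap is in the domination step. You bound the jump part of $\cA_\l$ by $\l\|g\|_\infty N_t$ (with $N_t$ the number of jumps in $[0,t]$) and the time--integral part by a constant times $\l\int_0^t\hat r(X_s)\,ds$, and then claim that $N_t$, equivalently $\int_0^t\hat r(X_s)\,ds$, has a finite exponential moment because $g$ satisfies $C[\nu,t]$. This is false: condition $C[\nu,t]$ controls $\int_0^t|g|_r(s,X_s)\,ds$ with $|g|_r(s,x)=\int_\cX|g(s,x,y)|\,r(x,dy)$, not $\hat r(x)=\int_\cX r(x,dy)$. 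If $g$ vanishes outside finitely many transitions (a case the paper explicitly notes as trivially satisfying $C[\nu,t]$), the condition gives no information whatsoever on $\hat r$, and a non-explosive unperturbed process with unbounded rates can have $\bbE_\nu[e^{\g N_t}]=+\infty$ for every $\g>0$. Applying Lemma \ref{ananas78X} with $\a\equiv 1$ does not rescue this, since its hypothesis \eqref{kinney1} would then be precisely the unavailable exponential moment of $\int_0^t\hat r(X_s)\,ds$. So your dominating function need not be integrable, and the dominated convergence argument breaks down.

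The fix --- and what the paper does --- is to never discard $g$ from the bounds: use $|e^{\l g}-1-\l g|\le c\l^2 g^2\le c\l^2\|g\|_\infty|g|$ inside the time integral to get $|{-\cA_\l}-\l G_t|\le c\|g\|_\infty\l^2\int_0^t|g|_r(s,X_s)\,ds$, and control $G_t$ itself through the two quantities $\int_0^t|g|_r(s,X_s)\,ds$ (small exponential moments from $C[\nu,t]$) and $\sum_{s\in[0,t]}|g(s,X_{s-},X_s)|$ (small exponential moments from Lemma \ref{ananas78X} applied to $\a=g$). With these two path functionals replacing $t+N_t$ in your dominating function, the H\"older bookkeeping you describe goes through essentially as you outline, and in fact yields the quantitative remainder bound $O(\l^2)$ that the paper records.
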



The above statement should be understood to include that all the expectations appearing are well defined and finite under the stated assumptions. 
Although the time $t$ is fixed once and for all and omitted from the notation, for later use we have made explicit the dependence on $t$ of $G_t$. We also point out that  one could give  a quantitative bound on the range of values of $\l$ for which the claim in  Proposition \ref{derivoX} holds true by taking more care of the constants in the proof.
\begin{Remark} \label{rem:Gmart} As we will show in Section \ref{sec:SC},  provided g satisfies Condition $C[\nu,t]$,  $G_t(X_{[0,t]})$ is a martingale (it is in fact a purely discontinuous martingale, in the sense of \cite[Def.~4.11]{JS}). As a consequence, the r.h.s.\ of \eqref{titans2X} equals the covariance ${\rm Cov} ( F  \bigl( X_{ [0,t] } \bigr) ,G_t   \bigl( X_{ [0,t] } \bigr)  \bigr)$ with respect to the probability measure $\bbP_\nu$.
\end{Remark}

 \smallskip
 
\subsection{Linear response for observables and additive functionals}\label{sec:types}
We can  give explicit expressions for the r.h.s.\ of \eqref{titans2X} for specific classes of functionals $F$. We are mainly interested in the following three basic cases (by additivity, functionals given by sums of the following ones can be treated as well):
\begin{itemize}
\item[(1)] $F\bigl(  \xi_{  [0,t] } \bigr) =v(\xi_t)$ for some measurable function $v: \cX \to \bbR$; 
\item[(2)] $F\bigl(  \xi_{[0,t] } \bigr) = \displaystyle  \int_0^t  v(s,\xi_s) ds$, with  $v:[0,t]\times \cX\to \bbR$ measurable;  
\item[(3)] $F\bigl(  \xi_{ [0,t] } \bigr) = \displaystyle 
\sum _{s \in (0,t]} \alpha (s,  \xi_{s-}, \xi_s) $ for  $\alpha : [0,t] \times \cX \times \cX \to \bbR$ measurable.
\end{itemize}
To this aim, fix the following terminology.

\begin{Definition}\label{def:Pint}
We say that  a measurable function $\a :[0,t ] \times \cX \times \cX \to \bbR$ is $\bbP_\nu$--integrable if one of the following equivalent bounds is satisfied: 
	 \begin{equation}\label{intA}
	  \bbE_\nu \bigg[ \sum _{s \in (0,t]} |\a  (s,  X_{s-}, X_s)| \bigg] 
	< \infty  , \qquad 
	 \bbE_\nu \bigg[ \int_0^t  |\a|_r (s, X_s) ds \bigg]  < \infty . 
	\end{equation}
\end{Definition}
The equivalence in the above definition comes from the following fact:
\begin{Lemma} \label{giacinto}
Given a measurable function $\a :[0,t ] \times \cX \times \cX \to \bbR$, it holds
 \begin{equation}\label{intA_zero}
	  \bbE_\nu \bigg[ \sum _{s \in (0,t]} |\a  (s,  X_{s-}, X_s)| \bigg] 
	=
	 \bbE_\nu \bigg[ \int_0^t  |\a|_r (s, X_s) ds \bigg]   . 
	\end{equation}
In particular, 
the two bounds in \eqref{intA}  are equivalent. As a consequence, if $\a$ satisfies Condition $C[\nu,t]$, then $\a$ is $\bbP_\nu$--integrable.
\end{Lemma}
The proof of the above lemma is given in Section \ref{sec:SC}. For the next result recall the definition of $G_t$ given in  \eqref{ginoX}.  
  \begin{Theorem}\label{th:JS}
Suppose that $g$ satisfies Condition $C[\nu,t]$.
Then  
 the following holds:
 \begin{itemize}
 \item[(1)] Let  $v: \cX \to \bbR$ be a measurable function such that $v(X_t) \in L^p(\bbP_\nu)$ for some $p\in(1,+\infty]$. Then 
 	\begin{equation}\label{res1}
 	 \partial_{\l=0} \bbE_\nu \bigl[ v ( X^\l_t)\bigr] = 
 	 \bbE_\nu \bigl[ v(X_t ) G_t ( X_{ [0,t] } ) \bigr] . 
 	 \end{equation}
 \item[(2)]  For $v: [0,t]\times \cX \to \bbR$ measurable such that  $\int_0^t \|v(s,X_s) \|_{ L^p(\bbP_\nu )}ds <+\infty $ for some $p\in(1,+\infty]$, it holds 
  	\begin{equation}\label{res2}
 	 \partial_{\l=0} \bbE_\nu \Bigl[ \int_0^t v (s, X^\l_s) ds \Bigr] = 
 	 \int_0^t \bbE_\nu \bigl[ v(s,X_s ) G_s (X_{ [0,s] } ) \bigr] ds  . 
 	 \end{equation}
 \item[(3)] Let  $F : D_f ([0,t] ; \cX ) \to \bbR $  be the additive functional of the form 
 	 	\begin{equation}\label{eq:F}
 	 	 F\left(\xi_{[0,t]} \right) = \sum _{s \in (0,t]} \alpha (s,  \xi_{s-}, \xi_s) \,, 
 	 	 \end{equation}
 	with $\alpha : [0,t] \times \cX \times \cX \to \bbR$ 
measurable and such that 
\begin{equation} \label{aa}
 \sum _{s \in (0,t]} |\a  (s,  X_{s-}, X_s)| \quad 
\mbox{ and }  \quad 
	 \int_0^t  |\a|_r (s, X_s) ds 
	 \end{equation}
 belong to $L^p ( \bbP_\nu )$ 
for some $p \in (1, + \infty ]$. 
For example take $\a $ bounded and such that it satisfies Condition $C[\nu , t ]$. 
Then  it holds 
 	 	\begin{equation}\label{res3}
 	 	\begin{split} 
 	 	 \partial_{\l=0} \bbE_\nu \bigl[ F  \bigl( X^\l_{ [0,t] } \bigr) \bigr] 
 	 	  =  & \int_0^t \bbE_\nu \big[ (\alpha g )_r (s,X_s) \big] ds \\ & +  
 	\int_0^t \bbE_\nu \big[ \alpha_r (s,X_s ) G_s (  X_{ [0,s] } ) \big]  ds , 
 	\end{split} 
 	\end{equation}
 where $\a_r $ and $(\a g )_r$ denote the contraction of the functions $\a , \a g$ with respect to the transition  kernel $r$, as in \eqref{def:contractionX}. 
 \end{itemize}
 \end{Theorem}

 
 \smallskip 

The above statement should be understood to include that all the expectations appearing are well defined and finite under the stated assumptions. 
The proof of Theorem \ref{th:JS}   is given in Section  \ref{sec:JS1}. Stochastic calculus for processes with jumps will be crucial to derive the above Item (3), we collect in Section \ref{sec:SC}  the needed theoretical background. 

\smallskip


\subsection{Linear response at stationarity} \label{sec:stationary}
 A special role is played by invariant distributions. We recall that a distribution $\pi$ on $\cX$ is called \emph{invariant} for the Markov jump process $(X_t)_{t\geq 0}$ if, when starting with initial distribution $\pi$, it holds  $(X_{t+T}) _{t\geq 0} \stackrel{\cL}{=} (X_t)_{t\geq 0}$ for all $T>0$. If there is no explosion, 
a distribution $\pi$ is invariant if and only if we have the following identity between measures on $\cX$:
\be 
\pi (dx) \int _{\cX} r(x, dy)= \int _{\cX} \pi( dy) r(y,dx)\,,
\en
i.e.~ $\pi(dx) \hat r (x)=  \int _{\cX} \pi( dy) r(y,dx)$.
We denote by $(X^*_t)_{t\geq 0}$  the stationary time-reversed process. 
 This is again a non-explosive  Markov jump process with initial distribution $\pi$ and with transition kernel $r^*$ satisfying the detailed balance equation
\be\label{secondino}
\pi(dx)r(x,dy)= \pi(dy) r^*(y,dx)\,.
\en
Note that \eqref{secondino} is an identity between measures on $\cX \times \cX$.  When $(X_t)_{t\geq 0}$ is a Markov chain, writing $r(x,dy)$ as $r(x,y) \d_y$ and $\pi(dx)$ as $\pi(x) \d_x$, we have the explicit well known expression  $r^*(y,x)= \pi(x) r(x,y) / \pi(y)$. For generic Markov jump processes with non atomic measure $r(x,dy)$, the transition kernel  $r^*(y,dx) $ might not be explicit.

Set
\[ g^*(s,x,y) := g(s,y,x)\,, \]
and introduce the function 
\be \label{def:psi}
\psi_s(x):=\int_\cX   g(s,y,x) r^*(x,dy) - \int_\cX g(s,x,y)r(x,dy) 
  =
g^*_{r^*} (s,x) - g_r (s,x)\,.
\en 

\begin{Theorem}\label{cor:JS} Suppose that the unperturbed Markov jump process is stationary with initial distribution $\pi$.   
Then, under the assumptions of Theorem \ref{th:JS} with $\nu$ replaced by $\pi$ and with the same notation for the functionals, we have:
\[ \begin{split} 
 \partial_{\l=0} \bbE_\pi \bigl[ v ( X^\l_t) \bigr] & = 
  \int_0^t ds \, \bbE_\pi \bigl[ v(X_t) \psi_{t-s} (X_{t-s} ) ] =  \int_0^t ds \, \bbE_\pi \bigl[ v(X_s) \psi_{t-s} (X_0 ) ]
\\ 
\partial_{\l=0} \bbE_\pi \Bigl[ \int_0^t v (s, X^\l_s) ds \Bigr] 
& = \int_0^t ds \int_0^s du \, \bbE_\pi \bigl[ v ( s ,X_s) \psi_{s-u} (X_{s-u} ) \bigr] 
 \\
	\partial_{\l=0} \bbE_\pi \bigl[ F \bigl( X^\l_{ [0,t] } \bigr) \bigr]&  = 
	 \int_0^t \bbE_\pi \big[ (\alpha g )_r (s,X_s ) \big] ds  
	+ \int_0^t ds \int_0^s du \, 
	\bbE_\pi \big[ \alpha_r (s, X_s ) \psi_{s-u}(X_{s-u} )  \big] 
	.
	\end{split} 
	\]
If, in particular, the perturbation $g$ is of the form $g(s,x,y) = \tau (s) E(x,y)$ (decoupled case), then with $E^*(x,y) := E(y,x)$ 
	\[ \begin{split} 
	 \partial_{\l=0} \bbE_\pi \bigl[ v ( X^\l_t) \bigr] 
 &  = 
\int_0^t ds \, \tau (t-s) \bbE_\pi \bigl[ v(X_s) \big(  E^*_{r^*}( X_{0}) - E_r( X_{0} ) \big) \bigr]
\\ 
\partial_{\l=0} \bbE_\pi \Bigl[ \int_0^t v (s, X^\l_s) ds \Bigr] 
 & = 
\int_0^t ds \int_0^s du \, \tau (s-u ) \bbE_\pi \bigl[ v (s,X_u) \big( E^*_{r^*} ( X_{0} ) - E_r ( X_{0} ) \big) \bigr] 
\\
	\partial_{\l=0} \bbE_\pi \bigl[ F \bigl(  X^\l_{[0,t] } \bigr) \bigr]&  = 
	 \int_0^t ds \, \tau (s) \bbE_\pi \big[ (\alpha E )_r (s,X_s ) \big] 
	\\ & + \int_0^t ds \int_0^s du \, \tau (s-u) \bbE_\pi \Big[ 
	\alpha_r (s, X_u ) \big( E^*_{r^*} ( X_{0} ) - E_r ( X_{0} ) \big) \Big]
	\,.
	\end{split} 
	\]
\end{Theorem}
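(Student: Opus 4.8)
The plan is to reduce all the stated formulas to a single two-time correlation identity and then read them off by substitution. Observe first that, by Theorem \ref{th:JS}, the three responses are built from expectations of the form $\bbE_\pi[w(X_{t'})\,G_{t'}(X_{[0,t']})]$ with $w$ an observable: in case (1) one takes $w=v$ and $t'=t$; in case (2), $w=v(s,\cdot)$ and $t'=s$, integrated in $s$; in case (3), $w=\a_r(s,\cdot)$ and $t'=s$, integrated in $s$, the leading term $\int_0^t\bbE_\pi[(\a g)_r(s,X_s)]\,ds$ being already in final form. Hence it suffices to establish, for a fixed observable $w$ with $w(X_t)\in L^p(\bbP_\pi)$, the master identity
\[ \bbE_\pi\bigl[w(X_t)\,G_t(X_{[0,t]})\bigr]=\int_0^t ds\,\bbE_\pi\bigl[w(X_s)\,\psi_{t-s}(X_0)\bigr], \]
with $\psi$ as in \eqref{def:psi}; the equivalent form with $\bbE_\pi[w(X_t)\psi_{t-s}(X_{t-s})]$ (which is what feeds cases (2) and (3)) follows from stationarity, since $(X_{t-s},X_t)\stackrel{\cL}{=}(X_0,X_s)$. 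The decoupled formulas then require no extra work: inserting $g(s,x,y)=\tau(s)E(x,y)$ gives $\psi_{t-s}=\tau(t-s)(E^*_{r^*}-E_r)$, and substituting into the general formulas yields the stated expressions.

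To prove the master identity I would exploit the martingale structure of $G$. Introduce the semigroup $P_uf(x):=\bbE_x[f(X_u)]$ and the martingale $N_s:=\bbE_\pi[w(X_t)\mid \cF_s]=(P_{t-s}w)(X_s)$, so that $N_t=w(X_t)$. By the Remark following Proposition \ref{derivoX}, $G$ is a purely discontinuous martingale with $G_0=0$, and since $X$ is a pure-jump process $N$ is purely discontinuous as well; integration by parts then gives
\[ \bbE_\pi\bigl[w(X_t)G_t\bigr]=\bbE_\pi\bigl[N_tG_t\bigr]=\bbE_\pi\bigl[[N,G]_t\bigr]=\bbE_\pi\Bigl[\textstyle\sum_{s\in(0,t]}\Delta N_s\,\Delta G_s\Bigr], \]
where from \eqref{ginoX} one reads $\Delta G_s=g(s,X_{s-},X_s)$ at a jump, while $\Delta N_s=(P_{t-s}w)(X_s)-(P_{t-s}w)(X_{s-})$.

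Next I would apply the compensation (L\'evy system) formula underlying Definition \ref{def:Pint} (see \cite[Theorem~4.8]{DRoy}) to the predictable integrand $h(s,x,y)=\bigl((P_{t-s}w)(y)-(P_{t-s}w)(x)\bigr)g(s,x,y)$, turning the expected jump sum into $\int_0^t ds\,\bbE_\pi\bigl[\int_\cX\bigl((P_{t-s}w)(y)-(P_{t-s}w)(X_s)\bigr)g(s,X_s,y)\,r(X_s,dy)\bigr]$. Using stationarity ($X_s\sim\pi$) together with the detailed balance identity \eqref{secondino} to exchange $x\leftrightarrow y$ in the gain term, the inner integral collapses to $\int_\cX\pi(dx)\,(P_{t-s}w)(x)\bigl(g^*_{r^*}(s,x)-g_r(s,x)\bigr)=\bbE_\pi[(P_{t-s}w)(X_0)\,\psi_s(X_0)]$; the tower property rewrites this as $\bbE_\pi[\psi_s(X_0)\,w(X_{t-s})]$, and the change of variables $s\mapsto t-s$ gives exactly the master identity. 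The main obstacle I anticipate is the rigorous justification of the step $\bbE_\pi[N_tG_t]=\bbE_\pi[[N,G]_t]$ and of the interchange in the compensation formula: one needs the stochastic-integral remainders to be genuine (mean-zero) martingales and the integrand $h$ to be $\bbP_\pi$-integrable. When $w(X_t)\in L^2$ this follows from $\bbE_\pi[[G]_t]=\bbE_\pi[G_t^2]<\infty$ (a consequence of the small exponential moments of $G$ furnished by Lemma \ref{ananas78X}) together with Cauchy--Schwarz; for $p\in(1,2)$ a localization/truncation argument in the terminal value $w(X_t)$ is required, which is the only genuinely delicate point, the detailed-balance and stationarity manipulations being routine.
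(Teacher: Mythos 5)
Your reduction to the single master identity $\bbE_\pi[w(X_t)G_t(X_{[0,t]})]=\int_0^t\bbE_\pi[w(X_s)\psi_{t-s}(X_0)]\,ds$ is correct, your detailed-balance computation of the compensated bracket is right, and the substitutions for cases (1)--(3) and the decoupled case all check out; so the proposal reaches the stated formulas. But your route is genuinely different from the paper's. The paper never introduces the Doob martingale $N_s=(P_{t-s}w)(X_s)$ or the covariation $[N,G]$: it instead time-reverses the path, using the distributional identity $(X_s,G_s(X_{[0,s]}))\stackrel{\cL}{=}(X_0^*,G_s^*(X^*_{[0,s]}))$ (equation \eqref{eqdistr}), conditions on $X_0^*$, and then only needs that the compensated jump sum $\sum_u g^*(t-u,X^*_{u-},X^*_u)-\int_0^\cdot g^*_{r^*}(t-u,X^*_u)\,du$ is a true martingale for the reversed process --- which is exactly of the form \eqref{E:M} and whose integrability is supplied directly by condition $C[\pi,t]$ and Lemma \ref{ananas78X}. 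What your approach buys is a purely forward argument that avoids constructing the reversed path measure and puts all three cases under one bracket computation; what it costs is precisely the point you flag as delicate: to invoke $\bbE_\pi[N_tG_t]=\bbE_\pi[[N,G]_t]$ you must first know that $N$ is a purely discontinuous (finite-variation) martingale of the form \eqref{E:M}, i.e.\ that $P_{t-s}w$ satisfies the backward Kolmogorov equation in a sense strong enough that the drift term cancels, and that the integrand $((P_{t-s}w)(y)-(P_{t-s}w)(x))g(s,x,y)$ is $\bbP_\pi$-integrable. On a general state space with unbounded $\hat r$ and $w(X_t)$ merely in $L^p$, $p>1$, neither is automatic (your $L^2$-plus-truncation sketch would need to control $\bbE_\pi[|(P_{t-s}w)(X_0)|\,|g^*|_{r^*}(s,X_0)]$, for instance), and this is exactly the regularity issue the paper's time-reversal trick is designed to sidestep: after reversal the only martingale one compensates involves $g$ alone, with $v$ factored out at time $0$ and handled by a single H\"older bound. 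So: same destination, different vehicle, and you should either restrict to the setting where your bracket identity is easy (bounded $\hat r$, or $\cX$ finite) or carry out the truncation argument in detail before calling the proof complete.
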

The proof of Theorem \ref{cor:JS}   is provided in Section  \ref{sec:JS2}. Note that the second and third formulas in Theorem \ref{cor:JS}  can be rewritten by replacing $\bbE_\pi [ v ( s ,X_s) \psi_{s-u} (X_{s-u} ) ] $ with $\bbE_\pi  [ v ( s ,X_u) \psi_{s-u} (X_{0} ) ] $ and $\bbE_\pi [ \alpha_r (s, X_s ) \psi_{s-u}(X_{s-u} )  ] $ by $\bbE_\pi [ \alpha_r (s, X_u ) \psi_{s-u}(X_{0} )  ]$ (the equivalence follows from the stationarity of $\pi$).

\begin{Remark} \label{rem:zeromean} 
Note that in the stationary case, covered by Theorem \ref{cor:JS}, the linear response of  all the functionals under consideration can be computed explicitly from the 2-time distributions  of the stationary time-reversed process.
Moreover, we note that the random variable $ \psi_{s-u}(X_{s-u}) = g^*_{r^*} (s-u , X_{s-u} ) - g_r (s-u , X_{s-u} )  $ has $\bbP_\pi$--zero mean, since 
	\[ \begin{split}
	\bbE_\pi \big[ g^*_{r^*} (s-u , X_{s-u} ) \big] & = 
	\bbE_\pi \bigg[ \int_\cX g( s-u , y , X_{s-u} ) r^* (X_{s-u} , dy ) \bigg] 
	\\ & = \int_\cX  \int_\cX g(s-u , y , x ) \pi ( dx ) r^* (x,dy) 
	\\ & = \int_\cX  \int_\cX g(s-u , y , x ) \pi ( dy ) r (y,dx) 
	\\ & = \bbE_\pi \bigg[ \int_\cX g( s-u , X_{s-u},y ) r (X_{s-u} , dy ) \bigg] 
	= \bbE_\pi \big[ g_r (s-u , X_{s-u} ) \big] . 
	\end{split}\]
As a consequence, the $2$--time expectations appearing in the first part of Theorem \ref{cor:JS} are indeed correlations. 
\end{Remark}
 A comparison of our results with the  linear response when starting with the invariant distribution of the perturbed process is provided in Appendix \ref{sec:pollofritto}.

%
%

\section{Linear response of periodically driven Markov jump processes  in the oscillatory steady state}\label{sec_OSS}

In this section, and the next one, we focus on linear response of Markov jump processes in the oscillatory steady state. 
We take $\cX$ finite and we consider the unperturbed Markov jump process  $(X_t)_{t\geq 0} $ on $\cX$ with transition rates $r(x,y)$ (with our previous notation the transition kernel would be $r(x, dy)=\sum_{z\in \cX} r(x,z) \d_{z}(dy)$).

\begin{Assumption}\label{semplice}
The process  $(X_t)_{t\geq 0}$  is irreducible, i.e.\  it can go from any state $x$ to any $y$ via jumps with positive transition rate.  
\end{Assumption}
The above assumption is equivalent to the fact that zero is a simple eigenvalue of the generator $\cL$. 
We call $\pi$ the  unique invariant distribution of  the unperturbed Markov jump process. 

The perturbed process $(X_t^\l)_{t\geq 0}$ is then the Markov jump process  with transition rates 
	\[ r^\l_s(x,y)= e^{\l g(s,x,y)} r(x,y)\,,\] 
$g(\cdot, x,y)$ being \underline{periodic} on $\bbR$, bounded  and measurable with period $T\in (0,+\infty)$ for any $x,y\in \cX$. As $\cX$ is finite and $g$ is bounded, no explosion takes place. Moreover,  also the discrete--time Markov chain $(X_{n T}^\l)_{n\geq 0}$ is irreducible and therefore it admits a unique invariant distribution $\pi_\l$. Then the law of the perturbed process $(X_t^\l)_{t\geq 0}$ with initial distribution $\pi_\l$ (called \emph{oscillatory steady state}, shortly OSS) is left invariant by time  translations which are multiples of $T$.   It is simple to check that $\pi_\l$ is indeed the unique initial distribution leading to  this type of invariance.   In what follows we aim to investigate the linear response of mean observables and additive functionals on the time interval $[0,t]$ under $\bbP_{\pi_\l}$ (note that now the initial distribution changes with $\lambda$).

 We consider  the complex Hilbert space  $L^2(\pi )$ with  scalar product
\be\label{scalare}
\la f, h\ra = \sum_{x \in \cX} \pi (x) \bar f(x) h(x) 
\en
and write $\|\cdot \|$ for the associated norm.
We define $\cL:L^2(\pi )\to L^2(\pi )$ as the Markov generator of the unperturbed process   $(X_t)_{t\geq 0}$ and write $\cL^*$ for its adjoint operator in $L^2(\pi )$:
\begin{align*} 
 \cL f (x) &=\sum _{y\in \cX } r(x,y) [ f(y)-f (x)]\,,\;\qquad x \in\cX\,,\\
 \cL^* f(x) & = \sum _{y \in \cX} r^*(x,y)[ f(y)-f(x) ]  \,,\qquad x \in\cX\,,
\end{align*}
where $r^*(x,y)= \pi(y) r(y,x)/\pi(x)$. Then $\la f,\cL h\ra = \la \cL^* f, h \ra $ for all $f,h\in L^2(\pi)$. 
The following lemma will be proved in Section \ref{tortelli}. 
\begin{Lemma}\label{cop25}
Zero is a simple eigenvalue of $\cL^*$ with eigenspace given by the  constant functions. All other complex eigenvalues of $\cL^*$ have strictly negative real part. 
\end{Lemma}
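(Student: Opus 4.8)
The plan is to exploit the fact that $\cL^*$ is itself the formal generator of a genuine Markov jump process on $\cX$, namely the one with non-negative rates $r^*(x,y) = \pi(y) r(y,x)/\pi(x)$, and to reduce the statement to the Perron--Frobenius theorem for a single non-negative matrix. First I would record the two structural facts I need. Since $r^*(x,y) > 0$ exactly when $r(y,x) > 0$, reversing every directed edge of the transition graph of the unperturbed process leaves it strongly connected, so the chain with rates $r^*$ is again irreducible by Assumption \ref{semplice}. Moreover, because the off-diagonal entries of $\cL^*$ are the $r^*(x,y) \ge 0$ and each row of $\cL^*$ sums to zero, the constant function $\mathbf 1$ satisfies $\cL^* \mathbf 1 = 0$; hence the constants always lie in the kernel, and it remains only to prove that this eigenvalue is simple and that every other eigenvalue sits strictly to the left of the imaginary axis.

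Next I would pass to the non-negative matrix $M := \cL^* + c\,I$, where $c := \max_{x\in\cX} \hat r(x) < +\infty$ (finite since $\cX$ is finite). By this choice the diagonal entries $M(x,x) = c - \hat r(x)$ are non-negative while the off-diagonal entries are unchanged, so $M \ge 0$ carries the same strongly connected zero pattern as $r^*$ and is therefore irreducible as a non-negative matrix. Each row of $M$ sums to $c$, so $M \mathbf 1 = c\,\mathbf 1$; since the maximal row sum of $M$ equals $c$ we get $\rho(M) \le c$, while the eigenvalue $c$ gives $\rho(M) \ge c$, whence $\rho(M) = c$. The Perron--Frobenius theorem for irreducible non-negative matrices then yields that $c$ is a simple eigenvalue of $M$ (i.e.\ of algebraic multiplicity one). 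Translating back through $\mu \mapsto \mu + c$, this says precisely that $0$ is a simple eigenvalue of $\cL^*$; combined with $\cL^* \mathbf 1 = 0$, its eigenspace is exactly the line of constant functions.

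To locate the remaining spectrum I would use the same spectral radius bound, now for an arbitrary eigenvalue. If $\mu$ is any eigenvalue of $\cL^*$, then $\mu + c$ is an eigenvalue of $M$, hence $|\mu + c| \le \rho(M) = c$; that is, $\operatorname{spec}(\cL^*)$ lies in the closed disk $\{\mu : |\mu + c| \le c\}$, which is tangent to the imaginary axis at the origin. Writing $\mu = a + i b$, the inequality $(a+c)^2 + b^2 \le c^2$ becomes $a^2 + 2ac + b^2 \le 0$, so $2ac \le -(a^2+b^2) \le 0$ forces $a \le 0$, and when $a = 0$ it forces $b^2 \le 0$, i.e.\ $b = 0$. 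Thus every eigenvalue has non-positive real part and the only eigenvalue on the imaginary axis is $\mu = 0$; together with the simplicity just established, this is exactly the claim.

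I expect the only genuinely delicate point to be the peripheral spectrum, i.e.\ excluding nonzero eigenvalues with zero real part. Rather than invoking primitivity/aperiodicity of the semigroup (which would also work, via $e^{t\cL^*}$ being a strictly positive stochastic matrix for every $t>0$), the disk-containment argument disposes of these directly from $\rho(M)=c$, so no separate aperiodicity discussion is needed. As a consistency check one may note that $\operatorname{spec}(\cL^*) = \overline{\operatorname{spec}(\cL)}$, because $\cL^*$ is the $L^2(\pi)$-adjoint of $\cL$, and the statement is invariant under complex conjugation; hence the identical conclusion holds for $\cL$ itself.
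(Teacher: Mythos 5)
Your proof is correct, but it takes a genuinely different route from the paper's. The paper handles simplicity of the eigenvalue $0$ the same way you do (irreducibility of the reversed chain), but for the rest of the spectrum it runs a Dirichlet-form (numerical range) argument: writing $f=f_R+if_I$ it shows $\Re\la f,\cL^* f\ra=\la f_R,Sf_R\ra+\la f_I,Sf_I\ra$ with $S=(\cL+\cL^*)/2$, observes that $S$ generates an irreducible jump process reversible w.r.t.\ $\pi$, so that $\la g,-Sg\ra=\tfrac12\sum_{x,y}\pi(x)r_S(x,y)[g(y)-g(x)]^2$ vanishes only on constants, and concludes $\Re(\l)<0$ from $\l\|f\|^2=\la f,\cL^* f\ra$ for any eigenvector $f$ associated to $\l\neq 0$ (necessarily non-constant). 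You instead shift $\cL^*$ by $c\,\bbI$ to get an irreducible non-negative matrix $M$ with constant row sum $c$, obtain algebraic simplicity of $\rho(M)=c$ from Perron--Frobenius, and trap the spectrum in the disk $|\mu+c|\le c$, which meets the imaginary axis only at the origin. Both arguments are sound. Yours is more elementary matrix theory and actually delivers \emph{algebraic} simplicity of $0$ directly, where the paper merely cites the equivalence with irreducibility; the paper's argument is the one that exhibits the spectral gap through the symmetrized Dirichlet form and would survive in settings where Perron--Frobenius is unavailable. One line you should add: writing $M(x,x)=c-\hat r(x)$ with $c=\max_x\hat r(x)$ tacitly uses $\sum_y r^*(x,y)=\hat r(x)$, which holds but only via the stationarity identity $\pi(x)\hat r(x)=\sum_y\pi(y)r(y,x)$; alternatively take $c=\max_x\sum_y r^*(x,y)$ and nothing changes.
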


 We set  \[ L^2_0(\pi):= \{f\in L^2(\pi)\,:\, \pi[f]=0\}\,,\]
where $\pi [f] = \sum_{x} \pi (x) f(x) $. Then 
 $\cL^*$ is an isomorphism if restricted to  $L^2_0(\pi)$, indeed  $\pi [\cL^* f]=0$ by stationarity of $\pi$ (hence $\cL^* f\in L^2_0(\pi)$) and $\cL^*$ restricted to the finite-dimensional space $L^2_0(\pi)$  is injective by  Lemma \ref{cop25}.   In what follows, we use the following notation:
 \be\label{raggio}
 f\in L^2_0(\pi) \; \;\Rightarrow \; \;(\cL^*)^{-1} f := h \text{ where } h \in L^2_0(\pi)\,,\; \cL^* h=f\,.
 \en
Moreover, given $c\in \bbR\setminus\{0\}$,  the operator $(ic + \cL^*): L^2(\pi)\to L^2(\pi)$ is an isomorphism, since it is injective by Lemma \ref{cop25} and $ L^2(\pi)$ is finite dimensional.

We can decompose the space $L^2(\pi)$ as direct sum of the $\cL^*$--invariant subspaces  $ L^2_0(\pi)$ and $\{\text{constant functions}\}$. Furthermore, we can decompose  $ L^2_0(\pi)$ as direct sum of $\cL^*$--invariant subspaces where, in a suitable basis, $\cL^*$ has the canonical  Jordan form. Fixed a dimension $n$, let $A_i$ be the matrix with ones on the $i$--th upper diagonal, and zeros on the other entries (i.e. $(A_i)_{j,k}= \d_{j+i,k}$, thus implying that $A_0=\bbI$). The canonical Jordan form in dimension $n$ is given by  $J_\g:= \g \bbI+A_1$ for some $\g\in \bbC$. We have  $e^{ s J_\g}= e^{s \g } (\bbI+s A_1+ (s^2/2!) A_2+\cdots + (s^{n-1}/(n-1)!) A_{n-1} )$.  Therefore, if $\Re(\g)<0$, all entries of $e^{s J_\g}$ decay exponentially in $s$.  Moreover, since 
for $\g \not =0$   we have $J_\g^{-1}=\g^{-1} \bbI - \g^{-2} A_1+ \g^{-3} A_2+ \cdots + (-1)^{n-1}\g^{-n} A_{n-1}$,  it is simple to check that $\int_0^{+\infty} e ^{s J_\g}ds  =- J_\g^{-1}$ if $\Re(\g)<0$. Since $ic+ J_\g=J_{ic+\g}$, the above formula also implies that $\int_0^{+\infty} e ^{(ic+J_\g)s}ds  =- (ic+J_\g)^{-1}$ if $\Re(\g)<0$.
Writing $\|\cdot\|$ for  the norm in $L^2_0 (\pi)$,
the above observations and Lemma \ref{cop25} imply that  there exists $\k >0$ such that 
\be\label{pandorino89}
\| e ^{s \cL^* } f   \| \leq   e^{- \k s} \|f \|  \qquad \forall  f \in L^2_0 (\pi) 
\en
and that (recall \eqref{raggio})
\be\label{linz}
 (ic +\cL^*)^{-1} f  =- \int _0^{+\infty}  e^{(ic+\cL^*)s } f ds \,, \qquad \forall c\in \bbR\,,\; \forall f\in L^2_0 (\pi)\,.\\
\en
We will frequently use the above formulas in what follows.

 We introduce the transition   matrix  $P_{\l,t}=\bigl( P_{\l,t}(x,y) \bigr)_{x,y\in \cX}$ defined as 
 \[   P_{\l,t} (x,y):= \bbP_x ( X_t^\l=y)\,.
 \]
  When $\l=0$ we simply write $P_t$. Note that, for $t>0$,  the matrix $P_{\l,t}$ has positive entries. Hence, by Perron-Frobenius Theorem, 
   $1$ is a simple eigenvalue of  $P_{\l,t}$ for $t>0$ and  the distribution $\pi_{\l} $ is the only  row vector satisfying  $\pi _{\l} P_{\l,T}= \pi_{\l}$, $  \sum _{x\in \cX}
 \pi_{\l} (x)=1$.

 By Proposition \ref{derivoX}  the matrix $P_{\l,t}$  is differentiable at $\l=0$.  As $1$ is a simple eigenvalue of $P_t$, by standard finite dimensional perturbation theory 
 \cite{Ka} we get that $\pi_\l$ is differentiable at $\l=0$. 
 By setting $\dot \pi:= \partial_{\l=0} \pi_\l$ and $\dot P_{T}:= \partial_{\l=0} P_{\l,T}$ we have 
 \be\label{pianino}
 \dot \pi (P_T-\bbI)= - \pi \dot P_T\,.
 \en
 Define 
 \[a(x) := \dot \pi(x) /\pi(x) \qquad \forall x \in \cX\] and recall from 
 \eqref{def:psi} 
 that  \be\label{def:psi:bis}
\psi_t(x):=\sum _{y \in \cX}   \left(r^*(x,y) g(t,y,x)- r(x,y) g(t,x,y)\right)
  =
g^*_{r^*} (t,x) - g_r (t,x)\,.\en  In what follows we think of $a$ and $\psi_t$ as column vectors. Note that $\psi_t$ is $T$--periodic in time. Moreover, $\psi_t \in L^2_0(\pi)$ for all $t$ by Remark \ref{rem:zeromean}. 
Due to \eqref{pandorino89}  and since $\sup_{t\in \bbR} \|\psi_t \|<+\infty$, we  get for some $C,\k>0$ that 
\be\label{stimetta}
\sup_{u}\| e^{s \cL_*} \psi_u \| \leq C e ^{- \k s}\qquad \forall s\geq 0
\,.
\en
 In particular,  the integral $ \int _0^\infty ds  \,e^{s \cL^*}\psi_{t-s}$ is well defined for any $t\in \bbR$. The linear response of $\pi_\l$ is described by the following result, proved in Section \ref{tortelli}:
 \begin{Lemma}\label{apogeo} We have $a= \int _0^\infty ds  \,e^{s \cL^*}\psi_{-s}$.
\end{Lemma}
Up to now we have focused on the linear response of the marginal $\pi_\l$  at time zero  of the OSS, but there is nothing special about time zero.  In particular, writing $\pi_{\l,t}$ for the marginal at time $t$  of the OSS (i.e. $\pi_{\l,t}(x):= \bbP _{\pi_\l}(X_t =x)$), Lemma \ref{apogeo} implies the following \rosso{(we omit the proof since immediate)}:
\begin{Corollary}\label{superapogeo}
 Defining the column vector $a_t$ as $a_t(x):= \frac{\partial_{\l=0} \pi_{\l,t} (x)}{\pi(x)} $ for $x\in \cX$, we have    $a_t= \int _0^\infty ds  \,e^{s \cL^*}\psi_{t-s}$.
\end{Corollary}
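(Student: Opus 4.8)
The plan is to deduce the result from Lemma \ref{apogeo} by translating the time origin of the perturbation, exploiting the $T$--periodicity of $g$ together with the invariance of the OSS under time translations by multiples of $T$. The guiding heuristic is exactly the remark preceding the statement: there is nothing special about time zero, so the marginal at time $t$ of the OSS should be the time--zero OSS marginal of an appropriately time--shifted problem. Concretely, fix $t$ and introduce the shifted perturbation $\widetilde g(s,x,y):= g(t+s,x,y)$. Since $g(\cdot,x,y)$ is bounded, measurable and $T$--periodic, so is $\widetilde g(\cdot,x,y)$; moreover the unperturbed rates $r(x,y)$, and hence $\cL^*$, $\pi$ and $r^*$, are untouched by this shift. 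Let $\widetilde X^\l$ be the Markov jump process with time--dependent rates $e^{\l \widetilde g(s,x,y)}r(x,y)$, and let $\widetilde\pi_\l$ be the corresponding OSS initial distribution, to which Lemma \ref{apogeo} applies verbatim.

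First I would verify that $\widetilde\pi_\l=\pi_{\l,t}$. By the Markov property, $(\widetilde X^\l_s)_{s\geq 0}$ started from $\pi_{\l,t}$ has the same law as the shifted segment $(X^\l_{t+s})_{s\geq 0}$ of the OSS, because the rates of $\widetilde X^\l$ at time $s$ coincide with those of $X^\l$ at time $t+s$ and $X^\l_t\sim \pi_{\l,t}$. The OSS path law is $T$--periodic, i.e. $(X^\l_{s+T})_{s\geq 0}\stackrel{\cL}{=}(X^\l_s)_{s\geq 0}$; reindexing by $t$ and combining with the $T$--periodicity of the rates $\widetilde r^\l$ yields $(\widetilde X^\l_{s+T})_{s\geq 0}\stackrel{\cL}{=}(\widetilde X^\l_s)_{s\geq 0}$, so $\pi_{\l,t}$ is an OSS initial distribution for $\widetilde X^\l$ (for $t\notin[0,T)$ one first extends the OSS to a stationary--periodic process on all of $\bbR$). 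By uniqueness of the OSS initial distribution, established via Perron--Frobenius for the irreducible matrix $\widetilde P_{\l,T}$ exactly as for $\pi_\l$, we conclude $\widetilde\pi_\l=\pi_{\l,t}$.

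It then remains to match the two sides of the formula. On one hand, differentiating $\widetilde\pi_\l=\pi_{\l,t}$ at $\l=0$ and dividing by $\pi$ gives $\widetilde a=a_t$, where $\widetilde a(x):=\partial_{\l=0}\widetilde\pi_\l(x)/\pi(x)$; in particular the differentiability of $\pi_{\l,t}$ at $\l=0$ is inherited from Lemma \ref{apogeo} and needs no separate argument. On the other hand, a short computation of the contractions shows that the function $\widetilde\psi$ built from $\widetilde g$ satisfies $\widetilde\psi_s=\psi_{t+s}$, since $\widetilde g^*_{r^*}(s,x)=g^*_{r^*}(t+s,x)$ and $\widetilde g_r(s,x)=g_r(t+s,x)$. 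Applying Lemma \ref{apogeo} to $\widetilde g$, with the unchanged operator $\cL^*$, then gives
\[
a_t=\widetilde a=\int_0^\infty ds\,e^{s\cL^*}\widetilde\psi_{-s}=\int_0^\infty ds\,e^{s\cL^*}\psi_{t-s},
\]
which is the claim.

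I expect the only genuinely delicate point to be the rigorous justification that $\pi_{\l,t}$ is indeed the unique OSS initial distribution of the shifted process, that is, the careful interplay between the $T$--periodicity of the OSS path measure and that of the driving rates $\widetilde r^\l$. Once this identification is secured, everything else is bookkeeping: the differentiability and the integral representation are transferred from Lemma \ref{apogeo} to the shifted perturbation, and the elementary identity $\widetilde\psi_s=\psi_{t+s}$ converts the time--zero formula into the time--$t$ formula. A purely computational alternative, differentiating $\pi_{\l,t}=\pi_\l P_{\l,t}$ directly and evaluating $\dot\pi\,P_t+\pi\,\dot P_t$, would avoid the time--shift but require re-deriving the integral kernel by hand, so I would favor the reduction above.
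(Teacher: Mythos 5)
Your proof is correct and formalizes exactly the argument the paper leaves implicit: the paper offers no separate proof of the Corollary, merely asserting that Lemma \ref{apogeo} implies it because ``there is nothing special in time zero,'' which is precisely your time-shift reduction $\widetilde g(s,\cdot,\cdot)=g(t+s,\cdot,\cdot)$, $\widetilde\pi_\l=\pi_{\l,t}$, $\widetilde\psi_s=\psi_{t+s}$. The identification of $\pi_{\l,t}$ as the unique OSS initial distribution of the shifted process via Perron--Frobenius is the right way to make this rigorous, and the rest is bookkeeping as you say.
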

By combining Theorem \ref{cor:JS}  with 
the above result, we get the linear response in the OSS  for the same functionals of Theorem \ref{cor:JS}:
\begin{Theorem}\label{alpha_omega} Consider the OSS  of the perturbed dynamics.\begin{itemize}
\item[(1)]
For  $v:\cX\to \bbR$ it  holds 
\be\label{kiriku1}
\begin{split} 
\partial_{\l=0} \bbE_{\pi_\l}  [v(X^\l_t ) ]
& = \int _0^\infty ds  \, \la e^{s \cL} v, \psi_{t-s}\ra 
 = \int_0^\infty ds \, \bbE_\pi [ v(X_s) \psi_{t-s} (X_0) ]  
\,.
\end{split} 
\en
\item[(2)]
For $v: [0,t]\times \cX \to \bbR$ measurable such that  $\int_0^t |v(s,x ) |ds <+\infty $ for all $x\in \cX$, it holds 
\be\label{kiriku2}
\begin{split} 
\partial_{\l=0} \bbE_{\pi_\l}  \Big[\int_0^t v(s, X^\l_s)ds \Big]
& = \int_0^t   du  \int _0^\infty ds  \, \la  e^{s \cL} v (u , \cdot ) , \psi_{u-s}\ra
\\ & =  \int_0^t   du  \int _0^\infty ds  \, \bbE_\pi [ v(u, X_s) \psi_{u-s} (X_0) ] 
\,. \end{split} 
\en
\item[(3)]  For  $F : D_f ([0,t] ; \cX ) \to \bbR $ additive functional of the form  \eqref{eq:F}, i.e.\
 	 $F\left(\xi_{[0,t]} \right) = \sum _{s \in (0,t]} \alpha (s,  \xi_{s-}, \xi_s)$, 
  	with $\alpha : [0,t] \times \cX \times \cX \to \bbR$ measurable and such that $\int_0^t |\a|_r (s,x)ds<+\infty$ for all $x\in \cX$. Then it holds
\be\label{kiriku3}
\begin{split}
	\partial_{\l=0} \bbE_{\pi_\l}  \Big [\sum_{s\in(0,t]}  \a(s, X^\l_{s-}, X^\l_s ) \Big ]  
	 = &  \int_0^t \bbE_\pi \big[ (\alpha g )_r (s,X_s ) \big] ds 
	\\  & +   \int_0^t ds \int _0^\infty du \, 
\bbE_\pi \Big[ \alpha_r (s, X_u ) \psi_{s-u}( X_0 ) \Big ] . 
\end{split}
\en	
\end{itemize}
\end{Theorem}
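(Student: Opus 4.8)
The plan is to take advantage of the fact that in the OSS the initial law $\pi_\l$ itself depends on $\l$, so that the response carries two contributions. Since $\cX$ is finite, we may expand $\bbE_{\pi_\l}[F(X^\l_{[0,t]})]=\sum_{x\in\cX}\pi_\l(x)\,\bbE_x[F(X^\l_{[0,t]})]$ and differentiate the finite sum term by term at $\l=0$, obtaining
\[
\partial_{\l=0}\bbE_{\pi_\l}\bigl[F(X^\l_{[0,t]})\bigr]
=\underbrace{\sum_{x\in\cX}\dot\pi(x)\,\bbE_x\bigl[F(X_{[0,t]})\bigr]}_{\text{static}}
+\underbrace{\partial_{\l=0}\bbE_\pi\bigl[F(X^\l_{[0,t]})\bigr]}_{\text{dynamic}}\,.
\]
Here $\dot\pi=\partial_{\l=0}\pi_\l$ exists by the finite-dimensional perturbation theory recalled before Lemma \ref{apogeo}, and $\partial_{\l=0}\bbE_x[F(X^\l)]$ exists by Proposition \ref{derivoX} (condition $C[\delta_x,t]$ being automatic, as $\cX$ is finite and $g$ bounded). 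Writing $a=\dot\pi/\pi$ as in Lemma \ref{apogeo}, the static term equals $\bbE_\pi[a(X_0)\,F(X_{[0,t]})]$, while the dynamic term is precisely the quantity computed in Theorem \ref{cor:JS}.

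The dynamic term is therefore read off directly from Theorem \ref{cor:JS}. Using stationarity to shift time and the identity $\bbE_\pi[h(X_0)f(X_s)]=\la h,e^{s\cL}f\ra=\la e^{s\cL}f,h\ra$ (the first equality since $(e^{s\cL}f)(x)=\bbE_x[f(X_s)]$, the second by reality and $\cL^*$ being the $L^2(\pi)$-adjoint of $\cL$), each of the two-time correlations produced by Theorem \ref{cor:JS} becomes an inner product of the form $\la e^{u\cL}(\cdot),\psi_{(\cdot)}\ra$ in which the ``lag'' $u$ ranges only over the bounded interval $[0,s]$ (respectively over $[0,t]$ in case (1)).

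The crucial step is the static term. I first reduce $\bbE_x[F(X_{[0,t]})]$ to a semigroup action: for cases (1) and (2) this is immediate, while for case (3), where $F$ is a sum over jumps, I use the compensation identity underlying \eqref{intA}, namely $\bbE_x\bigl[\sum_{s\in(0,t]}\alpha(s,X_{s-},X_s)\bigr]=\int_0^t (e^{s\cL}\alpha_r(s,\cdot))(x)\,ds$. Expressing the static term as an inner product against $a$ and inserting $a=\int_0^\infty du\,e^{u\cL^*}\psi_{-u}$ from Lemma \ref{apogeo}, the adjoint relation together with the semigroup property turns it into an integral of $\la e^{(\cdot+u)\cL}(\cdot),\psi_{-u}\ra$ over $u\in[0,\infty)$. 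A change of variables absorbing $u$ into the time-lag already present in the dynamic term (for instance $w=s+u$, so that $\psi_{-u}=\psi_{s-w}$) identifies the static contribution with the complementary range of the lag integral, namely the part lying beyond the dynamic interval. Adding the two contributions then extends the lag integration to all of $[0,\infty)$, which is exactly the form of \eqref{kiriku1}--\eqref{kiriku3}.

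The algebra is routine; the work lies in the justifications. One must check that the product-rule splitting is legitimate (differentiability of $\pi_\l$ and of $\l\mapsto\bbE_x[F(X^\l)]$, both available as above), that all infinite $u$-integrals converge, and that Fubini applies to interchange the integral defining $a$ with the remaining time integrals and expectations. Convergence and the interchange both follow from the uniform exponential decay \eqref{stimetta}: since $\psi_u\in L^2_0(\pi)$ for every $u$ by Remark \ref{rem:zeromean} and $e^{s\cL^*}$ contracts $L^2_0(\pi)$ as in \eqref{pandorino89}, the factor $e^{(\cdot+u)\cL^*}\psi_{-u}$ decays exponentially in $u$. The only genuinely case-dependent manipulation is the compensation reduction of the jump functional in case (3); everything else is the uniform ``range-extension'' mechanism whereby the response of the stationary initial law supplies precisely the missing infinite past.
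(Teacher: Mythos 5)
Your proof is correct, and it reaches \eqref{kiriku1}--\eqref{kiriku3} by a route that is organized differently from the paper's, although it rests on the same two pillars: Lemma \ref{apogeo} for $\dot\pi$ and Theorem \ref{cor:JS} for the stationary finite-horizon response. You split the response at time zero into a static term $\bbE_\pi[a(X_0)F(X_{[0,t]})]$ and a dynamic term $\partial_{\l=0}\bbE_\pi[F(X^\l_{[0,t]})]$, read the latter off Theorem \ref{cor:JS}, and show via the substitution $w=s+u$ that the static term supplies exactly the lag range $[s,\infty)$ missing from the dynamic term's $[0,s]$; I checked this gluing in all three cases and it closes correctly. The paper instead packages both contributions into the derivative $a_t$ of the time-$t$ marginal of the OSS (Corollary \ref{superapogeo}): case (1) is then the one-line identity $\partial_{\l=0}\bbE_{\pi_\l}[v(X^\l_t)]=\pi[a_t v]$; case (2) reduces to case (1) after a derivative/integral exchange, in the course of which the paper performs essentially your static/dynamic split (cf.\ \eqref{exchange} and \eqref{mandi}); and case (3) is handled by compensating the \emph{perturbed} process and differentiating the product $\pi_{\l,s}(x)\,r^\l_s(x,y)$, which produces $a_s(x)+g(s,x,y)$ directly and never invokes item (3) of Theorem \ref{cor:JS}. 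So the trade-off is: your argument uses the full strength of Theorem \ref{cor:JS}(3) but only needs $a=a_0$ and a single uniform ``range-extension'' mechanism, which makes the physical reading (initial law as memory of the infinite past) transparent; the paper needs $a_s$ for all $s$ but less of Theorem \ref{cor:JS}, and for case (3) its compensator computation is somewhat shorter. The analytic justifications you flag --- differentiability of both factors in the product rule (from finite-dimensional perturbation theory and Proposition \ref{derivoX} with $\nu=\d_x$, condition $C[\d_x,t]$ being automatic on a finite space), convergence of the infinite lag integrals and the Fubini interchanges (from \eqref{pandorino89} and \eqref{stimetta}), and the compensation identity for the jump functional (from the $\bbP_\pi$-integrability of $\a$ together with $\pi(x)>0$ for all $x$) --- are all available exactly as you indicate, so no gap remains.
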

We refer to Section \ref{tortelli} for the proof of Theorem \ref{alpha_omega}.
\begin{Remark}
By  the  first formula in Theorem \ref{cor:JS}  and  the $T$--periodicity of $\psi_t$,   from \eqref{kiriku1} we get that 
  \be
  \partial_{\l=0} \bbE_{\pi_\l}  [v(X^\l_t ) ]= \lim_{n\to +\infty}  \partial_{\l=0} \bbE_{\pi}  [v(X^\l_{t+nT} ) ]\,.
  \en
  \end{Remark}

Let  $\o$ denote the frequency associated to the period $T$, i.e. $T= 2\pi /\o$. Given a $T$-periodic integrable real function $f$ we write 
\[ c_k( f):=\frac{1}{T} \int_0^{T} e^{-i k \o  t} f(t)dt\,, \qquad k\in \bbZ \,,  
\]
for its Fourier coefficients, thus leading to    $f(t)=\sum_{k\in\bbZ} c_k(f) e^{i k \o t}$.
We also write in  Fourier representation  
\[  
\psi_t (x) : = \sum_{k\in \bbZ} \hat \psi_k(x) e^{ik \o t} \,, \qquad g(t,x,y)= \sum _{k\in \bbZ} \hat g_k(x,y) e^{i k\o  t}
\]
thus leading to 
$ \hat \psi_k(x)= \sum_{y\in \cX}\left (r^*(x,y)  \hat g_k(y,x)-  r(x,y)\hat  g_k(x,y)\right)$.
For the next linear response result, 
recall also the notation \eqref{raggio} and note that $\hat \psi_k \in L^2_0(\pi)$. Indeed, as already observed, $\psi_t \in L^2_0(\pi)$ and therefore the same holds for $\hat \psi_k= \frac{1}{T} \int_0 ^T e^{- i k \o t} \psi_t dt $.

\begin{Theorem}\label{th_kobo}
Given $v:\cX \to \bbR$,  the map $t\mapsto  f_\l (t):= \bbE_{\pi_\l}  [v(X^\l_t ) ]$ is $T$-periodic in time. 
Moreover, for any $k\in \bbZ$ it holds 
\be\label{kobo3}
\begin{split} 
\partial_{\l =0}c_k ( f_\l) & =
  \int _0^\infty ds     \la  e^{s ( \cL + i k \o ) } v,  \hat \psi_k \ra   
  \\ & = \int _0^\infty ds \, e^{-ik\o s } \bbE_{\pi} [ v(X_s) \hat \psi_k (X_0) ] 
  \,. \end{split} 
\en
\end{Theorem}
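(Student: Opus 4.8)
The plan is to reduce the whole statement to Theorem \ref{alpha_omega}(1) and then isolate the $k$-th Fourier mode by orthogonality, so that the only real work is justifying two interchanges of limits.

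First I would establish the $T$-periodicity of $f_\l$, which is asserted in the statement. By construction the OSS law $\bbP_{\pi_\l}$ is invariant under time translations by multiples of $T$, so the one-time marginals satisfy $\bbP_{\pi_\l}(X^\l_{t+T}=y)=\bbP_{\pi_\l}(X^\l_t=y)$ for every $y\in\cX$; multiplying by $v(y)$ and summing over $y$ gives $f_\l(t+T)=f_\l(t)$, so that $c_k(f_\l)$ is well defined. Next, since $\cX$ is finite and all the relevant quantities (matrix exponentials, the Perron eigenvector $\pi_\l$, hence $f_\l(t)$ and its $\l$-derivative) are smooth in $\l$ with bounds uniform over the compact period $[0,T]$, I would interchange $\partial_{\l=0}$ with the averaging integral to obtain
\[
\partial_{\l=0} c_k(f_\l)=\frac1T\int_0^T e^{-ik\o t}\,\partial_{\l=0} f_\l(t)\,dt,
\]
and then substitute the formula of Theorem \ref{alpha_omega}(1), namely $\partial_{\l=0} f_\l(t)=\int_0^\infty \langle e^{s\cL}v,\psi_{t-s}\rangle\,ds$.

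The crucial analytic step is a Fubini exchange on $[0,T]\times[0,\infty)$. This is legitimate because $|\langle e^{s\cL}v,\psi_{t-s}\rangle|=|\langle v,e^{s\cL^*}\psi_{t-s}\rangle|\le \|v\|\,\|e^{s\cL^*}\psi_{t-s}\|\le C\|v\|\,e^{-\k s}$, using the adjoint relation and the decay bound \eqref{stimetta} (which applies since $\psi_u\in L^2_0(\pi)$), so the integrand is absolutely integrable on the product domain. After swapping, I expand $\psi_{t-s}=\sum_{m\in\bbZ}\hat\psi_m e^{im\o(t-s)}$ and use linearity of $\langle\cdot,\cdot\rangle$ in its second argument to write $\langle e^{s\cL}v,\psi_{t-s}\rangle=\sum_m e^{im\o(t-s)}\langle e^{s\cL}v,\hat\psi_m\rangle$; the orthogonality relation $\frac1T\int_0^T e^{i(m-k)\o t}\,dt=\delta_{m,k}$ then collapses the sum to the single term $m=k$, leaving
\[
\partial_{\l=0} c_k(f_\l)=\int_0^\infty e^{-ik\o s}\,\langle e^{s\cL}v,\hat\psi_k\rangle\,ds.
\]

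Finally I would recast this in the two stated forms. For the first, since $e^{s(\cL+ik\o)}=e^{ik\o s}e^{s\cL}$ and $\langle\cdot,\cdot\rangle$ is conjugate-linear in its first slot, one has $\langle e^{s(\cL+ik\o)}v,\hat\psi_k\rangle=e^{-ik\o s}\langle e^{s\cL}v,\hat\psi_k\rangle$, so the integrand equals $\langle e^{s(\cL+ik\o)}v,\hat\psi_k\rangle$. For the second, I use $(e^{s\cL}v)(x)=\bbE_x[v(X_s)]$ together with the stationarity of $\pi$ and the Markov property to identify $\langle e^{s\cL}v,\hat\psi_k\rangle=\sum_x \pi(x)\,\bbE_x[v(X_s)]\,\hat\psi_k(x)=\bbE_\pi[v(X_s)\hat\psi_k(X_0)]$, which yields the probabilistic expression. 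The only genuine subtlety is the justification of the two interchanges—the $\l$-differentiation under the period average and the Fubini swap—and I expect no serious obstacle beyond this bookkeeping: the former is controlled by the finiteness of $\cX$ and the smoothness in $\l$, and the latter by the exponential bound \eqref{stimetta}.
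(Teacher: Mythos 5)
Your argument is correct and follows essentially the same route as the paper: apply the linear response formula of Theorem \ref{alpha_omega}, exchange the $\l$-derivative with the Fourier averaging over the period (the paper does this by invoking \eqref{kiriku2} directly, i.e.\ viewing $c_k(f_\l)$ as a time-integrated observable of type (2)), then swap the two integrals and use the $T$-periodicity of $\psi$ to reduce $\frac1T\int_0^T e^{-ik\o t}\psi_{t-s}\,dt$ to $e^{-ik\o s}\hat\psi_k$. The only cosmetic difference is that the paper obtains this last identity by a direct change of variables rather than by expanding the Fourier series of $\psi$ (which in any case converges only in $L^2$ for merely bounded measurable $g$) and invoking orthogonality.
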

\begin{proof}[ Proof of Theorem \ref{th_kobo}]
By \eqref{kiriku2}  
\be\label{lavatrice}
\partial_{\l =0}c_k ( f_\l) =\frac{1}{T} \int_0^T   dt e^{-i k \o t}  \int _0^\infty ds   \, \la e^{s \cL}  v, \psi_{t-s}\ra . 
\en
As $\psi_t$ is $T$-periodic we have 
$ \frac{1}{T} \int_0^T   dt e^{-i k \o t} \psi_{t-s}= e^{- i k \o s}\hat \psi_k$, which gives the result.
\end{proof}

In the special decoupled case  $g(s,x,y)=\t(s) E(x,y)$  the linear response formulas collected up to now admit a simplified form, we omit the proof since straightforward. 
\begin{Theorem}\label{maldive} Suppose that  $g(s,x,y)=\t(s) E(x,y)$ and let $E^*(x,y):= E(y,x)$. Then, $\psi_t(x)=\t(t)( E^*_{r^*} (x) -E_r (x) )$ and,
in the same setting of Theorems \ref{alpha_omega} and Theorem \ref{th_kobo}, formulas \eqref{kiriku1}, \eqref{kiriku2}, 
\eqref{kiriku3} and \eqref{kobo3} read: 
	\[ \begin{split} 
	\partial_{\l=0} \bbE_{\pi_\l}  [v(X^\l_t ) ]
	& = \int_0^\infty ds \, \tau (t-s) \bbE_\pi [ v(X_s) ( E^*_{r^*} (X_0) - E_r (X_0) ) ] 
	\\ 
	\partial_{\l=0} \bbE_{\pi_\l}  \Big[\int_0^t v(s, X^\l_s)ds \Big]
	& = 
	 \int_0^t   du  \int _0^\infty ds  \, \tau (u-s) \bbE_\pi [ v(u, X_s) ( E^*_{r^*} (X_0) - E_r (X_0) ) ]
	 \\ 
	 \partial_{\l=0} \bbE_{\pi_\l}  \Big [\sum_{s\in (0,t]}  \a(s, X^\l_{s-}, X^\l_s ) \Big ]  
	  & =   \int_0^t \tau (s) \, \bbE_\pi \big[ (\alpha E )_r (s,X_s ) \big] ds 
	\\  & +   \int_0^t ds \int _0^\infty du \, \tau(s-u) 
\bbE_\pi \Big[ \alpha_r (s, X_u ) ( E^*_{r^*} (X_0) - E_r (X_0) ) \Big ] 
	\\ 
	\partial_{\l =0}c_k ( f_\l)  & = \hat \tau_k
	\int_0^\infty e^{-ik\o s } \bbE_\pi \big[ v(X_s)  ( E^*_{r^*} (X_0) - E_r (X_0) ) \big] ds \, . 
	\end{split}\] 
\end{Theorem}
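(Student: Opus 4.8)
The plan is to derive all four displayed identities by substituting the decoupled form $g(s,x,y)=\tau(s)E(x,y)$ into the general formulas \eqref{kiriku1}, \eqref{kiriku2}, \eqref{kiriku3} and \eqref{kobo3} and simplifying; the argument is purely computational, and the single fact that makes everything collapse is that the time factor $\tau(s)$ carries no dependence on the states, so it pulls out of every contraction against the kernels $r$ and $r^*$. Concretely, from \eqref{def:contractionX} one reads $g_r(s,x)=\tau(s)E_r(x)$, and writing $g^*(s,x,y)=g(s,y,x)=\tau(s)E^*(x,y)$ with $E^*(x,y):=E(y,x)$ (the convention already used in Theorem \ref{cor:JS}) one gets $g^*_{r^*}(s,x)=\tau(s)E^*_{r^*}(x)$. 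Substituting these into the definition of $\psi$ yields the key factorization
\[
\psi_s(x)=g^*_{r^*}(s,x)-g_r(s,x)=\tau(s)\big(E^*_{r^*}(x)-E_r(x)\big),
\]
i.e.\ $\psi_s$ factors as a scalar time-function times the fixed state-function $E^*_{r^*}-E_r$.

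First I would treat the observable case: inserting this factorization into the second equality of \eqref{kiriku1} and extracting $\tau(t-s)$ from the expectation gives the first formula at once. The same substitution in \eqref{kiriku2} produces the second. For the jump functional \eqref{kiriku3} one uses in addition that $(\alpha g)_r(s,x)=\tau(s)(\alpha E)_r(s,x)$ — again because $\tau(s)$ is state-independent and factors out of the contraction — and combines it with the factorization of $\psi_{s-u}$ to recover the third formula termwise.

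For the Fourier identity I would pass the factorization through the Fourier decomposition: since $\psi_t(x)=\tau(t)\big(E^*_{r^*}(x)-E_r(x)\big)$ and $\tau(t)=\sum_k\hat\tau_k e^{ik\o t}$, uniqueness of Fourier coefficients gives $\hat\psi_k(x)=\hat\tau_k\big(E^*_{r^*}(x)-E_r(x)\big)$. Plugging this into the second equality of \eqref{kobo3} and pulling the constant $\hat\tau_k$ out of the integral yields the last formula.

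I do not expect any genuine obstacle: the statement is bookkeeping organized around the single factorization identity, which is precisely why the authors call the proof straightforward. The only points deserving a line of care are keeping the convention $E^*(x,y)=E(y,x)$ consistent throughout (matching $g^*(s,x,y)=g(s,y,x)$), and noting that $\hat\psi_k\in L^2_0(\pi)$ — inherited from $\psi_t\in L^2_0(\pi)$ via the discussion around \eqref{stimetta} — so that every object appearing in the four formulas is well defined and the infinite-time integrals converge by \eqref{stimetta}.
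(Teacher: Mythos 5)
Your proof is correct and is precisely the straightforward substitution the paper has in mind when it omits the proof: the factorization $\psi_s=\tau(s)\big(E^*_{r^*}-E_r\big)$, together with $(\alpha g)_r(s,\cdot)=\tau(s)(\alpha E)_r(s,\cdot)$ and $\hat\psi_k=\hat\tau_k\big(E^*_{r^*}-E_r\big)$, is all that is needed to read the four formulas off \eqref{kiriku1}, \eqref{kiriku2}, \eqref{kiriku3} and \eqref{kobo3}. You are also right to use the convention $E^*(x,y)=E(y,x)$ consistently with Theorem \ref{cor:JS}; the statement's ``$E^*(x,y):=E(x,y)$'' is evidently a typo.
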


\section{Complex mobility matrix}\label{sec_CM}
As an example of application of the results in Section \ref{sec_OSS}, we discuss the complex mobility matrix of a random walk on a torus with heterogeneous jump rates.  To this aim, given an integer $N\geq 1$, we consider the torus $ \bbT^d_N:=\bbZ^d/N\bbZ^d$.

The unperturbed Markov jump process $(X_t)_{t\geq 0}$ is given by the random walk on $\bbT^d_N$ jumping between nearest-neighbour points with  jump rates  $r(x,y)>0$ \rosso{($r(x,y):=0$ if $x,y$ are not nearest-neighbours)}. By irreducibility, the random walk admits 
  a unique invariant distribution   $\pi $ on $\bbT^d_N$. Let $r^*(x,y)$ be the time-reversed jump rates, i.e. $r^*(x,y)= \pi(y) r(y,x)/\pi (x)$. A special case is given by the \emph{reversible random walk}  on the torus, for which 
 $  r^*(x,y)=r(x,y)$. For example, if $r(x,y)=r(y,x)$ for all $x,y$,  then  $\pi$ is the uniform distribution and $r^*(x,y)=r(x,y)$.

We introduce a time-oscillatory field along the direction of a fixed  unit vector $v\in \bbR^d$. Given $\l>0$ and $\o\in \bbR\setminus \{0\}$, the perturbed random walk $(X^\l_t)_{t\geq 0}$ has jump rates at time $t$ given by 
  \begin{equation}\label{pert_rates}
r_t^\l(x,y)\rosso{:=} \exp \{ \l \cos(\o t ) (y-x) \cdot v\} \, r(x,y)
  \end{equation}
  \rosso{for $x,y$  nearest-neighbours  ($r_t^\l(x,y):=0$ otherwise)}.
 Above $w\cdot v$ denotes the Euclidean scalar product of the vectors $v,w$.   As before, we write $\pi_\l$ for the initial distribution of the OSS. 
 Note that the perturbation is of decoupled form $g(s,x,y)= \t(s) E(x,y)$ with   $\t(s)= \cos(\o s)$ and $E(x,y)= (y-x) \cdot v$   \rosso{for $x,y$  nearest-neighbours   and $E(x,y)=0$ otherwise}. Setting 
\be\label{kinder_cards} \Psi(x):=- \sum _{e:|e|=1} (  r^*(x,x+e)+ r(x,x+e))  e\in \bbR^d\,,
 \en
 we have (recall that $E^*(x,y):= E(y,x)$)
 \be\label{salazar}  E^*_{r^*} (x) -E_r (x)=\Psi(x) \cdot v\,.
 \en
 Note that
 $  \Psi(x)=-2  \sum _{e:|e|=1} r(x,x+e)  e  $  for the reversible random walk. As an  immediate consequence of Theorem \ref{maldive} we get that, 
   for any function $f:\bbT^d_N \to \bbR$,  
   \be\label{telefono}
   \begin{split}
   \partial _{\l=0}  \bbE_{\pi_\l}  [ f(X_t^\l)]
   & =\int_0 ^\infty \cos ( \o(t-s) ) \la e^{s\cL }f ,\Psi \cdot v\ra ds \\
    &= \Re\Big(  \int_0 ^\infty e^{i  \o t }
    \la f ,e^{- (i \o -\cL ^*) s} ( \Psi\cdot v ) \ra ds   \Big)\\
    & = \Re\Big( e^{i \o t} \la f , (i \o -\cL^*)^{-1}( \Psi\cdot v)  \ra\Big)\,.
     \end{split}
    \en
For the above formula,    recall  \eqref{linz}, that the above integrands decay exponentially fast in $s$  and that $\Re(z)$ denotes the real part of the complex number $z$. 
Calling $(Y_t^\l)_{t\geq 0}$  the random walk obtained by lifting to $\bbZ^d$ the original one $(X_t^\l)_{t\geq 0}$, we get that the  \emph{mean instantaneous 
velocity} in the OSS at time $t$ 
is given by  
\be\label{matteo105}
V_\l (t):=\frac{d}{dt} \bbE_{\pi_\l} \bigl[ Y_t^\l ] = \sum_{e:|e|=1 } \bbE_{\pi_\l}\bigl[ r^\l_t (X^{\l}_t,X^\l_t+e) \bigr] e \,.
\en
In what follows we denote by $e_1, e_2, \dots, e_d$ the canonical basis of $\bbR^d$.
Moreover, we let  $c, \g : \rosso{\bbT^d_N} \to \bbR^d$ be defined as 
\begin{align}
& c(x):=\sum_{j=1}^d \bigl[ r(x,x+e_j)+ r(x, x-e_j) \bigr] e_j\,, \label{cocinella1}\\
& \g(x) := \sum_{j=1}^d \bigl[ r(x,x+e_j)- r(x, x-e_j) \bigr] e_j= \sum_{e:|e|=1} r(x,x+e) e\,. \label{cocinella2}
\end{align}
\begin{Theorem}\label{teo_CM} \rosso{Given $\o\not =0$} it holds 
 \begin{equation}\label{stilton}
\partial_{\l=0} V_\l (t)   = \Re \left( e^{i \o t} \s  (\o) v \right) \,,
\end{equation}
where   the {\bf complex mobility matrix} $\s(\o)=\bigl( \s_{j,k}(\o) \bigr)$  is the $d\times d$ matrix with complex entries  given by
\begin{equation}\label{jabba1}
\begin{split}
\s_{j, k} (\o)& =  \pi [c_j]\d_{j,k}+ \la \g_j, (i \o -\cL^*)^{-1} \Psi_k \ra \\
&=  \pi [c_j]\d_{j,k}+ \int_0^{+\infty} \la \g_j, e^{ -(i \o - \cL^*) s} \Psi_k \ra\,.\end{split}
\end{equation}
 For  the reversible random walk it holds $\Psi(x) = - 2 \g(x)$ and $\cL^*=\cL$, thus implying that
 $\s(\o)$ is symmetric and 
 \begin{equation}\label{jabba2}
\begin{split}
\s_{j, k} (\o)& =  \pi [c_j]\d_{j,k}-2 \la \g_j, (i \o -\cL)^{-1} \g_k \ra \\
&=  \pi [c_j]\d_{j,k}-2 \int_0^{+\infty} \la \g_j, e^{ -(i \o - \cL) s} \g_k \ra\,.\end{split}
\end{equation}
 \end{Theorem}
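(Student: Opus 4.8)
The plan is to differentiate the velocity formula \eqref{matteo105} directly at $\l=0$ and then reorganize the resulting sum over unit lattice vectors into the announced matrix. Since the perturbed rates \eqref{pert_rates} have the product structure $r^\l_t(x,x+e)=e^{\l\cos(\o t)(e\cdot v)}\,r(x,x+e)$, writing $h_e(x):=r(x,x+e)$ I would first record
\[
V_\l(t)=\sum_{e:|e|=1} e^{\l \cos(\o t)(e\cdot v)}\,\bbE_{\pi_\l}\big[h_e(X^\l_t)\big]\,e .
\]
This displays the two sources of $\l$-dependence separately: the explicit prefactor and the OSS expectation. Both are differentiable at $\l=0$ — the prefactor trivially, and $\l\mapsto\bbE_{\pi_\l}[h_e(X^\l_t)]$ by \eqref{telefono} (which in turn rests on Proposition \ref{derivoX} for the semigroup and on finite-state perturbation theory for $\pi_\l$). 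Hence the product rule applies termwise.

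Next I would carry out the differentiation. At $\l=0$ the prefactor equals $1$, and, since $\pi$ is invariant, $\bbE_{\pi}[h_e(X_t)]=\pi[h_e]$; using \eqref{telefono} with $f=h_e$ for the derivative of the expectation, the product rule gives
\[
\partial_{\l=0}V_\l(t)=\cos(\o t)\sum_{e:|e|=1}(e\cdot v)\,\pi[h_e]\,e
+\sum_{e:|e|=1}\Re\Big(e^{i\o t}\,\la h_e,(i\o-\cL^*)^{-1}(\Psi\cdot v)\ra\Big)\,e .
\]
Extracting the $j$-th component and pairing $e=+e_k$ with $e=-e_k$, the identities $h_{e_k}+h_{-e_k}=c_k$ and $h_{e_k}-h_{-e_k}=\g_k$ collapse the first sum to $\cos(\o t)\,\pi[c_j]\,v_j=\Re\big(e^{i\o t}\,\pi[c_j]\,v_j\big)$ and the second to $\Re\big(e^{i\o t}\la \g_j,(i\o-\cL^*)^{-1}(\Psi\cdot v)\ra\big)$. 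Expanding $\Psi\cdot v=\sum_k v_k\Psi_k$ and pulling the common factor $\Re(e^{i\o t}\,\cdot\,)$ out identifies the bracketed coefficient of $v_k$ as $\pi[c_j]\d_{j,k}+\la\g_j,(i\o-\cL^*)^{-1}\Psi_k\ra=\s_{j,k}(\o)$, which is exactly \eqref{stilton} with $\s(\o)$ as in \eqref{jabba1}. The integral representation in \eqref{jabba1} is then obtained by inserting the resolvent identity $(i\o-\cL^*)^{-1}=\int_0^\infty e^{-(i\o-\cL^*)s}\,ds$ valid on $L^2_0(\pi)$, which is legitimate because $\Psi_k\in L^2_0(\pi)$ (Remark \ref{rem:zeromean}) and the integrand decays exponentially by \eqref{pandorino89}.

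Finally, for the reversible walk I would substitute $r^*=r$, hence $\cL^*=\cL$, into the above, and note from \eqref{kinder_cards} that $\Psi(x)=-2\sum_{e:|e|=1}r(x,x+e)e=-2\g(x)$, so that $\Psi_k=-2\g_k$ turns \eqref{jabba1} into \eqref{jabba2}. The symmetry $\s_{j,k}=\s_{k,j}$ then follows because $\cL$ is self-adjoint in $L^2(\pi)$, so $i\o-\cL$ — and therefore its inverse — is symmetric for the real bilinear pairing $\sum_x\pi(x)f(x)h(x)$; as $\g_j,\g_k$ are real this yields $\la\g_j,(i\o-\cL)^{-1}\g_k\ra=\la\g_k,(i\o-\cL)^{-1}\g_j\ra$, while $\pi[c_j]\d_{j,k}$ is already symmetric. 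I do not expect a genuine obstacle here: given \eqref{telefono}, the only real work is the bookkeeping that matches the lattice sums to $c_j$, $\g_j$ and $\Psi_k$, and the one point deserving care is the termwise product rule, which is justified precisely because the finiteness of $\cX$ makes $V_\l(t)$ a finite sum of products of two separately $\l$-differentiable factors.
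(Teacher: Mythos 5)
Your proposal is correct and follows essentially the same route as the paper: it differentiates \eqref{matteo105} by the product rule into the prefactor term (handled by stationarity, giving $\pi[c_j]\d_{j,k}$) and the OSS-expectation term (handled by \eqref{telefono} applied to $\g_j$, giving $\la\g_j,(i\o-\cL^*)^{-1}\Psi_k\ra$), with the same lattice bookkeeping and the same symmetry argument for the reversible case, merely phrased via the real bilinear pairing instead of explicit complex conjugation.
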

The proof of the above theorem is given in Section \ref{sec_dim_teo_CM}.

 \begin{Remark} Given   $d\times d$ complex matrices $A,B$ with 
$ \Re ( e^{i \o t} A v)=\Re ( e^{i \o t} B v)$ for all  $t \geq 0$ and $ v\in \bbR^d$,
 then necessarily $A=B$, since it must be $\cos(\o t) \Re(A-B) v=0$ and $\sin (\o t) \Im (A -B) v=0$ for all  $t \geq 0$ and $ v\in \bbR^d$.
  In particular, the validity of the identity \eqref{stilton}  for all $t,v$ univocally determines $\s(\o)$.
 \end{Remark}
In Section \ref{es_CM} we will compute $\s(\o)$ explicitly in particular cases. 
 When the system is very  heterogenous, $\s(\o)$ cannot be computed explicitly. Formulas \eqref{jabba1} and \eqref{jabba2} in Theorem \ref{teo_CM} are nevertheless  useful for investigating the properties of $\s(\o)$ (cf.~\cite{FM})  and also for proving homogenization  of $\s(\o)$ as $N\to +\infty$   in the case of random unperturbed jump rates (cf.~\cite{FS}).

\subsection{Extension to more general jump rates}
We can introduce and analyze the complex mobility matrix also when the unperturbed process has long jumps. We describe below how to modify the above  discussion in the general case.

We fix a finite set $\cZ\subset \bbZ^d$ such that the canonical projection $\pi:\bbZ^d \to  
\bbT^d_N=\bbZ^d/N \bbZ^d$ is injective when restricted to $\cZ$. Given $x\in \bbT^d$ and $z\in \cZ$ we write $x+z$ for the site $x+\pi(z)$ in  $\bbT^d_N$. The above sum is understood in the additive quotient group $\bbT^d_N=\bbZ^d/N \bbZ^d$ (also before we wrote $x+e$ for $x+\pi(e)$). Since $\pi$ restricted to $\cZ$ is injective, the sites $x+z$ with $z\in \cZ$ are all distinct. 

We assume now that the unperturbed Markov jump process $(X_t)_{t\geq 0}$ is an irreducible Markov chain (random walk) on $\bbT^d_N$ with jump rates $r(x,y)$ and that $r(x,y)=0$ if $y\not\in\{x+z\,:\, z\in \cZ\}$. Fixed a unit vector $v\in \bbR^d $ we define the perturbed jump rates as 
\be\label{roseto}
r_t^\l(x,y):=\begin{cases} \exp \{ \l \cos(\o t ) (z \cdot v)\} \, r(x,x+z) &\text{ if } y=x+z \text{ for some } z\in \cZ\,,\\
0 & \text{ otherwise}\,.
\end{cases}
\en
Since $\pi$ is injective on $\cZ$, if $y=x+z$ for some $z\in \cZ$ then $z$ in univocally determined, thus assuring the the above definition is well posed.
 Due to \eqref{roseto}  $g(s,x,y)= \t(s) E(x,y)$ with   $\t(s)= \cos(\o s)$ and 
  \[E(x,y)=\begin{cases}  z \cdot v 
  &\text{ if } y=x+z \text{ for some } z\in \cZ\,,\\
0 & \text{ otherwise}\,.
\end{cases}
\]
   In place of \eqref{kinder_cards} we now set 
  $\Psi(x):=-\sum_{z\in \cZ}\left( r^*(x,x+z)+r(x,x+z)\right) z$.
  Then \eqref{salazar} remains valid. Indeed, for $z\in \cZ$ we have   $E(x+z,z)=-z\cdot v$ and therefore
  $E_{r^*}^*(x)=\sum_{y\in \bbT^d_N} r^*(x,y) E(y,x)=-\sum_{z\in \cZ} r^*(x,x+z)z\cdot v$.
  Due to \eqref{salazar}   equation \eqref{telefono} is still valid.

We now introduce the lifted random walk $Y^\l:=(Y^\l_t)_{t\geq 0}$ by requiring that
$\pi(Y^\l_t)=X^\l_t$ and that $Y^\l $ makes a jump  $z$ whenever $X^\l$ jumps from $x$ to $x+z$ for some $z\in \cZ$ (due to our assumptions, a.s. the jumps of $X^\l$   belongs to  $\cZ$). As $Y^\l_0$ we can take any point in $\pi^{-1}( X^\l_0)$.   The mean instantaneous velocity is now given by
\be\label{matteo106}
V_\l (t):=\frac{d}{dt} \bbE_{\pi_\l} \bigl[ Y_t^\l ] = \sum_{z\in \cZ } \bbE_{\pi_\l}\bigl[ r^\l_t (X^{\l}_t, X^{\l}_t+z) \bigr] z\,.
\en
\begin{Theorem}\label{teo_CM_esteso} 
Fix  $\o\not =0$  and set 
$\g(x) := \sum_{z\in \cZ}  r(x,x+z) z$. Then  the content of Theorem \ref{teo_CM_esteso} remains valid by replacing the term 
$\pi [c_j]\d_{j,k} $ in \eqref{jabba1} and \eqref{jabba2} with $ \sum_{z\in \cZ} \pi[r(\cdot, \cdot+z)]z_j z_k$.
\end{Theorem}
The proof is a slight modification of the proof of Theorem \ref{teo_CM} and it is sketched  in Section \ref{sec_dim_teo_CM_esteso}.

%
%

\section{Examples}\label{sec_esempi}
In this section we present some applications of the  theoretical results  developed so far.
%
%

\subsection{Random walks on $\bbZ^d$  with confining potential and external field}
Below, given sites $y,z\in \bbZ^d$, we write $y\sim z$ if $|y-z|=1$.  \subsubsection{Unperturbed random walk}
As unperturbed process we take the 
 nearest--neighbour random walk $(X_t)_{t\geq 0}$ on $\bbZ^d$ with transition rates given by 
	\be\label{tassometro} r(y,z) = \exp \Big\{ -\frac 12 (V(z) - V(y)) +\frac 12 f(y,z) \Big\} \,,
	\qquad y\sim z\,,\en
for $V$  potential function.
 We assume that 
\be\label{trattore}\lim_{|y|\to +\infty} V(y)=+\infty  \;\;\text{ and }\;\; \| f \|_\infty <\infty\,.
\en

At cost of including the inverse temperature $\b$ in $V$ and $f$, we take $\b=1$. If the above rates come from a local detailed balance  then it must be $\frac{r(y,z)}{r(z,y)}=e^{-\D H(y,z)}$, where $\D H(y,z)$ is the energetic variation in a transition from $y$ to $z$. In this case, due to \eqref{tassometro}, we have
$ \D H(y,z)=  (V(z) - V(y)) + \frac 12 [f(z,y) -f(y,z)]$ for $ y \sim z$.
It is then natural to think of $f(y,z)$ as the work done by an external field on the particle during the transition from $y$ to $z$ and therefore to take
$f(y,z)=-f(z,y)$, thus leading to 
\be\label{LDB}
\D H(y,z)=  (V(z) - V(y)) -   f(y,z) \,, \qquad y \sim z \,.
\en
 The special case of a spatially uniform external field equal to $v\in \bbR^d$  (in addition to the conservative field associated to $V$) can  be described by taking $f(y,z)=  v\cdot (z-y)$, or equivalently by changing the potential $V(y)$ into $V(y)- v\cdot y$. In general, one can  include into $V$ the effect of all potential fields.

The factor $e^{- \frac 12 f(y,z) }$ in the rate $r(y,z)$ can also be due to a microscopic energetic barrier (as in the random barrier model) and in this case it is natural to have $f(y,z)=f(z,y)$. Of course, we can take $f\equiv 0$ as well.

%

\smallskip

Following \cite[Section~10.5]{BFG2}, when     $V\in C^1(\bbR^d) $,  we say that 
$V$ has \emph{diverging radial variation which dominates the transversal variation} if, by orthogonally decomposing $\nabla V(y)$ with $y\not =0$  as   
\[ \nabla V(y)=\la \nabla V(y), \hat{y} \ra \hat{y} + W(y) \text{ with }  \hat y:= y/|y|\,,
\] it holds
\be\label{radio_italia}
\lim_{|y|\to+\infty} \la \nabla V(y), \hat y \ra =+\infty\; \text{ and }\;|W(y)| \leq \frac{\a}{\sqrt{d}} \la \nabla V(y), \hat y\ra + C
\en
for $\a\in [0,1)$ and $C\geq 0$. Note that \eqref{radio_italia} implies that  $\lim_{|y|\to +\infty} V(y)=+\infty$

We recall some results for the unperturbed random walk obtained (sometimes implicitly)  in \cite{BFG2}:
\begin{Proposition}\cite{BFG2}   \label{kalush}
The following hold:
\begin{itemize}
\item[(i)] The unperturbed random walk  does not explode almost surely for any starting point.
\item[(ii)]  If $f(y,z)=f(z,y)$ for all $y\sim z$ and if $Z:= \sum_{ y \in \bbZ^d} e^{-V(y)} < \infty$, then  the  unperturbed random walk  is reversible with respect to the stationary distribution $\pi (x) = e^{-V(x)} /Z$.
\item[(iii)]  The unperturbed random walk admits a stationary distribution  if 
\be\label{cond_190}  \lim_{|y|\to +\infty} -\frac{LU}{U}(y)=+\infty\,, \qquad U(y):= e^{V(y)/2}\,.
\en
\item[(iv)] Setting  $r_0(y,z): = \exp \{ -\frac 12 (V(z) - V(y)) \}$,
the above condition \eqref{cond_190} is satisfied 
if $\hat r_0 (y):= \sum_{z:z\sim y} r_0(y,z) \to +\infty $ as $|y|\to\infty$, and this  in turn holds whenever
$V\in C^1(\bbR^d)$ has diverging radial variation which dominates the transversal variation. 
\end{itemize}
\end{Proposition}
We  refer the interested reader to \cite[Section~10.5]{BFG2} for a class of external forces $f$ for which the stationary distribution exists and is given by $\pi (x) = e^{-V(x)} /Z$.
\begin{proof}[Proof of Proposition \ref{kalush}]
Non--explosion in Item (i) is guaranteed by the existence of a diverging non-negative function $U$ on $\bbZ^d$ satisfying \eqref{grano}. As discussed in \cite[Section~10.5]{BFG2},  this can be taken to be $U(y) = e^{V(y)/2}$, to find that
	\be\label{nielsen}
	\begin{split}
	\frac{LU}{U}(y) & = \sum_{z:z\sim y} \big( e^{\frac{V(z)-V(y)}{2}} -1\big) r(y,z)
	 \\
	& = \sum_{z:z\sim y}\left( 1- e^{-\frac 12 (V(z) - V(y) }\right)e^{\frac 12 f(y,z)}\leq 2d \, e^{\frac{\| f\|_\infty }{2}}\qquad \forall y \in \bbZ^d\,.
	\end{split}
	\en
	To prove  Item (ii) one easily checks detailed balance.  To prove Item (iii), by \cite[Proposition~4.1]{BFG2}, it is enough to show that \eqref{cond_190} implies Condition $C(\s)$ with $\s=0$  defined in \cite[Section~3]{BFG2}.
By taking $u_n:=U$ there, this condition $C(0)$ reduces to the following: (a) $\sum_{z:z\sim y} r(y,z) U(z)<+\infty$ for all $y$; (b) $U$ is bounded from below by a positive constant; 
(c) $\lim_{|y|\to +\infty} W(y)=+\infty$ where
 $W(y):=-LU(y)/U(y)$; (d) $W$ is bounded from below. We note that  (a) is trivially satisfied; (b) is valid as $U= e^{V/2}$ and  $\lim_{|y|\to +\infty} V(y)=+\infty$;
 (d) follows from (c), and (c) corresponds to \eqref{cond_190}.

Finally,  Item (iv) follows from the observations contained in the proof of  \cite[Lemma~10.3]{BFG2}.  For the reader's convenience we just point out that the first part follows from the estimate
\be\label{castagnole}
 -\frac{LU}{U}(y)= \sum_{z:z\sim y} r(y,z) -  \sum_{z:z\sim y}   e^{\frac{1}{2}f(y,z)}  \geq \hat r_0(y) e^{-\frac{1}{2} \|f\|_\infty} - 2d e^{\frac{1}{2} \|f\|_\infty}\,.
\en
We point out  that the derivation of the second part of Item (iv)  in the proof of \cite[Lemma~10.3]{BFG2} does not use that $\sum_{y\in \bbZ^d} e^{-V(y)}<+\infty$ as assumed at the beginning of Section~10.5 in \cite{BFG2} .\end{proof}

In the  case $d=1$ we can say more.  Indeed, writing $m(y)=e^{-V(y)} \phi(y)$, 
the measure $m(y)$ satisfies  detailed balance if and only if 
\[
\phi(y)e^{\frac{1}{2} f(y,y+1)}= \phi(y+1)e^{ \frac{1}{2} f(y+1,y)}\, 
\qquad \forall y \in \bbZ\,,
\]
which means $\phi(y)= \phi(0) c(y) $ for all $y\in\bbZ$, where 
\be\label{rainbow100}
c(y):= \begin{cases}
\prod_{j=0}^{y-1} 
e^{ \frac{1}{2} \bigl( f(j,j+1)-f(j+1,j)\bigr) }& \text{ if } y\geq 1\,,\\
\prod_{j=y}^{-1} 
e^{ \frac{1}{2} \bigl( f(j+1,j)-f(j,j+1)\bigr) }& \text{ if } y \leq -1\,.
\end{cases}
\en
As an immediate consequence we have:
\begin{Proposition}\label{kalushbis} For $d=1$  the unperturbed random walk admits a reversible distribution $\pi$  if and only if $\cZ:=\sum_{y\in \bbZ} e^{-V(y)} c(y)<+\infty$. In this case we have  $\pi(y) = e^{-V(y)}c(y)/\cZ$. In particular reversibility  takes place in the following cases: (i) $f(y,z)=f(z,y)$ for all $y\sim z$ and $\sum_{y\in \bbZ} e^{-V(y)} <+\infty$, (ii) $f$ is only non--zero on a finite family of edges and $\sum_{y\in \bbZ} e^{-V(y)} <+\infty$, (iii)  
$\sum_{y\in \bbZ} e^{-V(y)+  \|f\|_\infty |y| } <+\infty$.
\end{Proposition}
\subsubsection{Perturbed random walk}
For the perturbed process we fix $\l>0$ and a bounded and measurable function $g : [0,t] \times \bbZ^d \times \bbZ^d \to \bbR$, and set $r^\l (s,y,z) := e^{\l g(s,y,z)} r(y,z) $ for all $s \in [0,t]$ and neighbouring vertices $y\sim z$.  

We isolate the following technical result for later applications:
\begin{Lemma}\label{grandepuffo} Let $\a:  [0,t] \times \bbZ^d \times \bbZ^d \to \bbR$ be bounded and measurable (for example, $\a=g$). Then $\a$ satisfies Condition $C[\nu,t]$ with parameter $\theta:=\| \a\|_\infty^{-1} e^{- \| f\|_\infty}$ for $\nu =\d_x$ and any $x\in \bbZ^d$, and in general for any distribution   $\nu$ such that $\nu[e^{V/2}]<+\infty$.
\end{Lemma}
\begin{proof}
By Lemma \ref{tik_tok}, in order to guarantee that a function $\alpha$ satisfies Condition $C[\nu,t]$ with parameter $\theta>0$ it suffices to find a positive function $U: \bbZ^d \to \bbR$, bounded away from zero, such that $LU \leq CU - \theta |\a |_r U $ for some $C>0$ and such that $\nu[U]<+\infty$. Again we take $U:= e^{V/2} $. Since $\lim_{|y|\to+\infty}V(y) =+\infty$, $U$ is bounded away from zero. By \eqref{castagnole} 
	$(LU/U)(y) \leq 2d \, e^{\| f \|_\infty /2} - \hat r_0(y) e^{-\| f \|_\infty /2}$,
while 
\[ |\a |_r (s,y) := \sum_{z:z\sim y } |\a (s,y,z) | r(y,z) \leq \| \a \|_\infty \hat r_0(y) e^{\| f\|_\infty /2}\,.
\] It thus suffices to take $\theta :=\| \a\|_\infty^{-1} e^{- \| f\|_\infty} $ to have that $LU /U\leq C - \theta |\a |_r $ 
for some $C>0$.  \end{proof}

The application of  Lemma \ref{grandepuffo} above is twofold. Firstly, one can take $\a=g$ (since $g$ is bounded), to get that $g$ satisfies Condition $C[\nu, t]$ for $\nu$ as in the lemma. This, by Theorem \ref{th:explosionX}, automatically implies non-explosion of the perturbed process for $\l < 1/\left(8 \| g\|_\infty  e^{ \| f\|_\infty}\right)$,  as well as the linear response results stated in Theorems \ref{th:JS} and \ref{cor:JS}. Secondly, one can apply Lemma \ref{grandepuffo} to a bounded function $\a$ entering in the definition of the additive functional \eqref{eq:F}, to get that $\a$ satisfies Condition $C[\nu,t]$ and therefore the quantities in \eqref{aa} belong to $\bbL^p(\bbP_\nu)$.  For example,  by Lemma \ref{grandepuffo}, if $\a$ and $v$ are bounded  then  \eqref{res1}, \eqref{res2}, \eqref{res3} hold for $\nu=\d_x$ with $x\in \cX$ and in general for any initial distribution $\nu$ with  $\nu[e^{V/2}]<+\infty$.



We conclude this section by discussing  an application of Theorem \ref{cor:JS} on 
 linear response starting from the unperturbed stationary distribution  $\pi$. We consider jump rates defined in terms of a local detailed balance.  As for \eqref{LDB} we consider $g(s,\cdot,\cdot)$ antisymmetric, i.e.~$g(s,x,y)=-g(s,y,x)$.  Then we focus on the work functional  $F(X_{[0,t]})$, given by the work done by all forces (also the time-dependent ones producing the perturbation). We have	\be\label{lavoro} F(X_{[0,t]}) := -V(X_t ) + V(X_0) + \sum_{s\in (0,t] } f(X_{s-} , X_s ) + 2\l \sum_{s\in (0,t]} g(s,X_{s-} , X_s ) \,.
	\en
\begin{Proposition} Suppose that the unperturbed process has a stationary distribution $\pi$, from which it is started (see Propositions \ref{kalush} and \ref{kalushbis} for sufficient conditions).  Suppose that  $f$ and $g$ satisfy Condition $C[\pi,t]$ and that $V\in L^p(\pi)$ for some $p\in (1,+\infty)$ (by Lemma \ref{grandepuffo} and since $V\to+\infty$ 
it suffices to require $\pi[e^{V/2}]<+\infty$, which reads $\sum_{y\in \bbZ^d} e^{-V(y)/2}<+\infty$
in the case of  zero external force $f \equiv 0$). Then 
\[\begin{split}  \partial_{\l =0} \bbE_\pi [ F(X_{[0,t]}^\l ) ]  = & 
	-\int_0^t ds \, \bbE_\pi \bigl[ V(X_s) \psi_{t-s} (X_0 ) ] 
	+ \int_0^t \bbE_\pi \big[ (f g )_r (s,X_s ) \big] ds  
	\\ & + \int_0^t ds \int_0^s du \, 
	\bbE_\pi \Big[ f_r (s, X_s ) \psi_{s-u}(X_{s-u} )  \Big] 
	+ \int_0^t \bbE_\pi [ g_r (s,X_s ) ] ds \,,
	\end{split} 
	\]
with $\psi_t(x)$ defined as  in \eqref{def:psi}.
\end{Proposition}
\begin{proof}
By linearity, we  have 
	\[ \partial_{\l =0} \bbE_\pi [ F(X_{[0,t]}^\l ) ] = \partial_{\l =0} \bbE_\pi [ F_1(X_{[0,t]}^\l ) ]
	+ \partial_{\l =0} \bbE_\pi [ F_2(X_{[0,t]}^\l ) ] + \partial_{\l =0} \big( 2\l \bbE_\pi [ F_3 (X_{[0,t]}^\l ) ] \big) ,
	\]
where 
	\[F_1(\xi_{[0,t]} ) := -V(\xi_t ) , \quad \; 
	F_2 (\xi_{[0,t]} ) := \sum_{s\in (0,t] } f(\xi_{s-} , \xi_s ) , 
	\quad \;
	F_3 (\xi_{[0,t]} ) := \sum_{s\in (0,t]} g(s,\xi_{s-} , \xi_s )  \,.\]
Note that, referring to the beginning of Section \ref{sec:types},  $F_1$ is a functional of type (1), while $F_2$ and $F_3$ are functionals of type (3) with $f$ and $g$ bounded.

If the bounded functions $f$ and $g$ satisfy Condition $C[\pi,t]$ and  $V\in L^p(\pi)$ for some $p\in (1,+\infty)$  (which, by stationarity, is equivalent to  $V(X_t ) \in L^p( \bbP_\pi )$), then  the assumptions of Theorem \ref{cor:JS} are satisfied. We point out that  we have excluded apriori the case  $p=+\infty$ since $V(y) \to+\infty $ as $|y|\to\infty$, and therefore it cannot be  $V\in L^\infty(\pi)$.
We observe that  $\pi[e^{V/2}]<+\infty$ and the boundedness of $f$ and $g$  imply that $f$ and $g$ satisfy Condition $C[\pi,t]$ by Lemma \ref{grandepuffo}. If $\pi[e^{V/2}]<+\infty$, then trivially we also have $V\in L^p(\pi)$ for any $p\in (1,+\infty)$.  If $f\equiv 0$, then $\pi(y)=e^{-V(y)}/Z$ where $Z:=\sum _{y} e^{-V(y)}<+\infty$ (see Proposition~\ref{kalush}--(ii)). On the other hand,  the condition $Z<+\infty$ is trivially satisfied if  $\sum_{y\in \bbZ^d} e^{-V(y)/2}<+\infty$ as $V$ is a diverging function. In particular, for $f\equiv 0$ and under the assumption $\sum_{y\in \bbZ^d} e^{-V(y)/2}<+\infty$, we get  $\pi[e^{V/2}]=\sum_{y\in \bbZ^d} e^{-V(y)/2}<+\infty$.

By applying Theorem \ref{cor:JS} we then get 
\begin{align*}
&   \partial_{\l=0} \bbE_\pi \bigl[ F_1 ( X^\l_t) \bigr]  = -
	\int_0^t ds \, \bbE_\pi \bigl[ V(X_s) \psi_{t-s} (X_0 ) ]
 , \\
 & \partial_{\l =0} \bbE_\pi [ F_2(X_{[0,t]}^\l ) ]
	= \int_0^t \bbE_\pi \big[ (f g )_r (s,X_s ) \big] ds  
	+ \int_0^t ds \int_0^s du \, 
	\bbE_\pi \big[ f_r (s, X_s ) \psi_{s-u}(X_{s-u} )  \big] , 
	\\
&  \partial_{\l =0} \big( 2\l \bbE_\pi [ F_3 (X_{[0,t]}^\l ) ] \big) 
	= 2\lim_{\l \to 0} \bbE_\pi [ F_3 (X_{[0,t]}^\l ) ] =2 \bbE_\pi [ F_3 (X_{[0,t]} ) ] 
	=2 \int_0^t \bbE_\pi [ g_r (s,X_s )] ds.
	\end{align*}
In the last line, the second equality follows from \eqref{titans2X} in Proposition \ref{derivoX}, and in the third equality we have used that $G_t(X_{[0,t]})$ introduced in \eqref{ginoX} defines a martingale, as anticipated in Remark \ref{rem:Gmart}. Putting all together we get our claim.\end{proof}

\begin{Remark}
We point out that the non-explosion of the perturbed chain could have alternatively been derived using Theorem 1 in \cite{CZ} with Lyapunov function $e^{V/2}$, with  computations similar to the one in \eqref{nielsen}, under the assumption that the perturbation $g$ is continuous  in time.
\end{Remark}

 \subsection{Birth and death processes}  
 Consider a birth and death process on the set of non-negative integers $\bbN$, that is a Markov jump process  (in particular, a continuous-time Markov chain) $(X_t)_{t\geq 0}$  with transition rates 
 	\[ r(0,1) = r_0^+ >0  \qquad 
 	r(k, k\pm 1 ) = r_k ^\pm >0 \]
 and $r(k,j) = 0$ otherwise (for later use we set $r_0^-:=0$).
This can of course be seen as a particular instance of a random walk in confining potential with external field, and thus analyzed as in the previous section. We take here a different approach.

It is known (cf.~\cite[Corollary~3.18]{Chen}, \cite[Remark~4]{CZ}) that the the unperturbed process a.s. does not explode if and only if 
\be\label{felice1} \sum_{k=0}^\infty \g_k =+\infty\,, \;\; \text{ with } \;\; \g_k:=\frac{1}{r_k^+}+ \frac{r_k^-}{r_k^+}\cdot \frac{1}{r_{k-1}^+}+\cdots +  \frac{r_k^-}{r_k^+} \cdot \frac{r_{k-1}^-}{r_{k-1}^+}\cdots \frac{r_1^-}{r_1^+} \cdot \frac{1}{r_{0}^+}\,.
\en
Hence, we assume \eqref{felice1} to be satisfied.
If in addition 
\begin{equation}\label{felice2}
 	 Z := 1 + \sum_{k\geq 1} \frac{ r_0^+ r_1 ^+ \cdots r_{k-1}^+}{r_1^- r_2 ^- \cdots r_k^-} < +\infty\,,  	\end{equation}
	 then the unperturbed process admits a invariant distribution $\pi$, which is unique, reversible and given by 
\begin{equation}\label{stationaryBDbis}
	 \pi (0) = \frac 1Z , \qquad \pi (k) = \frac 1Z \frac {r_0^+ r_1 ^+ \cdots r_{k-1}^+}{r_1^- r_2 ^- \cdots r_k^-} , \quad k\geq 1
	 \end{equation}
	  (this statement can be verified by simple computations). 
%
%
%
%
%
%
%
Note that when $r_k^+ = r^+$ for all $k\geq 0$ and $r_k^- = r^-$ for all $k \geq 1$ then \eqref{felice1} is always satisfied, while \eqref{felice2}  reduces to $r^-> r^+$, and $\pi (k)$ is proportional to $(r^+/r^-)^k$ for all $k\geq 0$. \\

For the perturbation fix $\l >0$ and a bounded measurable function $g: [0,t ] \times \bbN \times \bbN \to \bbR$, and set 
	\[ r^\l_t  (k, k\pm 1 ) = e^{\l g (t , k , k\pm 1)} r_k^{\pm} . \]
Then, if $\nu$ is a probability distribution on $\bbN$ and $t>0$, the function $g$ satisfies Condition $C[\nu , t]$ in Definition \ref{def_papaya} if and only if for some $\theta >0$ 
	\begin{equation}\label{BDC}
	 \bbE_\nu \Big[ \exp \Big\{ \theta \int_0^t |g(s, X_s , X_{s} +1)| r_{X_s}^+ ds 
	+ \theta \int_0^t |g(s, X_s , X_{s} -1)| r_{X_s}^- ds  \Big\} \Big] < \infty . 
	\end{equation}
If the above condition is satisfied then the perturbed process $X^\l$ almost surely  does not explode in $[0,t]$ for $\l$ small  by Theorem \ref{th:explosionX}, and the linear response results described in Theorems \ref{th:JS} and \ref{cor:JS} hold.

Note that, since $g$ is bounded, \eqref{BDC} trivially holds if, writing $\hat r_k = r_k^+ + r_k^-$, the collection $(\hat {r}_k)_{k\geq 0}$ is uniformly bounded. 
If, on the other hand,  $\sup_{k\geq 0} \hat r_k =+\infty$ then again \eqref{BDC} trivially holds if $g$ is only non-zero on a finite number of edges (i.e.\ if the perturbation is finitely supported). We now discuss sufficient conditions for \eqref{BDC} to hold in the general case $\sup_{k\geq 0} \hat r_k =+\infty$ and $g$ non-zero on infinitely many edges. 
To this aim we first observe that, if $\limsup_{k\to\infty } r_k^+ / r_k^- < 1$, then both \eqref{felice1} and \eqref{felice2} are satisfied. In particular, the unperturbed system a.s.\ does not explode and it admits the invariant distribution $\pi$.
\begin{Lemma} \label{LeBD}
Assume that $\limsup_{k\to\infty } r_k^+ / r_k^- < 1$. Let $\a:[0,t]\times \bbN\times \bbN\to\bbR$ be measurable and bounded (e.g.~take $\a=g$). Then, for any $B>1$,  there exists  $\theta>0$ such that  $\a$  satisfies Condition $C[\nu,t]$ with parameter $\theta$ for any distribution $\nu$  satisfying $\nu[ W]<+\infty$ where $W(k):=B^k$. In particular, $\a$ satisfies Condition $C[\d_x,t]$   with the same  parameter $\theta$ for all $x\in \bbZ^d$.
\end{Lemma}
\begin{proof} 
Recalling Lemma \ref{tik_tok}, to guarantee \eqref{BDC} it suffices to find a positive  function $U: \bbN \to \bbR$, strictly bounded away from zero, such that  $\nu[U]<+\infty$ and such that $LU \leq CU - \theta |\a|_r U$ for some  $C,\theta>0$. 
The last property holds provided
	\begin{equation}\label{BDC2} 
	\Big( \frac{U(k+1)}{U(k)} -1 + \theta \|\a \|_\infty  \Big) r_k^+ 
	+ \Big( \frac{U(k-1)}{U(k)} -1 + \theta \|\a \|_\infty \Big) r_k^- \leq C \qquad \forall k \in \bbN\,.
	\end{equation}
Under the assumption that $ \limsup_{k\to\infty } r_k^+ / r_k^- < 1$,  there exists $\gamma < 1$ such that $r_k^+ \leq \gamma r_k^- $ for all $k$ sufficiently large. Set $U(k) := A^k$ for $A\in(1,B]$ to be chosen later. Then, by taking $\nu$  with $\nu[W]<+\infty$, we have $\nu[U]<+\infty$. Moreover
the inequality \eqref{BDC2} reads 
	\[ \Big( A -1 + \theta \|\a \|_\infty  \Big) r_k^+ 
	+ \Big(\frac 1 A -1 + \theta \|\a \|_\infty \Big) r_k^- \leq C \qquad \forall k \in \bbN
	\,.	\]
Using that $r_k^+ \leq \gamma r_k^-$ we see that the left hand side is bounded by 
$ ( \gamma (A-1) + \theta (\gamma +1 ) \| \a \|_\infty + 1/A -1 )r_k^- $ for $k$ large enough, so  \eqref{BDC2} holds provided 
	\[ \gamma (A-1) + \theta (\gamma +1 ) \| \a \|_\infty + 1/A -1 \leq 0 . \]
Writing $A = 1+\varepsilon$, and multiplying both members by $(1+\e)/\e$,  it can be easily checked that 
the last expression is equivalent to 
\be\label{tonno_cipolla} \g  (1+\e) + \theta (\gamma +1 ) \| \a \|_\infty (1+\e)/\e  \leq 1\,.  \en
As $\g<1$ we can take $\e$ small to have $A=1+\e\leq B$ and $\g(1+\e)<1$, afterwards we  can take $\theta $ small to ensure \eqref{tonno_cipolla}. This proves the first part of the lemma, while the last statement follows immediately from the first part.
\end{proof} 

We conclude by discussing linear response formulas when starting from the stationary distribution $\pi$ defined in \eqref{stationaryBDbis} assuming both \eqref{felice1} and \eqref{felice2}.  We suppose that $g$ satisfies condition $C[\pi,t]$. For example, according to Lemma 
\ref{LeBD} and due to the explicit form \eqref{stationaryBDbis} of $\pi$, $g$ satisfies condition $C[\pi,t]$ if   $\g:=\limsup_{k\to\infty } r_k^+ / r_k^- < 1$  and 
\[
\sum_{k=1}^\infty \frac { r_1 ^+}{r_1^-} \cdot \frac{r_2^+}{r_2^-} \cdots  \frac{r_{k-1}^+}{r_{k-1}^-}\cdot \frac{B^k}{r_k^-}<+\infty
\]
for some $B>1$. As $\g<1$, it is enough that  $\sum_{k=1}^\infty  {\tilde\g}^k / r_k^-<+\infty$ for some $ \tilde \g\in (\g,1)$.

Note that, since the unperturbed dynamics is reversible with respect to the stationary distribution $\pi$, then $r^*(k, k\pm 1) = r(k, k\pm 1) = r_k^\pm$. It thus follows from Theorem \ref{cor:JS} that if $v : \bbN\to\bbR $ is a measurable function with $v(X_t ) \in L^p (\bbP_\pi )$  (i.e.\ $v\in L^p(\pi)$) for some $p>1$, then 
\[
 \partial_{\l=0} \bbE_\pi \bigl[ v ( X^\l_t) \bigr]  = 
 \int_0^t ds \, \bbE_\pi \bigl[ v(X_s) \psi_{t-s} (X_0 ) ]
  \]
with $\psi_{t-s}(X_0)$ defined as in \eqref{def:psi}. By reversibility we have 
\[
\psi_s(k)=r_k^+ \bigl( g(s,k+1,k)-g(s,k,k+1)\bigr) + r_k^-\bigl( g(s, k-1,k)-g(s,k,k-1)\bigr)\,.
\]In the decoupled case $g(s,k,k\pm 1) = \tau(s) E_k^\pm$ for $s\in [0,t]$ and $k\geq 0$, we get
	\[ \begin{split} 
	\partial_{\l=0} & \bbE_\pi  \big[ v(X_t^\l ) \big ] = 
	\int_0^t ds \tau(t-s) \bbE_\pi \big[ v(X_s) (E^*_r(X_{0}) - E_r (X_{0})) \big] 
	\\ & = \int_0^t ds \tau (t-s) \bbE_\pi \big[ 
	v(X_s) (
	( E_{X_{0}+1}^- - E_{X_{0} }^+ ) r_{X_{0}}^+ + 
	( E_{X_{0}-1}^+ -  E_{X_{0} }^- ) r_{X_{0}}^- )
	 \big] . 
	\end{split} 
	\]
Note that if  $E_k^+ = E^+$ and $E_k^- = E^-$ the above formula simplifies to 
	\[ \partial_{\l=0} \bbE_\pi  \big[ v(X_t^\l ) \big ] = 
	(E^- - E^+) \int_0^t ds \tau (t-s) \bbE_\pi \big[
	 v(X_s) (r_{X_{0}}^+ - r_{X_{0}}^- )
	 \big].\]
Linear response formulas for the additive functionals discussed in Theorem \ref{cor:JS} can be written down similarly.  To check that the quantities in  \eqref{aa} are in $L^p(\pi)$ for $\a$ bounded, it is enough to check that $\a$ satisfies condition $C[\pi,t]$ and Lemma \ref{LeBD} can help to this aim.

\begin{Remark}
An alternative criterion for non-explosion of the perturbed birth and death process is proved in \cite{CZ}, see Proposition 6 therein.
\end{Remark}

\subsection{Instability of non-explosion under small perturbations}\label{contro}
We provide here the counterexample mentioned in Remark \ref{rem_instabile}. As unperturbed process we  take a birth and death process on $\bbN = \{ 0,1,2,3\ldots \}$  with birth rates $ r_k^+ = (k+1)^2$ for all $k\geq 0$ and death rates $r_k^- = r_k^+$ for all $k\geq 1$. By criterion \eqref{felice1} it is simple to check that   the unperturbed  process a.s.\ does not explode. However, we can perturb it by setting $r_k^{\lambda, - } := r_k^- $ and $r_k^{\lambda , + } := e^\l r_k^+$ (i.e.~$g(s,k,k+1)=1 $ and $g(s,k+1,k)=0$). Then, again by \eqref{felice1}, one can check that   for any $\l >0$ the perturbed process explodes in finite time with positive probability. Indeed, for the perturbed process $\g_k$ in \eqref{felice1} is given by $\g_k=\sum_{j=0}^{k} \frac{1}{ (k+1-j)^2 e^{j\l}}$. In the last sum the first $C\ln k $ terms contribute for at most $C (\ln k) (k+1-C\ln k)^2$ which is summable in $k$, while the remaining terms contribute for at most $(k+1) e^{- C (\ln k)\l}$ which is summable in $k$ when taking $C=C(\l)$ large enough.

\subsection{Birth processes}\label{nascite}
We point out that, while we have presented in Lemma \ref{tik_tok}  sufficient conditions for Condition $C[\nu,t]$ to hold, these are not necessary, and in some cases one can directly and more efficiently verify Condition $C[\nu,t]$ using Definition \ref{def_papaya}. As an example, take a pure birth process on $\bbN = \{0,1,2\ldots \}$, starting at $0$ (hence $\nu=\d_0$) and staying in each state $i \in \bbN$ an exponential time of parameter $\hat r_i \in (0,\infty )$ and then jumping to $i+1$. Then  if $\a : [0,t] \times \bbN \times \bbN \to \bbR$ is a bounded measurable function, 
	\[ \bbE \Big[ \exp \Big\{ \theta \int_0^t |\a |_r (s,X_s ) ds \Big\} \Big] 
	\leq \bbE \Big[ \exp\Big\{ \theta  \| \a \|_\infty  \int_0^t \hat{r} (X_s ) ds \Big\} \Big] 
	, \]
and by taking $\theta = 1 / \| \a\|_\infty $ one can directly check that
	\[ \bbE \Big[ \exp\Big\{   \int_0^t \hat{r} (X_s ) ds \Big\} \Big] 
	= 
	1 + \sum_{n=1}^\infty \hat r_0 \hat r_1 \cdots \hat r_{n-1} \, \frac{t^n}{n!}
	 \]
which is finite as long as $\hat r_n < n/t$ for $n$ large enough. 
On the other hand, if we take e.g. $\a$ constant, the criterion in  Lemma \ref{tik_tok}  cannot be fulfilled for diverging rates $\hat r_n$. To justify  our claim, we take $\a(s,x,y)=1$ without loss of generality. Then (c) in Lemma \ref{tik_tok} reads 
$\hat{r}_n(\frac{ U(n+1)}{U(n)} -1+\theta  ) \leq C $.  If $\lim_{n\to +\infty}\hat r_n=+\infty$, we would have $\limsup_{n\to +\infty} \frac{ U(n+1)}{U(n)} \leq 1-\theta$. As $\theta>0$, this would imply that $\lim_{n\to +\infty} U(n)=0$, thus violating (a) in Lemma \ref{tik_tok}.

\subsection{Random walk on $\bbZ^d$ in a random conductance field}\label{sec:3lune} We consider a random walk  $(Y^\xi _t)_{t\geq 0}$  on $\bbZ^d$ in a random environment $\xi$.   The space of environments is given by  the product space $\Xi:= (0,A]^{\cE_d}$ with the product topology,  endowed with the Borel $\s$--field, $\cE_d$ being the set of non-oriented edges of $\bbZ^d$ and $A$ being  a fixed positive constant. We write  $\xi_{x,y}$ in place of $\xi_{\{x,y\}}$ for the value of $\xi$ at the edge $\{x,y\}$ (note that $\xi_{x,y}=\xi_{y,x}$).
Since the environment $\xi$ at a given  edge does not depend on the orientation of the edge, $\xi$ is also called \emph{conductance field}.
Given  $\xi\in \Xi$ the random walk  $(Y^\xi _t)_{t\geq 0}$ starts at the origin and performs nearest--neighbour jumps with jump rate from $x$ to $y$ given by $r(x,y):=\xi_{x,y}$. 
We consider the perturbed random walk $(Y^{\xi,\l} _t)_{t\geq 0}$  with perturbed jump rates $r^\l_t(x,y)=r(x,y)  e^{\l g^\xi(t,x,y)}= \xi_{x,y}e^{\l g^\xi(t,x,y)}$ where $g$ is bounded and measurable in $\xi, t,x,y$.  As $\xi _{x,y} \leq A$, both the original random walk and the perturbed one a.s.\ do not explode, $g$ satisfies condition $C[\nu,t]$ for any distribution $\nu$ and any time $t$ and  one can therefore apply Theorems \ref{th:JS} and \ref{cor:JS} to deal with the linear response (for each fixed environment $\xi$).

 To benefit from the stationarity and get more explicit formulas,  it is convenient to change viewpoint by considering the process \emph{environment viewed from the particle}, as we now detail.
The group $\bbZ^d$ acts on $\Xi$  by spatial translations as 
$(\t_z \xi)_{x,y}:= \xi  _{x+z, y+z }$.
We fix a probability measure $\cP$  on $\Theta$ which is stationary   w.r.t.\ the spatial translations $\t_z$ and such that 
\be\label{fante}
\cP( \xi\in \Xi \,:\, \t_z \xi = \t_{z'} \xi \text{ for some }z\not =z' \text{ in }\bbZ^d)=0
\en
(for example $\cP$ can be a product probability measure on $\Xi$).  We assume that  also $g$ is stationary, i.e.\ $g$ is of the form 
\[ g^\xi(t,x,y) = h(t, \t_x \xi, y-x)\]
for some bounded measurable function $h:  [0,+\infty) \times \Xi \times \{z\in \bbZ^d:|z|=1\}$.

Given $\xi\in \Xi$ we write $(\bar \xi_t)_{t\geq 0}$  for the    Markov jump process given by the environment viewed from the walker when the latter starts at the origin with environment $\xi$. Simply we have $\bar \xi_0:=\xi$ and  $\bar \xi _t:= \t_{Y^\xi_t} \o$ for all $t\geq 0$.
 The Markov jump process $(X_t)_{t\geq 0}$ we are interested in is just $ (\bar \xi_t)_{t\geq 0}$.   The space $(\cX,\cB)$ is then given by $\Xi$ with the Borel $\s$--field  and the transition  kernel is given  by 
\[
r(\xi, \cdot) := \sum _{z:|z|=1} \xi_{0,z}  \d_{\t_z \xi}(\cdot)\,.
\]Note that now the perturbation is dictated by the new function $\bar g(t,\xi, \xi'):= h(t, \xi, z)$  if $\xi'=\t_z \xi$ with $|z|=1 $ (i.e.\ $r_t^\l(\xi, d \xi')= e^{\l \bar g(t,\xi, \xi')} r(\xi, d \xi')$ as in \eqref{kolpakovZZZ}).
Moreover the random walk $(Y^\xi_t)_{t\geq 0}$ starting at the origin can be written as an additive functional of $\bar \xi _{[0,t]}$:
\be\label{cavallo}
Y_t^\xi =F ( \bar \xi_{[0,t]} ):=\sum_{s\in (0,t]: \bar \xi_{s-}\not = \bar \xi_s} \a(\bar \xi_{s-},\bar \xi_s) \qquad \forall t\geq 0\,,
\en
where 
\[\a(\xi', \xi''):= \begin{cases} z & \text{if } \xi''=\t_z\xi'  \text{ for some $z$ with $|z|=1$}\,,\\
0 & \text{otherwise}\,.
\end{cases}
\]
Although a priori the above  function $\a$ is not well defined pointwise, due to \eqref{fante} for $\cP$--a.a.\  $\xi$ the  expression $F(\bar \xi_{s-},\bar \xi_s)$ in \eqref{cavallo}  is well defined for all times $s$ (similar considerations hold for $\bar g(s,\xi,\xi')$ defined above). 

By the stationarity of $\cP$ w.r.t.\ the spatial translations $\t_z$ and since $\xi_{x,y}=\xi_{y,x}$, we get that $\cP$ is a reversible distribution for the process $(\bar{\xi}_t)_{t\geq 0}$.  Moreover, we have 
\[ (\alpha \bar g )_r ( s, \xi) = \sum_{z:|z|=1} \xi_{0,z} \,z\, h(s, \xi, z) \,,\qquad
	 \a_r (s,\xi)=  \sum_{z:|z|=1} \xi_{0,z} z\,.\]
Since $|\a|_r $ is uniformly bounded, Condition $C[\cP, t]$ is satisfied (see~Definition~\ref{def_papaya}). Hence, by Theorem \ref{cor:JS}, we have 
\begin{multline}\label{conad}
\partial_{\l=0} \int _\Xi d\cP (\xi )  \bbE ^\xi_0 \bigl[ Y^{\xi,\l}_t \rosso{ \bigr]}  = 
	 \int_0^t   ds \int _\Xi d\cP (\xi )  \bbE^\xi_0 \big[ (\alpha \bar g )_r (s, \t_{Y^\xi_s}\xi ) \big] 
	\\
	 - \int_0^t ds \int_0^s du \, 
	 \int _\Xi d\cP (\xi )  \bbE ^\xi_0  \Big[ \alpha_r (s, \t_{Y^\xi_s}\xi )     \psi_{s-u}( \t_{Y^\xi_{s-u}}\xi \rosso{ ) } \Big] 
	\,,
	\end{multline}
	where $\bbE^\xi_0[\cdot]$ denotes the expectation w.r.t. the random walk $(Y^{\xi,\l}_t )_{t\geq 0}$ starting at the origin in the fixed environment $\xi$ and 
	$ \psi_s(\xi)=\sum_{e:|e|=1} \xi_{0,e} ( h(s, \t_e \xi, -e)- h(s,\xi, e) )$.

Let us make more explicit  formula \eqref{conad} when
$g(t,x,y):= (y-x)\cdot v$ for $|x-y|=1$ where $v$ is a fixed vector. This case  corresponds to applying to the particle an external field, constant in time and space. The same external field appears  e.g.~in \cite{FGS2,GGN,GMP,LR,MP}. We write $\varphi (\xi)$ for the so called local drift of the unperturbed random walk, i.e.~$\varphi(\xi):= \sum_{z:|z|=1} \xi_{0,z} z$, and we also write  $\theta(\xi):=  \sum_{z:|z|=1} \xi_{0,z} (z  \cdot v) z$.
 Since $h(t,\xi,z)=z\cdot v$,  it is easy to check  that
\[  \a_r(s,\xi) =\varphi(\xi) \qquad 
(\a \bar g )_r (s, \xi)=  \theta(\xi)  \qquad \psi_s(\xi)=- 2 \varphi(\xi) \cdot v\,.
\]
As a consequence \eqref{conad} reads (by the change of variables $u\to s-u$)
\begin{equation*}
	\begin{split}
\partial_{\l=0} \int _\Xi d\cP (\xi )  \bbE ^\xi_0 \bigl[ Y^{\xi,\l}_t  \bigr]&  = 
	 \int_0^t   ds \int _\Xi d\cP (\xi )  \bbE^\xi_0 \big[ \theta( \t_{Y^\xi_s}\xi ) \big] 
	\\
&	 +2  \int_0^t ds \int_0^s du \, 
	 \int _\Xi d\cP (\xi )  \bbE ^\xi_0  \Big[ \varphi( \t_{Y^\xi_s}\xi )    \big( \varphi( \t_{Y^\xi_{u}}\xi ) \cdot v\big)  \Big] 
	\,.
	\end{split}
	\end{equation*}	
	Since the probability  $\cP$ is invariant (and even reversible) for the process ``environment viewed from the particle", we can simplify the above expression and get 
	\begin{equation*}
	\begin{split}
\partial_{\l=0} \int _\Xi d\cP (\xi )  \bbE ^\xi_0 \bigl[ Y^{\xi,\l}_t  \bigr]&  = 
	 t \int _\Xi d\cP (\xi ) \theta(\xi)
	\\
	 &+2  \int_0^t dr (t-r)  \, 
	 \int _\Xi d\cP (\xi )  \left( \varphi(\xi)\cdot v\right)  \bbE ^\xi_0  \Big[ \varphi( \t_{Y^\xi_{r}}\xi )     \Big] 
	\,.
	\end{split}
	\end{equation*}	
%
%

	\subsection{Complex mobility matrix}\label{es_CM} We use here the notation introduced in Section \ref{sec_CM} \rosso{dealing with nearest-neighbor random walks}.
Suppose that the unperturbed Markov jump process  on the torus $\bbT^d_N=\bbZ^d/ N \bbZ^d$ has spatially homogeneous jump rates, i.e.\ $r(x,y)=r(x+z,y+z)$ for all $x,y\in\bbT^d_N$,  $z \in \bbZ^d$, where the sums $x+z$, $y+z$ are thought modulo $N \bbZ^d$. 
We consider the perturbation with jump rates \eqref{pert_rates} with $\o\not=0$. 
As $r^\l_t(x,y)$ depends on $x,y$ only via $y-x$, 
 one can directly compute the mean instantaneous velocity   $V_\l(t)$ given in \eqref{matteo105} getting 
$ V_\l(t)= \sum_{e: |e|=1} r_t^\l (0,e) e= \sum_{e:|e|=1} \exp\{\l \cos (\o t) e\cdot v\} r(0,e)e$. As a consequence
\begin{align}
 \partial_{\l=0} V_\l(t)& = \sum_{e:|e|=1} \cos (\o t) (e\cdot v)  r(0,e) e= \Re (e^{i\o t} \s(\o) v)\,,\\
 \s(\o) v & =\sum_{e:|e|=1} (e\cdot v)  r(0,e) e \,.\label{frittella}
\end{align}
In particular, denoting  the canonical basis of $\bbR^d$ by $e_1,e_2, \dots, e_d$, we have $\s(\o) e_j=\left(r(0,e_j)+r(0,-e_j) \right) e_j$, i.e. $\s(\o)_{i,j}=\d_{i,j} \left(r(0,e_i)+r(0,-e_i)\right)$. 
Note that, with spatial homogeneity, $\s(\o)$ does not depend on the frequency $\o$.
The direct computation of $V_\l(t)$ becomes  more involved in the presence of spatial heterogeneity, where $\s(\o)$ exhibits a nontrivial dependence on $\o$.

\smallskip
  We now  use  directly   Theorem \ref{teo_CM} to compute $\s(\o)$ in the special case  given by  $d=1$, $N$ even   and  2-periodic unperturbed jump rates of the form  \[ 
r(x, x+1) = \begin{cases} r^+_0 & \text{ if } x\equiv 0  \,,\\
r^+_1& \text{ if }x \equiv 1 \,,
\end{cases}
\qquad
r(x, x-1) = \begin{cases} r^-_0& \text{ if  } x\equiv 0 \,,\\
r^-_1 & \text{ if } x \equiv 1\,, 
\end{cases}
\]
for positive constants  $r^\pm _0$, $r^\pm _1$,
where we write $x\equiv 0$ if $x$ is even, and $x\equiv 1$ if $x$ is odd. Then the unperturbed invariant distribution is given by 
\[ \pi(x)= 
\begin{cases} 
  ( r_1^+ + r_1^-) /\cZ &\text{ if } x\equiv 0   \,,\\
  ( r_0^+ + r_0^-) /\cZ   & \text{ if } x \equiv 1\,,
\end{cases}
\]
where $\cZ$ is the normalizing constant $\cZ= (N/2) ( r_0^+ + r_0^-+r_1^+ + r_1^-)$. Moreover the functions $c,\g$ in Theorem \ref{teo_CM} are given by 
\[
c(x)= 
\begin{cases} c_0:=
 r^+_0 + r^-_0 &\text{ if } x\equiv 0\,,\\
 c_1:=r^+_1 + r^-_1  & \text{ if } x\equiv 1\,,
\end{cases}
\qquad
\g(x) =
\begin{cases} \g_0:= r^+_0 - r^-_0 
& \text{ if  } x\equiv 0 \,,\\
\g_1:= r^+_1 - r^-_1  & \text{ if } x \equiv 1 \,.
\end{cases}
\]
The reversed rates are then given by 
\[ 
r^*(x, x+1) =
\begin{cases} (c_0/c_1 )
 r^-_1
  & \text{ if } x\equiv 0 
 \,,\\
 (c_1/c_0)
 r^-_0
  & \text{ if } x\equiv 1\,,
\end{cases}
\qquad
r^*(x, x-1) =
\begin{cases}
(c_0/c_1)
   r^+_1
  & \text{ if } x\equiv 0 
 \,,\\
(c_1/c_0) r^+_0
  & \text{ if } x\equiv 1\,,
\end{cases}
\]
and the function $\Psi $ in \eqref{kinder_cards} is given by
\be\label{istrione1}
\Psi(x) = 
\begin{cases}
(c_0/c_1) \g_1-\g_0 =c_0( \g_1/c_1 -\g_0/c_0) & \text{ if } x\equiv 0\,,\\
(c_1/c_0) \g_0-\g_1= c_1( \g_0/c_0 -\g_1/c_1) & \text{ if } x\equiv 1\,.
\end{cases} 
\en
If $f:\bbT^d_N\to \bbC$ has period 2 (i.e.\ it is constant on even sites and constant on odd sites), then
\be\label{istrione2}
(i\o -\cL^*) f(x) =
\begin{cases} i \o f(0)- c_0 \bigl ( f(1)- f(0) \bigr) & \text{ if } x\equiv 0\,,\\
i \o f(1)- c_1 \bigl ( f(0)- f(1) \bigr) & \text{ if } x\equiv 1\,.
\end{cases}
\en
By comparing \eqref{istrione1} and \eqref{istrione2} we get
\[
(i\o -\cL^*) ^{-1} \psi(x)=
\begin{cases} 
\frac{ c_0 ( \g_1/c_1- \g_0/c_0)}{  i \o +c_0 +c_1} & \text{ if } x\equiv 0\,,\\
\frac{ c_1 ( \g_0/c_0- \g_1/c_1)}{ i \o +c_0 +c_1}  & \text{ if } x\equiv 1\,.
\end{cases}
\]
By \eqref{jabba1} in Theorem \ref{teo_CM} we then get the following expression for the complex mobility constant:
\be\label{fruttolo}
\s(\o)=\frac{c_0 c_1}{c_0+c_1} \left[ 2 + \left(\frac{\g_1}{c_1}- \frac{\g_0}{c_0}\right) \frac{\g_0 - \g_1}{i\o +c_0 +c_1} \right]\,.
\en
Note that, in the spatially homogeneous case $r_1^+=r_0^+=r^+$ and $r_1^-=r_0^-=r^-$, \eqref{fruttolo} reduces to $\s(\o)=r^++r^-$ in agreement with \eqref{frittella}. Moreover, coming back to the general setting,  
 we have reversibility if and only if $r_1^+/r_1^-= r_0^-/r_0^+$, i.e.\ $r_1^+=\a r_0^-$ and $r_1^-= \a r_0^+$ for some $\a>0$. 
  Finally, we point out that one could have computed directly $V_\l(t)$ by finding the distribution  $\pi_{\l,t}$ of the OSS at time $t$  as $\pi_{\l,t}$ must be spatially 2-periodic. In particular, $\pi_{\l,t}$ can be computed from  the continuity equation:
 \be
 \begin{split} \partial_t \pi_{\l,t}(0)  &+ \pi_{\l,t}(0) \left[ e^{\l \cos (\o t) }r_0^++e^{-\l \cos (\o t) }r_0^-\right]\\& -  \pi_{\l,t}(1) \left[ e^{\l \cos (\o t) }r_1^++e^{-\l \cos (\o t) }r_1^-\right]=0
 \end{split}
 \en
 (use also that $ \pi_{\l,t}(1)=1- \pi_{\l,t}(0) $ and that $  \pi_{\l,t}(0) $ is $T$--periodic  for $T=2 \pi /\o$). The computation of $\s(\o)$ by means on Theorem \ref{teo_CM} is, on the other hand, simpler.

\section{Stochastic calculus background}\label{sec:SC}
We collect here some useful facts from the theory of stochastic calculus for processes with jumps. Our discussion is based on \cite{D} and \cite[Chapter~1]{JS}. 

We first prove Lemma \ref{giacinto} for later use:
\begin{proof}[Proof of Lemma \ref{giacinto}] We just prove \eqref{intA_zero}, as the rest of the lemma follows trivially from \eqref{intA_zero}. Defining  $\a(s,x,y):= 0$ if $s>t$ it is enough to prove that 
\begin{equation}\label{intA_uno}
	  \bbE_{x_0}\Big[ \sum _{s \in (0,+\infty)} |\a  (s,  X_{s-}, X_s)| \Big] 
	=
	 \bbE_{x_0} \Big[ \int_0^{+\infty} |\a|_r (s, X_s) ds \Big]   
	\end{equation}
	for each starting point $x_0$ such that  the unperturbed process has a.s. no explosion (this holds for $\nu$--a.a.~$x_0$).
 Let  $\t_1<\t_2<\t_3<\cdots $ be the  jump times of the unperturbed Markov jump process starting at $x_0$. As  a.s. this process does not explode and since $\hat r (x)\in (0,+\infty)$ for all $x\in \cX$, all times $\t_k$ are finite and diverge to $+\infty$.

We have
\begin{multline*}
 \bbE_{x_0} \Big[ \int_0^{\t_1} |\a|_r (s, X_s) ds \Big]  =
 \int_0 ^{+\infty}  dt_1 e^{- \hat r(x_0)t_1 } \hat r(x_0) \int_0 ^{t_1} ds\,| \a|_r (s,x_0)  \\=
 \int_0^{+\infty} ds \, |\a|_r(s,x_0)\int_{s}^{+\infty} dt_1 e^{- \hat r(x_0)t_1 }  \hat r(x_0)= \int_0^{+\infty} ds \, |\a|_r(s,x_0)  e^{- \hat r(x_0)s } \,,
\end{multline*}
while
\begin{equation*}
\begin{split}
  \bbE_{x_0}\Big[  |\a  (\t_1,  X_{\t_1-}, X_{\t_1})| \Big] & = \int_0^{+\infty} ds\, e^{- \hat r(x_0)s } \hat r(x_0) \int _{\cX}  r(x_0,dx_1)\frac{1}{\hat r(x_0)} |\a(s, x_0,x_1)| \\
&  =  \int_0^{+\infty} ds \,e^{- \hat r(x_0)s }  |\a|_r(s,x_0)  \,.
\end{split}
  \end{equation*}
  The above results imply that $\bbE_{x_0} \big[ \int_0^{\t_1} |\a|_r (s, X_s) ds \big]  =\bbE_{x_0}\big[  |\a  (\t_1,  X_{\t_1-}, X_{\t_1})| \big]$. By  conditioning on $\t_k, X_{\t_k}$, we then get
      \[
      \bbE_{x_0} \big[ \int_{\t_k}^{\t_{k+1}} |\a|_r (s, X_s) ds \big]=    \bbE_{x_0} \big[  |\a  (\t_{k+1},  X_{\t_{k+1}-}, X_{\t_{k+1}})| \big]
  \]
for all $k\geq 0$, where $\t_0:=0$. By summing among $k\geq 0$ and using that $\t_k \to +\infty$  we get \eqref{intA_uno}.
\end{proof}

\subsection{Martingales and local martingales} In this subsection and in the next one we let  $[0,t]$ be the time observation window, which in later applications will be the time window of the perturbed Markov jump process. 
Let us denote by $(\Omega , \cF^0 , \bbP_\nu )$ the probability space on which the unperturbed Markov process $X_{ [0,t]}$ is defined. 
Denote by $(\cF_s^0)_{s\in [0,t]}$ the natural filtration associated to it, that is $\cF_s^{0}$ is the smallest $\sigma$--algebra that makes the random variables $\{ X_u : u\leq s \}$ measurable. We can make this into a right--continuous filtration $(\cF_s)_{s\in [0,t]}$ that satisfies the so called \emph{usual conditions} \cite{JS} by setting $\cF_t := \sigma (\cF^0_t , \cN )$ and, for $s\in [0,t)$,  $\cF_s := \lim_{u \searrow s}  \sigma ( \cF_u^0 , \cN )$, where in general $\s ( \cF^0_s , \cN )$ is the smallest $\s$-algebra containing both $  \cF_s ^0 $ and $\cN$, and $\cN$ is the collection of all subsets of sets in $\cF^0$ with $\bbP_\nu$--measure zero. Similarly we define $\cF:=\s( \cF^0, \cN)$. Then $(\cF_s)_{s\in [0,t]}$ is right--continuous, $\cF_s\subset \cF$  and $\cF_0 \supseteq \cN$. 
We can therefore think of the unperturbed Markov jump process as being defined on the filtered probability space $(\Omega , \cF , (\cF_s )_{s \in [0,t]} , \bbP_\nu )$, where we keep the notation $\bbP_\nu$ for the probability measure on $(\O,\cF)$ given by the completion of  the original $\bbP_\nu$, in particular giving zero mass to the sets in $\cN$. We remark that $\Omega $ can be $D([0,t] , \cX )$, in which case $\bbP_\nu$ coincides with the law of the unperturbed process.

A c\`adl\`ag adapted process $M = (M_s)_{s\in [0,t]}$ on the filtered probability space \linebreak $( \O , \cF , (\cF_s)_{s\in [0,t]} , \bbP_\nu )$ is said to be a \emph{martingale} if $M_s$ is integrable and $\bbE_\nu [ M_s | \cF_u ] = M_u $ almost surely, for all $0\leq u \leq s\leq t$. It is said to be a \emph{local martingale} if there exists a non--decreasing sequence  $(T_n)_{n\geq 0}$ of  stopping times with respect to the filtration $(\cF_s)_{s\in [0,t]}$  such that $T_n  \to t $ almost surely as $n\to \infty$, and the stopped process  $(M^{T_n}_s)_{s\in [0,t]}$ defined by $M^{T_n}_s = M_{s \wedge T_n}$ is a martingale 
 for all $n\geq 0$. We recall that a stopping time $T$ with respect to the filtration $(\cF_s)_{s\in [0,t]}$ is a random time such that $\{ T\leq s \} \in \cF_s $ for all $s \in [0,t]$.

A sufficient condition for a local martingale $(M_s)_{s\in [0,t]}$ to be a true martingale is given by the following result. 
\begin{Lemma}\label{le:sufficientmart}
Let $M=(M_s)_{s\in [0,t]}$ be a local martingale, and assume that there exists an integrable random variable $Y$ such that $|M_s| \leq Y$ for all $s \in [0,t]$. Then $M$ is a true martingale. 
\end{Lemma}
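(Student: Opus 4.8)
The plan is to verify directly the three defining properties of a martingale for $M$: adaptedness, which holds by assumption; integrability of each $M_s$; and the projection identity $\bbE_\nu[M_s\mid \cF_u]=M_u$ for all $u\le s$ in $[0,t]$. The guiding idea is to transfer the martingale identity from the stopped processes $M^{T_n}$ to $M$ by letting $n\to\infty$, using the uniform domination $|M_s|\le Y$ with $Y$ integrable to justify the passage to the limit.

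First I would dispatch integrability: since $|M_s|\le Y$ for every $s\in[0,t]$ and $Y\in L^1(\bbP_\nu)$, each $M_s$ is integrable, so all the conditional expectations below are well defined.

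Next, fix $u\le s$ in $[0,t]$. By hypothesis each stopped process $M^{T_n}=(M_{r\wedge T_n})_{r\in[0,t]}$ is a true martingale, so $\bbE_\nu[M_{s\wedge T_n}\mid\cF_u]=M_{u\wedge T_n}$ for every $n$. I would then send $n\to\infty$. Because $T_n\to+\infty$ almost surely and the horizon is finite, for $\bbP_\nu$-a.e.\ path there is an index $N$ with $T_n>t$ for all $n\ge N$, whence $s\wedge T_n=s$ and $u\wedge T_n=u$ eventually; in particular $M_{s\wedge T_n}\to M_s$ and $M_{u\wedge T_n}\to M_u$ almost surely.

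Finally, to pass the limit through the conditional expectation on the left-hand side I would invoke the conditional dominated convergence theorem: the bound $|M_{s\wedge T_n}|\le Y$ holds uniformly in $n$ (since $s\wedge T_n\in[0,t]$) with $Y$ integrable, so $\bbE_\nu[M_{s\wedge T_n}\mid\cF_u]\to\bbE_\nu[M_s\mid\cF_u]$ in $L^1$. Combining this with the almost sure convergence $M_{u\wedge T_n}\to M_u$ of the right-hand side gives $\bbE_\nu[M_s\mid\cF_u]=M_u$, which is the desired martingale property. The only step requiring genuine care is this last interchange of limit and conditional expectation; everything else is routine once the uniform integrable domination has been identified as the correct tool.
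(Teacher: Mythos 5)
Your proof is correct. The paper itself gives no argument here: it simply defers to \cite[Proposition~1.47]{JS}, and what you have written is precisely the standard proof of that cited fact. All three steps are sound with the paper's definition of local martingale (the stopped processes $M^{T_n}$ are genuine martingales and $T_n\to+\infty$ a.s.): integrability of each $M_s$ follows from $|M_s|\le Y$; the identity $\bbE_\nu[M_{s\wedge T_n}\mid\cF_u]=M_{u\wedge T_n}$ passes to the limit because $T_n>t$ eventually almost surely (so both sides stabilize pathwise), and the interchange of limit and conditional expectation on the left is justified by conditional dominated convergence with the uniform bound $|M_{s\wedge T_n}|\le Y$. The only thing you gain by spelling this out, versus the paper's citation, is a self-contained verification; the only thing to watch is the one you already flagged, namely that the domination must be uniform over the whole random time $s\wedge T_n\in[0,t]$, which is exactly what the hypothesis $|M_r|\le Y$ for all $r\in[0,t]$ provides.
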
 
This is a straightforward corollary of \cite[Proposition 1.47-(c)]{JS} together with the observation that under the assumptions of Lemma \ref{le:sufficientmart} the process $M$ is of class (D), as defined in \cite[Definition 1.46]{JS}.

A local martingale is said to be continuous if its trajectories are continuous. In what follows we will work with a class of martingales which is orthogonal, as defined below, to that of continuous local martingales, namely the class of purely discontinuous local martingales.

\subsection{Purely discontinuous local martingales} \label{sec:purely}
Recall from \cite[Definition 4.11(a)]{JS} that two local martingales are said to be orthogonal if their product is a local martingale. A local martingale equal to zero at time $t=0$ and which  is orthogonal to all continuous local martingales is called a \emph{purely discontinuous local martingale} (\cite[Definition 4.11(b)]{JS}). If, in addition, it is a true martingale, we call it a \emph{purely discontinuous martingale}.

Let $\a : [0, t] \times \cX \times \cX \to \bbR $ be a  measurable function such that 
	\begin{equation}\label{welldef}
	 \sum _{s \in (0,t]} |\a (s,  X_{s-}, X_s)| < \infty , 
	  \qquad 
	  \int_0^{t} |\a |_r  (u,X_u ) du < \infty 
	  \end{equation}
$\bbP_\nu $--almost surely. 
Then similarly to  \cite[Theorem (A4.9), p.~272]{D} 
the process $(M_s)_{s\in [0,t]}$ defined by 
	\begin{equation}\label{E:M}
	\begin{split}
	 M_s & = 
	\sum _{u \in (0,s]} \a (u,  X_{u-}, X_u)
	- \int_0^s  \a_r  (u,X_u ) du
	\end{split} 
	\end{equation}
is a local martingale (see Appendix \ref{appendix} for the details).  
Moreover, it is of finite variation, since it is made of a piecewise constant term and a Lebesgue integral term. 
Since all local martingales starting at zero and of finite variation are purely discontinuous (see Lemma 4.14(b) of \cite{JS}), it  follows that $(M_s)_{s\in [0,t]}$ is a purely discontinuous local martingale. 


For the next lemma recall Definition \ref{def:Pint}. 
\begin{Lemma}\label{L:true}
Let $M=(M_s)_{s\in [0,t]}$ be a local martingale of the form \eqref{E:M} with $\alpha $ satisfying \eqref{welldef}. If $\alpha$ is $\bbP_\nu$--integrable (for example if $\alpha $ satisfies Condition $C[\nu,t]$), then $M$ is a martingale. 
\end{Lemma}
\begin{proof} It is enough to apply  Lemma \ref{le:sufficientmart} with 
\[ Y:= \sum _{u \in (0,t]} |\a| (u,  X_{u-}, X_u)
	+ \int_0^t | \a|_r  (u,X_u ) du\,.
	\qedhere \]
\end{proof}

Let $(N_s)_{s\in[0,t]}$ be another such local martingale, with 
	\[ N_s = \sum _{u \in (0,s]} \gamma  (u,  X_{u-}, X_u)
	- \int_0^s  \gamma_r (u,X_u) du\]
where $\gamma : [0,t] \times \cX \times \cX \to \bbR$ satisfies \eqref{welldef} with $\gamma$ in place of $\alpha$. We define the covariation process $([M,N]_s)_{s\in[0,t]}$ by setting 
	\[[M,N]_s =  \sum_{u\in (0,s] } \a (u,X_{u-} , X_u ) \g (u,X_{u-} , X_u )
	\]
(cf.\ Definition 4.45 and Theorem 4.52 in \cite{JS}, and use that the continuous martingale part, defined in Theorem 4.18 in \cite{JS}, of purely discontinuous local martingales is identically zero). It then follows from Proposition 4.50 of \cite{JS} that 
the process $(M_s N_s - [M,N]_s )_{s\in[0,t]}$ is again a local martingale.

\begin{Remark}\label{fiorellino} In this subsection and in the previous one we have worked  with the unperturbed Markov jump process up to time $t$. Equivalently, one could deal with this process defined for  all times, and in particular  defined on the filtered probability space $(\O,\cF, (\cF_s)_{s\geq 0}, \bbP_\nu)$ where $\cF_s := \lim_{u \searrow s}  \sigma ( \cF_u^0 , \cN )$  for all $s \geq 0$. We point out that in this case, in the definition of local martingale, one has to take a sequence of stopping times $T_n$ such that $T_n \to +\infty$ almost surely.  Then, given $\a: [0,t]\times \cX\times \cX\to\bbR$, in order to define processes as $M_s$ in  \eqref{E:M} for all times $s\geq 0$, one has just to extend $\a$ to $\bbR_+\times \cX\times \cX$  by setting $\a(s,\cdot, \cdot)=0$ for times $s > t$.
 \end{Remark}

%
%
%

\section{Proof of Lemma \ref{ananas78X}}\label{sec:preliminary}
We start with a preliminary lemma. 
\begin{Lemma}\label{mango25X}
Let  $F(s,y,z)  $  be a measurable function on $[0,t]\times \cX\times \cX$ such that 
\be\label{jeegX} (e^F)_r(s,y)= \int _{\cX}  e^{F(s, y,z)} r(y,dz)<+\infty \text{ for all } s\in [0,t]\,, \; y\in 
 \cX
 \en
 and  define  $\bbM_t^F : D_f ([0,t], \cX) \to \bbR$ as 
\be
  \bbM_t^F\bigl(\xi_{[0,t]} \bigr):= \exp\bigl\{ \sum_{s\in[0,t]}
  F(s, \xi_{s-}, \xi_s)  
  -\int_0^t (e^F-1 )_r(s, \xi_s)ds)  \bigr\}\,.
  \en
 Then,  $\bbE_x [\bbM_t^F(X_{[0,t]})]\leq 1 $ for $\nu$--a.a.~$x$.
 \end{Lemma}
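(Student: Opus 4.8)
The plan is to recognize $\bbM_t^F$ as the Dol\'eans--Dade stochastic exponential of a purely discontinuous local martingale. Being an exponential, this exponential is a \emph{nonnegative} local martingale, and I will then conclude $\bbE_\nu[\bbM_t^F(X_{[0,t]})]\leq 1$ by Fatou's lemma applied along a localizing sequence, using that its initial value is $1$.

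First I would set $\a := e^F-1$, so that $\a_r=(e^F-1)_r$, and check that $\a$ satisfies the standing integrability requirements \eqref{welldef} of Section \ref{sec:SC}. Under the standing non--explosion assumption the paths of $X$ lie in $D_f([0,t],\cX)$ almost surely, so on $[0,t]$ the process visits finitely many states and makes finitely many jumps; moreover the hypothesis $(e^F)_r(s,y)<+\infty$ forces the pointwise bound $|\a|_r(s,y)\leq (e^F)_r(s,y)+\hat r(y)<+\infty$ (since $|e^F-1|\leq e^F+1$). Hence both the jump sum and the time integral in \eqref{welldef} are finite $\bbP_\nu$--a.s. By the results recalled in Section \ref{sec:SC} (based on \cite[Theorem~4.8]{DRoy}), the process
\[
M_s := \sum_{u\in(0,s]} \a(u,X_{u-},X_u) - \int_0^s \a_r(u,X_u)\,du
\]
is then a purely discontinuous local martingale of finite variation, with jumps $\Delta M_u=e^{F(u,X_{u-},X_u)}-1$, so that $1+\Delta M_u=e^{F(u,X_{u-},X_u)}>0$ at every jump.

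Next I would identify $\bbM_s^F$ with the stochastic exponential $\cE(M)_s$. Since $M$ has no continuous martingale part, its stochastic exponential reads $\cE(M)_s = e^{M_s}\prod_{u\in(0,s]}(1+\Delta M_u)\,e^{-\Delta M_u}$. Substituting $M_s=\sum_{u\in(0,s]}\Delta M_u-\int_0^s \a_r(u,X_u)\,du$ together with $1+\Delta M_u=e^{F(u,X_{u-},X_u)}$, the jump terms $\sum_u\Delta M_u$ cancel and one is left exactly with $\exp\{\sum_{u\in(0,s]}F(u,X_{u-},X_u)-\int_0^s(e^F-1)_r(u,X_u)\,du\}=\bbM_s^F$. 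Thus $\bbM^F=\cE(M)$ is a local martingale, nonnegative because it is an exponential, with $\bbM_0^F=1$. Taking a localizing sequence $(T_n)$ with $T_n\uparrow+\infty$ such that each $(\bbM^F_{s\wedge T_n})_s$ is a true martingale, I get $\bbE_\nu[\bbM^F_{t\wedge T_n}]=\bbM^F_0=1$; since $T_n\uparrow+\infty$ a.s.\ forces $t\wedge T_n=t$ eventually, Fatou's lemma yields $\bbE_\nu[\bbM^F_t]\leq\liminf_n\bbE_\nu[\bbM^F_{t\wedge T_n}]=1$.

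The main obstacle is the rigorous justification that $\bbM^F$ is precisely the stochastic exponential of $M$ --- equivalently, that it solves $dZ_s=Z_{s-}\,dM_s$ with $Z_0=1$ --- and that this exponential is a genuine local martingale rather than merely a local supermartingale; this rests on the finite-variation structure of $M$, on the strict positivity $1+\Delta M_u>0$, and on the integrability verified above, all guaranteed by the assumption $(e^F)_r<+\infty$. No equality is expected in general: the localization yields only the one-sided bound, which is exactly why the statement asserts $\bbE_\nu[\bbM_t^F]\leq 1$ rather than an identity.
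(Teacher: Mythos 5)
Your proof is correct, but it takes a genuinely different route from the paper's. The paper proves the lemma by a change-of-measure argument: it introduces the auxiliary (possibly explosive) time-inhomogeneous Markov jump process $X^F$ with tilted kernel $r^F(s,y,dz)=e^{F(s,y,z)}r(y,dz)$, observes that $\bbM_t^F$ is the Radon--Nikodym derivative of the law of $X^F_{[0,t]}$ restricted to non-exploding paths with respect to the law of $X_{[0,t]}$, and concludes because that restricted law has total mass at most one; the deficit $1-\bbE_\nu[\bbM_t^F]$ is thus identified with the explosion probability of the tilted process, which is exactly the interpretation exploited later in Theorem \ref{th:explosionX}. You instead recognize $\bbM^F$ as the Dol\'eans--Dade exponential $\cE(M)$ of the purely discontinuous local martingale built from $\a=e^F-1$, so that nonnegativity plus localization and Fatou give the supermartingale bound; this is precisely the machinery the paper itself deploys in Section \ref{sec:explosion} for the special case $F=\l g$, so your argument in effect unifies the two proofs and avoids constructing the auxiliary process, at the cost of losing the probabilistic meaning of the deficit. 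One small caveat: to place $M$ in the framework of \eqref{welldef} you need $\int_0^t|\a|_r(u,X_u)\,du<+\infty$ a.s., and the hypothesis only gives finiteness of $(e^F)_r(s,y)$ for each fixed $s$, not integrability in $s$ over a holding interval; on the exceptional event where this integral is infinite one has $\bbM_t^F=0$ anyway (since $(e^F-1)_r\geq-\hat r$ is bounded below along a path visiting finitely many states), so the bound survives, but strictly speaking the local-martingale construction should be restricted to, or the statement augmented by, that integrability --- a technicality at the same level of informality as the paper's own construction of $X^F$, and harmless in all the applications (where $F$ is either time-independent or has $(e^F)_r$ bounded in $s$).
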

Note that $1_r(s,y)= \hat r(y)<+\infty$, hence $(e^F -1)_r$ is well defined and finite by \eqref{jeegX}.
 \begin{proof}
Consider the time-inhomogeneous Markov jump process $X^F_{ [0,t]}$ on $\cX$  with transition kernel $r^F_s(y,dz):= r(y,dz) e^{F(s, y,z)}$,  defined up its explosion time $\t_\infty$. 
Given a  Borel set $B \subset D_f([0,t],\cX)$,  let $P^F_{x,t}(B) $ be  the probability that  $X^F_{[0,t]}\in B$ when starting at $x$  (note that the event  $\{ X^F_{[0,t]}\in B\}$  implies that  $X^F_{[0,t]}$ does not explode in $[0,t]$). Call $P_{x,t}(B)$ the analogous probability for $X_{[0,t]}$.  $P^F_{x,t}$ and $P_{x,t}$  are measures on $D_f([0,t],\cX)$. Take $x$ such that a.s. the \rosso{unperturbed} Markov process  starting at $x$ does not explode (\rosso{thus implying that $P_{x,t}$} is a probability measure). Note that this holds for $\nu$--a.a.~$x$ by our main Assumption in Section~\ref{sec_kalush}.
 Then one easily checks (as for \eqref{ugo}) that  $\bbM_t^F $ is the Radon--Nikodym derivative 
 of the measure $P^F_{x,t} $  w.r.t. $P_{x,t}$. As $P^F_{x,t}$ has total mass bounded by $1$, we have $\bbE_x\big[\bbM_t^F(X_{[0,t]}) \big]= P^F_{x,t} \big  ( D_f([0,t],\cX)\big) \leq 1 $. 
\end{proof}
 \begin{proof}[Proof of Lemma \ref{ananas78X}]
 We  fix $\d>0 $ and
 set $F(s,y,z):=\ln ( 1+\d  |\a|(s,y,z))$. Then 
 $(e^F)_r(s,y)= \hat r(y) + \d | \a|_r (s,y)  
\leq (1+\d \|\a\|_\infty) \hat r(y)$. In particular, condition \eqref{jeegX} is satisfied. 
 By Lemma \ref{mango25X} we then get that $\bbE_\nu[ \bbM_t ^F( X_{[0,t]})]\leq 1 $.
 Since,
  $(e^F-1)_r(s,y)=\d  | \a|_r (s,y)$, $ \bbM_t ^F( \xi_{[0,t]})$ can be rewritten as 
  \be
  \bbM_t ^F( \xi_{[0,t]})=  \exp\bigl\{ \sum_{s\in(0,t]}
  F(s, \xi_{s-}, \xi_s)  
  -\d \int_0^t |\a|_r(s, \xi_s)ds \bigr\} .
  \en
  As $\ln (1+ x) \geq x/2$ for $x\in [0,1]$, by taking $\d$ small such that $\d \|\a\|_\infty\leq 1$ we get 
  that 
  \be\label{sammy2} \bbE_\nu\left[ \rosso{\exp\{N_t ( X_{[0,t]})\}}\right]  \leq \bbE_\nu\left[ \bbM_t ^F( X_{[0,t]})\right]\leq 1
  \en 
  where 
  \be
  N_t(\xi_{[0,t]}) :=  \frac{\d}{2} \sum_{s\in(0,t]}
|\a|(s, \xi_{s-}, \xi_s)  
  - \d \int_0^t |\a|_r(s, \xi_s)  ds  \,.
    \en
We now  observe that, by Schwarz inequality, \eqref{papayaX} and \eqref{sammy2}, for $\d\leq \theta$  it holds \be
\begin{split}
\bbE_\nu \big[ e^{\frac{\d}{4}  \sum_{s\in (0,t]}
  |\a|(s, X_{s-}, X_s)  }  \big]&= \bbE_\nu \big[
  e^{ \frac{1}{2}N_t (X_{[0,t]} ) + \frac{   \d}{2} \int_0^t |\a|_r(s, X_s)ds }\big]\\
  & \leq  \bbE_\nu \big[
  e^{N_t (X_{[0,t]})} \big]^\frac{1}{2} \bbE_\nu \big[
 e^{  \d \int_0^t |\a|_r(s, X_s)ds }\big]^\frac{1}{2}<+\infty\,.
  \end{split}
  \en
By the above considerations,  \eqref{kinney2} holds for $\g:=\d/4$ and in particular for $\g:=  \min\{ \|\a\|^{-1}_\infty , \theta\}/4 $.
  \end{proof}

\section{Proof of Lemma \ref{tik_tok} and its extension} \label{dim_lemma_tik_tok}
The following result reduces to Lemma \ref{tik_tok} when $U_n=U$ for all $n$:
\begin{Lemma}\label{tik_tok_ext} 
For a given function $\alpha : [0,t] \times \cX \times \cX \to \bbR$ suppose that there exist a sequence of measurable real functions $U_n $ on $\cX$ and positive  constants $\theta,C,c$  such that
 \begin{itemize}
\item[(i)]  $U_n(x) \geq c  $ for all $x\in \cX$ and $n\geq 1$;
\item[(ii)]   $\int_{ \cX}  U_n(y) r(x,dy) <+\infty$ for all $x\in \cX$ and $n\geq 1$;
 \item[(iii)]   setting  $V_n(x):= -LU_n(x)/U_n(x)$, the sequence of functions $V_n:\cX \to \bbR$ converges pointwise to some function $V:\cX\to \bbR$;
 \item[(iv)]  $V\geq \theta\, |\alpha |_r - C$;
 \item[(v)]  $U_{\rm sup}(x):= \sup_{n\geq 1} U_n(x) <+\infty$ for each $x \in \cX$;
 \item[(vi)]    $\nu [ U_{\rm sup}] <+\infty$.
  \end{itemize}
 Then $\alpha$ satisfies Condition $C[\nu , t]$ with parameter $\theta$.
\end{Lemma}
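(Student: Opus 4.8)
The plan is to realize the desired exponential moment as the expectation of an explicit nonnegative supermartingale built from the Lyapunov functions $U_n$, and then to pass to the limit $n\to\infty$ using (iii)--(iv). Fix $n$ and introduce
\[ M^n_s := U_n(X_s)\,\exp\Big\{ \int_0^s V_n(X_u)\,du \Big\}, \qquad s\in[0,t], \]
with $V_n = -LU_n/U_n$ as in (iii), so that $U_n V_n = -LU_n$ by construction. The key claim is that $M^n$ is a local martingale. Indeed, by \eqref{usignolo} and (ii) the Dynkin process $U_n(X_s)-U_n(X_0)-\int_0^s LU_n(X_u)\,du$ is exactly a purely discontinuous local martingale of the form \eqref{E:M}, with the (time--independent) choice $\alpha(u,x,y)=U_n(y)-U_n(x)$, for which $\alpha_r=LU_n$; the integrability \eqref{welldef} holds $\bbP_\nu$--a.s. because, by non--explosion, $X_{[0,t]}$ visits finitely many states and $\int_\cX U_n(y)\,r(x,dy)<+\infty$ by (ii). Writing $N_s:=\int_0^s V_n(X_u)\,du$, which is continuous and of finite variation, the product rule gives $dM^n_s = e^{N_s}\,d\big(U_n(X_s)\big) + U_n(X_s)\,V_n(X_s)\,e^{N_s}\,ds$, and the two drift contributions cancel since $LU_n + U_n V_n = 0$. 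Hence $M^n$ is the integral of the continuous adapted (so predictable, locally bounded) process $e^{N_s}$ against the purely discontinuous local martingale above, and therefore a local martingale, with the localization provided by the successive jump times of $X$.

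Next I would convert this into a uniform bound. By (i) we have $M^n_s\geq c\,e^{N_s}>0$, so $M^n$ is a nonnegative local martingale, hence a supermartingale; since $\bbE_\nu[M^n_0]=\nu[U_n]\leq \nu[U_{\rm sup}]<+\infty$ by (v)--(vi), we obtain
\[ \bbE_\nu\Big[\exp\Big\{\int_0^t V_n(X_s)\,ds\Big\}\Big]\leq \tfrac1c\,\bbE_\nu[M^n_t]\leq \tfrac1c\,\nu[U_n]\leq \frac{\nu[U_{\rm sup}]}{c}, \]
uniformly in $n$. Along $\bbP_\nu$--a.a. paths the process visits finitely many states in $[0,t]$, so $\int_0^t V_n(X_s)\,ds$ is a finite sojourn--weighted sum of values $V_n(X_s)$, which converges to $\int_0^t V(X_s)\,ds$ by the pointwise convergence (iii). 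Fatou's lemma then gives
\[ \bbE_\nu\Big[\exp\Big\{\int_0^t V(X_s)\,ds\Big\}\Big]\leq \liminf_{n}\bbE_\nu\Big[\exp\Big\{\int_0^t V_n(X_s)\,ds\Big\}\Big]\leq \frac{\nu[U_{\rm sup}]}{c}. \]
Finally, (iv) yields $\int_0^t V(X_s)\,ds\geq \sigma\int_0^t |\alpha|_r(s,X_s)\,ds - Ct$, so multiplying by $e^{Ct}$ produces
\[ \bbE_\nu\Big[\exp\Big\{\sigma\int_0^t |\alpha|_r(s,X_s)\,ds\Big\}\Big]\leq e^{Ct}\,\frac{\nu[U_{\rm sup}]}{c}<+\infty, \]
which is condition $C[\nu,t]$ of Definition \ref{def_papaya} with $\theta=\sigma$.

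I expect the main obstacle to be the rigorous justification that $M^n$ is a local martingale: the exponential (Feynman--Kac) tilt and the product--rule cancellation are formally immediate, but because $U_n$ need not be bounded one must localize along the jump times $T_k$ of $X$, check \eqref{welldef} under (i)--(ii) to place the Dynkin term in the framework of Section \ref{sec:SC}, and invoke nonnegativity (Fatou applied along the localizing sequence) to pass from the local martingale property to the genuine supermartingale inequality. Once this step is secured, the uniform bound, the limit $n\to\infty$, and the reduction to condition $C[\nu,t]$ are routine.
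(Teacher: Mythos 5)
Your proof is correct, and its overall architecture matches the paper's: build from $U_n$ a multiplicative functional whose expectation is controlled, deduce the uniform bound $\bbE_\nu[\exp\{\int_0^t V_n(X_s)\,ds\}]\le \nu[U_{\rm sup}]/c$, pass to the limit in $n$ by Fatou using (iii), and then invoke (iv) to land on condition $C[\nu,t]$. The one genuine difference is how the key inequality is justified. The paper applies its Lemma \ref{mango25X} with $F_n(y,z)=\ln\bigl(U_n(z)/U_n(y)\bigr)$: the telescoping sum gives $\bbM_t^{F_n}=\frac{U_n(X_t)}{U_n(X_0)}\exp\{-\int_0^t \frac{LU_n}{U_n}(X_s)\,ds\}$, and the bound $\bbE_x[\bbM_t^{F_n}]\le 1$ comes from interpreting $\bbM_t^{F_n}$ as the Radon--Nikodym derivative of the (possibly explosive, hence sub-probability) $F_n$-tilted process. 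Your $M^n_t$ is exactly $U_n(X_0)\,\bbM_t^{F_n}(X_{[0,t]})$, but you establish the same inequality intrinsically: the Dynkin process for $U_n$ is a purely discontinuous local martingale of the form \eqref{E:M} (with \eqref{welldef} secured by (i)--(ii) and non-explosion), the product rule with the continuous finite-variation factor $e^{N_s}$ cancels the drift, and nonnegativity turns the local martingale into a supermartingale. Both justifications are sound; the paper's buys brevity by reusing a lemma already needed elsewhere, while yours is self-contained, makes the Feynman--Kac structure explicit, and avoids the change-of-measure interpretation entirely. Your closing worry about localization is handled exactly as you indicate (jump times of $X$ as the localizing sequence, Fatou along it), so there is no gap.
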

\begin{proof} We use Lemma \ref{mango25X} with the function $F_n(y,z) := \ln (U_n(z)/U_n(y) )$, which is well defined by Item (i). Moreover $(e^{F_n})_r(y)= U_n(y)^{-1} \int_{\cX} r(y,dz) U_n(z) <+\infty$ due to Items (i) and (ii). By observing that 
	\[ \exp\Big\{\sum_{s\in(0,t]} F_n (s, \xi_{s-}, \xi_s)  \Big\}= \frac{U_n(X_t)}{U_n(X_0)} \]
and  $ (e^{F_n}-1)_r=LU_n/U_n$, we get   that
 \be\label{zittino}
\bbM_t^{F_n}(X_{[0,t]})= \frac{U_n(X_t)}{ U_n(X_0)} \exp\left\{  - \int_0^t  \frac{LU_n}{U_n } (X_s ) ds  \right\} \geq  \frac{c}{ U_{\rm sup}(X_0)} \exp\left\{ \int_0^t V_n (X_s) ds  \right\} \,. 
 \en
 To get the above lower bound we used Item (i)   and the definitions of $U_{\rm sup},V_n$. As a byproduct  of 
 \eqref{zittino} with the bound  $\bbE_x \bigl[\bbM_t^{F_n}(X_{[0,t]})\bigr]\leq 1$ (which holds for $\nu$--a.a.~$x$ by Lemma \ref{mango25X}) we get that
 	\[ \bbE_x \Big[ \exp\left\{ \int_0^t V_n (X_s) ds  \right\} \Big] \leq \frac{U_{\rm sup} (x)}{c} \]
for $\nu$--a.a.~$ x \in \cX$.  By taking the limit $n\to \infty$ (using Item (iii) and Fatou's lemma) we get $\bbE_x
 \left[ 
 e^{\int_0^t V(X_s) ds } 
 \right]
 \leq
  U_{\rm sup}(x)/c$. By combining the above bound with 
  Item (iv), we get that
  \be\label{crodino}
 \bbE_x\left[ e^{\theta \int_0^t |\alpha |_r (s, X_s) ds  }\right]\leq e^{ Ct} \frac{ U_{\rm sup}(x)}{c} 
 \en
for $\nu$--a.a.~$x\in \cX$. Finally, by  averaging the above bound with respect to $\nu$ and using  Item   (vi), we 
 gather that 
 	\[ \bbE_\nu \left[ e^{\theta \int_0^t |\alpha |_r (s, X_s) ds  }\right]
 	\leq e^{ Ct} \frac{ \nu [ U_{\rm sup}]}{c} < \infty . \]
	This in particular implies  \eqref{papayaX}.
\end{proof}

\section{Proof of Theorem \ref{th:explosionX}} \label{sec:explosion}
To start with, recall that \eqref{ugo} has been obtained under the assumption that the perturbed process does not explode in $[0,t]$ $\bbP_\nu$--a.s.. Nevertheless, the same identity remains valid when dropping the non--explosion assumption by replacing  $\bbE_{\nu}\bigl[  F(X^\l_{[0,t]} )  \bigr]$ in the left hand side of \eqref{ugo} by $\bbE_{\nu}\bigl[  F(X^\l_{[0,t]})\mathds{1}(\t_\infty^\l  >t )   \bigr]$. We recall that  $\t_\infty^\l $ denotes the explosion time of the perturbed process. Then, taking $F\equiv 1 $, 
	\[ \bbP_{\nu}\bigl(  \t_\infty^\l  >t    \bigr)= \bbE_\nu  \bigg[ 
e^{  \int_0^t \bigl[ \hat r (X_s)- \hat{r}^\l_s ( X_s)\bigr] ds} 
  \prod _{\substack{s \in (0,t]: \\ 
  X_{s-} \not = X_s} }e^{\l g(s,X_{s-} , X_s)}\bigg] , \]
and the non--explosion of the perturbed process up to time $t$  becomes equivalent to  
\be\label{midnight}
 \bbE_\nu  \bigg[ 
e^{  \int_0^t \bigl[ \hat r (X_s)- \hat{r}^\l_s ( X_s)\bigr] ds} 
  \prod _{\substack{s \in (0,t]: \\ 
  X_{s-} \not = X_s} }e^{\l g(s,X_{s-} , X_s)}\bigg]=1\,.
\en
Below we prove that \eqref{midnight} holds for $\l$ small enough by observing that the l.h.s.\ is the expectation of an exponential martingale associated to the change of measure $\bbP_{\nu} \mapsto \bbP_\nu^\l $. Our discussion is based on  stochastic calculus (see \cite{JS} and Section \ref{sec:SC} above). 

For $s\in [0,t]$ set 
	\[ Y_s := e^{  \int_0^s \bigl[ \hat r (X_u)- \hat{r}^\l_u ( X_u)\bigr] du} 
  \prod _{\substack{u \in (0,s]: \\ 
  X_{u-} \not = X_u} }e^{\l g(u,X_{u-} , X_u)} , \]
we aim to show that $\bbE_\nu [Y_t ] =1$. It is enough to prove   that the process $Y:=(Y_s)_{s\in [0,t]}$  is a martingale, since this implies that  
 $\bbE_\nu [Y_t] = \bbE_\nu [Y_0] = 1$. We will divide the proof that $Y$ is a martingale in three parts: firstly we introduce  in \eqref{argentina} a process  $Z:=(Z_s)_{s\in [0,t]}$ and show that it is a  purely discontinuous local martingale, secondly we show that 
 $Y$      is the stochastic exponential of    $Z$   and it is a local martingale; thirdly we prove that   $Y$ is uniformly integrable and therefore it is a martingale. It is only in the last part that we will use Condition $C[\nu,t]$ after performing the  Taylor expansion  $e^{\lambda g(u, X_u , y)} -1 \approx \lambda g(u, X_u , y) $ for $\l$ small (this explains why the condition concerns  the exponential moments on  $g$ and not of $e^{\l g}$).

\smallskip

$\bullet$ The process $Z=(Z_s)_{s\in [0,t]}$ mentioned above is defined as 
	\be \label{argentina} Z_s: = \sum _{\substack{u \in (0,s]} } ( e^{\l g(u,X_{u-} , X_u)} -1) - \int_0^s \bigl[ \hat r^\l_u (X_u)- \hat{r} ( X_u)\bigr] du
	\,. \en
	We  claim that  $Z$  is a purely discontinuous local martingale. 	To prove our claim we take $\alpha (u,x,y) := e^{\l g(u,x,y)} -1 $ and observe that  $\|\a\|_\infty<+\infty$ as $\|g\|_\infty <+\infty$. Since $Z_s=\sum _{u \in (0,s]} \a (u,  X_{u-}, X_u)
	- \int_0^s  \a_r  (u,X_u ) du$ and  by the  discussion at the beginning of Section \ref{sec:purely}, to show that $Z$ is a purely discontinuous local martingale we just need to check that $\a$ satisfies condition \eqref{welldef}. Since $\sum _{s \in (0,t]} |\a (s,  X_{s-}, X_s)| $ can be bounded by $\|\a\|_\infty$ times the total number of jumps in $[0,t]$, and since the latter is $\bbP_\nu$--a.s. finite as the unperturbed process has no explosion $\bbP_\nu$--a.s., we conclude that $\sum _{s \in (0,t]} |\a (s,  X_{s-}, X_s)| <\infty$ $\bbP_\nu$--a.s.. Now it remains to prove that  $\int_0^t |\a |_r  (u,X_u ) du < \infty $ $\bbP_\nu$--a.s..  To this aim we observe that 
\[
\int_0^t |\a |_r  (u,X_u ) du \leq \|\a\|_\infty \int_0^t du \int _\cX r (X_u ,dy ) =\|\a\|_\infty \int_0^t du \, \hat r (X_u)\,.
\]
Since $\bbP_\nu$--a.s.\ the trajectory $(X_u)_{u \in [0 ,t]}$ visits a finite number of states   (again as the  unperturbed process does not explode), the last integral is
  finite $\bbP_\nu$--a.s., thus concluding the check of \eqref{welldef} and therefore the proof of our claim.

\medskip

$\bullet$ We now  show that $Y$    is the stochastic exponential of   $Z$ and it is a local martingale. The first property means that $Y$ 
 is the unique (up to indistinguishability) adapted and c\`adl\`ag solution in $[0,t]$ to the SDE 
	\[ \begin{cases}
	dY_s = Y_{s-} dZ_s \\
	Y_0 = 1, 
	\end{cases} \]
where $Y_{s-} = \lim_{u\nearrow s} Y_u$. 
Indeed, by Theorem 4.61 of \cite{JS}, the stochastic exponential of $Z$ is given for $s \in [0,t]$  by 
	\[ \begin{split} 
	\mathcal{E} (Z)_s & = e^{ Z_s - Z_0 } \prod _{\substack{u \in (0,s]: \\ 
  Z_{u-} \not = Z_u} } ( 1+ \Delta Z_u ) e^{- \Delta Z_u } 
  \end{split} \]
where $\Delta Z_s = Z_{s} - Z_{s-} $ denotes the jump of the process $Z$ at time $s$ (which vanishes if $s$ is not a jump time of the process $X$). Since 
$ \Delta Z_s = e^{\lambda g(s , X_{s-} , X_s ) } -1 $, 
we find 
	\[ \begin{split} 
	 \mathcal{E} (Z)_s & = \exp \Big\{ Z_s +  \sum _{u \in (0,s] } \big[  \lambda g(u, X_{u-} , X_u )   - ( e^{\lambda g(u , X_{u-} , X_u ) } -1 )  \big] \Big\}
  \\ & = \exp \Big\{ - \int_0^s \bigl[ \hat r^\l_u (X_u)- \hat{r} ( X_u)\bigr] du
  + \lambda \sum _{u \in (0,s]} g(u, X_{u-} , X_u ) \Big\} = Y_s 
  \end{split} \]
for all $s \in [0,t]$. 
Thus $Y$ is the stochastic exponential of $Z$. 

Now, since $Z$ is a purely discontinuous local martingale, it follows from \cite{JS}, Theorem 4.61(b) that $Y$ is also a local martingale.

\medskip

$\bullet$ We conclude by showing that the process  $Y$ is in fact a true martingale. Due  to Lemma \ref{le:sufficientmart}  it is enough to show that  $0\leq Y_s\leq \bbY$ for all $s \in [0,t]$ and that  $\bbE_\nu[\bbY]<+\infty$, where 
 \[ \bbY:= \exp \Big\{ 2 \lambda  \int_0^t |g|_r (u,X_u )  du
  + \lambda \sum _{u \in (0,t]} |g(u, X_{u-} , X_u )| \Big\} \,.
  \]
   To check that $0\leq Y_s\leq \bbY$ it is convenient to observe that 
  \[
  Y_s = \exp\Big\{  \int_0^s du \int _\cX r(X_u, dy) (1- e^{\l g(u,X_u,y)})+
      \l \sum _{u \in (0,s]}  g(u,X_{u-} , X_u)  \Big\}\,.
 \]  
As a consequence, for any $s\in [0,t]$, we can bound 
	\[ \begin{split}
	0 \leq  Y_s & \leq 
	\exp \Big\{  \int_0^s \int_\cX r (X_u, dy ) \big| e^{\lambda g(u, X_u , y)} -1 \big|   du 
  + \lambda \sum _{u \in (0,s] } |g(u, X_{u-} , X_u )| \Big\} 
  \\ & \leq 
  \exp \Big\{ 2 \lambda  \int_0^t |g|_r (u,X_u )  du
  + \lambda \sum _{u \in (0,t]} |g(u, X_{u-} , X_u )| \Big\} = \bbY  , 
  \end{split}\]
 for  all $\lambda $ small enough such that $\lambda  \| g\|_\infty \leq 1$ (here we used that $|e^x -1| \leq 2 |x| $ for all $x$ with $|x| \leq 1$). 
 To see that $\bbY$ is integrable we note that 
 	\[ \bbE_\nu [\bbY ] \leq  \bbE_\nu \Big[ \exp \big\{ 4 \lambda \int_0^t |g|_r (s,X_s ) ds \big\} \Big]^{1/2} \cdot  \bbE_\nu \Big[ \exp \big\{ 2\lambda \sum_{s \in (0,t]} |g(s, X_{s-} , X_s ) | \big\} \Big]^{1/2} \]
 by Schwarz inequality. Recall that $g$ satisfies Condition $C[\nu, t]$ with some parameter $\theta >0$. It follows that the first expectation in the right hand side is finite provided $4\l \leq \theta$, while by Lemma \ref{ananas78X} the second expectation in the right hand side is finite provided $2\lambda \leq 4^{-1} \min\{ \theta , \| g\|_\infty^{-1} \}$. All the above constraints on $\lambda$ reduce to $\l \leq 8^{-1} \min\{  \theta  ,\; 1/\|g\|_\infty\}$. 
In this case $\bbY$ is integrable and therefore $Y$ is a martingale. This concludes the proof of Theorem \ref{th:explosionX}.

  %
  %
  %

\section{Proof of Proposition \ref{derivoX}} \label{sec:derivoX}
Trivially, by our assumptions, $F(X_{[0,t]})$ is integrable with respect to $\bbP_\nu$. 

In what follows,  $c,C,..$ will denote an absolute  constant  which can change from line to line. Moreover, $q$ will be the exponent conjugate to $p$, i.e. such that $1/p+1/q=1$.  Note that $q\in [1,+\infty)$.
  Let $\xi_{[0,t]}\in D_f([0,t],\cX)$.
 Recall  \eqref{ugo}:
 	\[ \bbE_{\nu}\Big[ F(X^\l_{[0,t]} )  \Big]=\bbE_{\nu}  \Big[ F(X_{[0,t]}) 
e^{ \cR_\l ( X_{[0,t]} )}\Big] \]
with 
\[ 
\cR_\l( \xi_{[0,s]} \bigr) :=- \cA_\l \bigl( \xi_{[0,s]} \bigr)  = \int _0^t ds \int_\cX r(\xi_s,dy) \left(1- e^{\l g(s, \xi_s,y)}\right)  + \l 
 \sum _{s} g(s, \xi_{s-}, \xi_s)  \,.
\]
From now on we restrict to $\l$ small enough that $\l \|g\|_\infty \leq 1/2$.
 As 
  $|1-e^x +x| \leq  c  x^2 $ for $|x|\leq 1$,    it holds
\[
\int_{\cX} r(\xi_s,dy) \bigl|  1- e^{\l g(s,\xi_s,y)}+\l g(s,\xi_s,y)\bigr|\leq c \l^2 (g^2)_r(s, \xi_s)\leq c \|g\|_\infty  \l^2 |g|_r(s, \xi_s)  \,.
\] 
Hence, we get 
\be\label{pannaX}
\big| \cR_\l(\xi_{[0,t]})-\l G_t (\xi_{[0,t]})\big|\leq c  \|g\|_\infty   \l^2 \int_0^t |g|_r(s, \xi_s) ds\,.
\en
As $|e^z-1-z| \leq z^2 e^{|z|}  $  for all $z\in \bbR$, we get $|e^x-e^y| \leq e^y( |x-y|+|x-y|^2 e^{|x-y|})$ for all $x,y\in \bbR$.  Hence, 
\be\label{madame101X}
|e^x-(1+y)| \leq |e^x-e^y|+ |e^y -(1+y)|\leq  e^{|y|}( |x-y|+|x-y|^2 e^{|x-y|}+ y^2 ).
\en
Take  now
\[x:= \cR_\l(X_{[0,t]}) \qquad  \text{ and } \qquad y:=\l G_t(X_{[0,t]})\,.\] As $F \bigl( X_{[0,t] } \bigr)\in L^p(\bbP_\nu)$, by H\"older's inequality and \eqref{ugo}, 
we get
\begin{align}
& \bbE_\nu  \bigl[\, \bigl| F( X^\l_{[0,t]}  ) \bigr|\, \bigr] \leq  \| F( X_{ [0,t]}  )\|_{L^p(\bbP_\nu) } \| e^x\|_{L^q(\bbP_\nu) }\,,\\
&  \bbE_\nu \bigl[\,  \bigl|  F( X_{[0,t]}  )  G_t( X_{[0,t]}  )   \bigr|\,\bigr]  \leq  \| F( X_{ [0,t]}  )\|_{L^p(\bbP_\nu) } \| y/\l \|_{L^q(\bbP_\nu) }\,,\\
& \big| \bbE_\nu  \bigl[\, F( X^\l_{[0,t]}  ) \bigr]  - \bbE_\nu
 \bigl[   F( X_{ [0,t]}  )  \bigr]- \l \bbE_\nu \bigl[   F( X_{[0,t]}  )  G_t( X_{[0,t]}  )  \, \bigr]  \big |\nonumber
\\
& \qquad   =\big| \bbE_\nu \bigl[ F( X_{ [0,t]}  ) \big( e^x-(1+y) \big) \bigr]|  \leq  \| F( X_{ [0,t]}  )\|_{L^p(\bbP_\nu) }  \| e^x-(1+y)\|_{L^q(\bbP_\nu) } 
\,.\label{nuvoletta}
\end{align}
Hence 
to get that all expectations in Proposition \ref{derivoX} are well defined and finite it is enough to prove that $x$, $y$ belong to  $L^q(\bbP_\nu)$, while   to get \eqref{titans2X} it is enough to prove that   the r.h.s.\ of \eqref{madame101X}    has  norm in $L^q(\bbP_\nu)$ bounded by $o(\l)$.
In what follows we focus on the last claim, the proof that $x,y \in L^q(\bbP_\nu)$ can be obtained by similar arguments.

As $g $ is bounded and it satisfies Condition $C[\nu , \l ]$,  by Lemma \ref{ananas78X} we get that $G_t  (X_{[0,t]})$ is upper bounded by the  sum of two non-negative terms, namely $\int_0^t |g|_r (s, X_s) ds$ and $\sum_s |g(s, X_{s-},X_s)|$, each one having finite exponential moment when multiplied by  a suitable small constant (independent from $\l$). By applying Schwarz inequality we then conclude that 
for any $a \in [1,\infty )$ there exists $\l_0(a) < \infty $ such that $e^{|y|}= e^{\l |G_t(X_{[0,t]})| }$ belongs to $L^a ( \bbP_\nu )$ for all $ \l \in [0,\l_0(a) ]$, and moreover 
\be\label{graz}
\sup_{\l\leq \l_0(a)} \| e^{ |y| }\|_{L^a(\bbP_\nu)} <+\infty\,.
\en
 In addition,  since $g$ satisfies Condition $C[\nu , \l ]$ we have that  $\int_0^t |g|_r(s, X_s) ds$   belongs to $L^a(\bbP_\nu)$ for any $a\in[1,+\infty)$.
Moreover,   since  $\l^{-2}  |x-y| \leq c\|g\|_\infty  \int_0^t |g|_r(s, X_s) ds$  (cf. \eqref{pannaX}),  using \eqref{graz} and Schwarz inequality we conclude that \be\label{elisa20}
\sup_{\l \leq \l_0(2q)}  \|  \, \l^{-2}  |x-y| e^{|y|}\, \| _{L^q (\bbP_\nu) } <+\infty\,.
  \en
 
By the same arguments  based on \eqref{pannaX} we also have that   $e^{|x-y|}$ belongs  to $L^a(\bbP_\nu)$ for any $a \in[1,+\infty)$ and  $\l  \leq \l_1(a)$ for some $\l_1(a)>0$,  with 
  \be \label{grazbis}
\sup_{\l \leq \l_1(a)} \| e^{ |x- y| }\|_{L^a(\bbP_\nu)} <+\infty\,.
\en
 Hence, 
 using \eqref{graz}, \eqref{grazbis} and Schwarz inequality, we gather that 
\be\label{elisa21}
\sup_{\l \leq \l_0(4q)\wedge \l_1(4q) }  \|  \,  e^{|y|} e^{|x-y|} \, \| _{L^{2q} (\bbP_\nu) } <+\infty \,.
  \en
By \eqref{pannaX} and the previous observations on $\int_0^t |g|_r(s, X_s) ds$, we get that $\l^{-4} |x-y|^2  $
belongs to $L^{2q}( \bbP_\nu)$ and the norm can be bounded by a $\l$--independent constant.  As a byproduct of 
\eqref{elisa21} and Schwarz inequality, we get that 
\be\label{elisa22}
\sup_{\l \leq \l_0(4q)\wedge \l_1(4q) }  \|  \,\l^{-2} |x-y|^2  e^{|y|} e^{|x-y|} \, \| _{L^{q} (\bbP_\nu) } <+\infty \,.
  \en
 As  $e^{\l_0(1)  |G_t(X_{[0,t]})| }$ belongs to $L^1(\bbP_\nu)$ by \eqref{graz},  we  get that $\l^{-1} y=G_t(X_{[0,t]})$  belongs to $L^a(\bbP_\nu)$ for any $a \in[1,+\infty)$. By taking $a=2q$, by \eqref{graz} and Schwarz inequality, we conclude that 
 \be\label{elisa23}
\sup_{\l \leq \l_0(2q)  }  \|  \,\l^{-2} y^2 e^{|y|}  \, \| _{L^{q} (\bbP_\nu) } <+\infty \,.
  \en
 By combining \eqref{elisa20}, \eqref{elisa22} and \eqref{elisa23} we conclude that the r.h.s.\ of \eqref{madame101X}  has norm in $L^q(\bbP_\nu)$ bounded by $\l^2$ times a $\l$--independent constant.  Hence 
 the r.h.s. of \eqref{nuvoletta} is upper bounded by $C  \| F( X_{ [0,t]}  )\|_{L^p(\bbP_\nu) }  \l  ^2$ for $\l$ small enough.

%
%

%
%

\section{Proof of Theorem \ref{th:JS}} \label{sec:JS1} 
Using that the expectations in the statement of Proposition \ref{derivoX} are well defined and finite and using the bounds in Section \ref{sec:derivoX} as well as the bounds below, it is easy  to prove that expectations in the statement of Theorem \ref{th:JS} are well defined and finite.

The result for case (1) follows directly from \eqref{titans2X} in Proposition \ref{derivoX}. We use it to deduce the linear response formula for case (2). Indeed, by Fubini's theorem,
	\be\label{mango100} \begin{split} 
	&\partial_{\l =0} \bbE_\nu \bigg[  \int_0^t  v(s,X_s^\l ) ds  \bigg] 
	 \\& = \partial_{\l=0} \int_0^t  \bbE_\nu [ v(s,X_s^\l ) ] ds 
	 = \lim_{\l \to 0 } \int_0^t \frac{\bbE_\nu [v(s,X_s^\l )] - \bbE_\nu [v(s,X_s)]}{\l} ds 
	\\ & = \int_0^t \bbE_\nu [ v(s,X_s) G_s (X_{ [0, s]}) ] ds 
	\\ & \quad + \lim_{\l \to 0 } \int_0^t \bigg( \frac{\bbE_\nu [v(s,X_s^\l)] - \bbE_\nu [v(s,X_s)]}{\l} - \bbE_\nu [ v(s,X_s) G_s (X_{ [0, s]}) ] \bigg) ds .
	\end{split}  
	\en
Then, by the last statement in Section \ref{sec:derivoX} applied when  $\| v(s , X_s ) \|_{L^p(\bbP_\nu )} <+\infty$,	we have that for all $s \in [0,t] $ 
	\[ \bigg| \frac{\bbE_\nu [v(s,X_s^\l)] - \bbE_\nu [v(s,X_s)]}{\l} 
	- \bbE_\nu [ v(s,X_s) G_s (X_{ [0, s]}) ] \bigg| 
	\leq C \l \| v(s , X_s ) \|_{L^p(\bbP_\nu )} 
	\]  
which, together with the assumption $\int_0^t \| v(s , X_s ) \|_{L^p(\bbP_\nu )} ds < \infty $, implies that the last term in the chain of equalities \eqref{mango100} vanishes, thus proving the required identity.


	\smallskip 
We now move to case (3). Since 
$\a$ is  $\bbP_\nu$--integrable,  
it satisfies condition \eqref{welldef} $\bbP_\nu$-almost surely. Hence we can use the stochastic calculus techniques for processes with jumps presented in 
Section \ref{sec:SC}. 
Write $G_s $ in place of $G_s (X_{ [0,s]})$, and for $s \in [0,t]$ set
\[
 F_s:= \sum _{u \in (0,s]} \alpha (u,  X_{u-}, X_u)\,.
 \]Note that $F_t= F( X_{[0,t]})$.  Since $F  (X_{ [0,t]}) \in L^p ( \bbP_\nu ) $ for some $p>1$, to get \eqref{res3} we can apply \eqref{titans2X},  hence we just need to show that the r.h.s. of \eqref{res3} equals $\bbE_\nu [ G_t F_t ]$. 

To compute $\bbE_\nu [ G_t F_t ]$ we start by noticing that, since $g$ satisfies condition $C[\nu,t]$, $(G_s)_{s \in[0, t]}$ is a purely discontinuous martingale \rosso{by Lemma \ref{L:true}}. 

Next, we compensate $(F_s)_{s\in [0, t]}$ to make it into a purely discontinuous martingale.
By the $\bbP_\nu$--integrability assumption on $\alpha$, 
we can define 
\[
\bar F_s:= F_s- \int_0^s \int_\cX \alpha (u,X_u , y) r(X_u ,dy) du=F_s- \int_0^s  \alpha_r (u,X_u ) du\,,
\]
and $(\bar F_s)_{s\in [0,t]}$ is a purely discontinuous martingale \rosso{again by Lemma \ref{L:true}}.  
Recall from Section \ref{sec:purely} that 
the covariation process of $G$ and $\bar F$ is given by 
	\[ [G , \bar F ]_s = \sum_{u\in (0,s]  } \alpha (u,X_{u-} , X_u ) g (u,X_{u-} , X_u ) , \]
which is well defined and integrable since $g$ is bounded and $\a$ is $\bbP_\nu $--integrable by \eqref{aa}. 
Then by Proposition 4.50 of \cite{JS}  the process $(G_s \bar F_s - [G , \bar F]_s )_{s \in [0,t]}$ defines a  \rosso{local martingale. We claim that it is a true martingale.  Indeed}, since $g$ is bounded and it satisfies Condition $C[\nu, t]$ (and therefore also \eqref{kinney2} in Lemma \ref{ananas78X}), the assumptions \eqref{aa} on $\a$ together with H\"older's inequality imply that the product
	\[ 
	\bigg( \sum _{s \in (0,t]} |g (s,  X_{s-}, X_s)| + \int_0^t | g |_r (s,X_s )ds
	\bigg) 
	\bigg( \sum _{s \in (0,t]} |\alpha (s,  X_{s-}, X_s)| + \int_0^t | \alpha |_r (s,X_s )ds
	\bigg) 
	\] 
belongs to $ L^1 ( \bbP_\nu )$. It thus follows from Lemma \ref{le:sufficientmart} 
that $(G_s \bar F_s - [G , \bar F]_s )_{s \in [0,t]}$ defines a \rosso{true martingale, thus proving our claim}. 
%
%
\rosso{As a consequence} 
	\be\label{ganger} \begin{split} 
	\bbE_\nu [G_t \bar F_t ] & = \bbE_\nu [ [G , \bar F ]_t ]
	 \\ & = \bbE_\nu \bigg[ \sum_{s\in (0,t] } \alpha (s,X_{s-} , X_s ) g (s,X_{s-} , X_s ) \bigg] 
	 = \int_0^t \bbE_\nu \big[ (\alpha g )_r (s,X_s ) \big] ds , 
	\end{split} \en
where in the second identity we have used that 
	\[ \sum_{u\in (0,s] } \alpha (u,X_{u-} , X_u ) g (u,X_{u-} , X_u )
	- \int_0^s  (\alpha g)_r (u,X_u  )  du \]
defines a martingale for $s\in [0,t]$, as it is of the form \eqref{E:M} and $\a g $ is $\bbP_\nu$--integrable (\rosso{see Lemma \ref{L:true}}).  
To finish the computation of $\bbE_\nu [G_t F_t ]$ we  observe that, by Fubini and the fact that $(G_s)_{s\in[0,t]}$ is a martingale, 
	\be\label{myu} \bbE_\nu \bigg[ G_t \int_0^t  \alpha_r (u,X_u ) du \bigg] = 
	\int_0^t\bbE_\nu \big[\alpha_r (s,X_s ) G_t \big] ds
	= \int_0^t\bbE_\nu \big[\alpha_r (s,X_s ) G_s \big] ds . \en
Putting together \eqref{ganger} and \eqref{myu}, we get 
	\[ \bbE_\nu [G_t F_t ] 
	= \int_0^t \bbE_\nu \big[ (\alpha g )_r (s,X_s ) \big] ds
	+  \int_0^t\bbE_\nu \big[\alpha_r (s,X_s ) G_s \big] ds , \]
which concludes the proof of Theorem \ref{th:JS}.

%
%

\section{Proof of Theorem   \ref{cor:JS}} \label{sec:JS2} 
The decoupled case follows easily from the general case, hence we focus on the first part of the theorem.
We aim to compute the r.h.s. of \eqref{res1}, \eqref{res2} and \eqref{res3}
 in cases $(1)$, $(2)$ and $(3)$ in Theorem \ref{th:JS}.
We achieve this by performing a time-inversion of the unperturbed process.
Recall the definition of the time--reversed process  $(X_s^*)_{s\in [0,t]}$ given in Section \ref{sec:stationary}. 
In particular, we use the  following
 equality in distribution  (valid for any $s\in [0,t]$)
	\begin{equation}\label{eqdistr}
	 \big( X_s , G_s ( X_{ [0,s]} )\big)  
	 \stackrel{\cL}{=} \big( X_0^* , G_s^* ( X^*_{ [0,s]} ) \big)  
	 \end{equation}
by defining 
	\[  G_s^* ( \xi_{[0,s]} ) := 
	\sum_{u \in (0,s] : \xi_{u-} \neq \xi_u } g^*(s-u , \xi_{u-} , \xi_u ) - 
	\int_0^s g_r (s-u , \xi_u) du\, .  \]

Let us consider first case (1). From \eqref{eqdistr} it follows that 
	\be\label{primavera95}
	\bbE_\pi [ v(X_t) G_t (X_{ [0,t]}) ] 
	 = \bbE_\pi [ v(X_0^*) G_t^* (X^*_{[0,t]} ) ] 
	 = \bbE_\pi [ v(X_0^* ) \bbE_{X_0^*} ( G_t^* (X^*_{ [0,t]} ) )]\, .
	\en
We claim that the process 
	\be\label{rosario}  [0,t] \ni s \mapsto \sum_{u\in (0,s] : X^*_{u-} \neq X^*_u} g^* (t-u , X^*_{u-} , X^*_u) - 
	\int_0^s g^*_{r^*} (t-u , X_u^* ) ds \en
defines a martingale for the probability measure $\bbP_x$ and for $\pi$--a.a.~$x\in \cX$, where $g^*_{r^*}$ denotes the contraction of $g^*$ as in \eqref{def:contractionX}, with respect to the transition kernel $r^*(x,dy)$ in place of $r(x,dy)$. Note that, with some abuse of notation, we have written $\bbP_x$ for the probability referred to the time-reserved unperturbed process starting at $x$.
To prove our claim, we observe that  the above process  \eqref{rosario} defines a local martingale since it is of the form \eqref{E:M}. On the other hand, 
by time--inversion and using Lemma \ref{giacinto} (recall that $g$ satisfies Condition $C[\pi,t]$), we have
\[
 \bbE_\pi \Big[ \sum_{u\in (0,t] : X^*_{u-} \neq X^*_u} |g^* (t-u , X^*_{u-} , X^*_u) | \Big]= 
\bbE_\pi\Big [\sum_{u\in (0,t] : X_{u-} \neq X_u}| g(u , X_{u-} , X_u)| \Big ]<+\infty\,.
\]
As a consequence $ \bbE_x \Big[ \sum_{u\in (0,t] : X^*_{u-} \neq X^*_u} |g^* (t-u , X^*_{u-} , X^*_u) | \Big] \rosso{<+\infty}$ for 
$\pi$--a.a.~$x\in \cX$, thus implying that the process \eqref{rosario} is a martingale for  $\bbP_x$ and   for 
$\pi$--a.a.~$x\in \cX$  as explained in Section \ref{sec:SC} (now referred to the time-reversed unperturbed stationary process).

Due to the above claim,  for 
$\pi$--a.a.~$x\in \cX$, 
	\[ \bbE_x \big[  G_t^* (X^*_{ [0,t]} ) \big] 
	= \bbE_x \bigg[ \int_0^t \big( g_{r^*}^* (t-u , X_u^* ) - g_r (t-u , X_u^* ) \big) du \bigg] \,, \]
from which we gather that  (cf.~\eqref{primavera95})
	\begin{equation}\label{eq:suffices}
	\begin{split} 
	 \bbE_\pi [ v(X_t) G_t (X_{ [0,t]}) ] 
	& = \int_0^t \bbE_\pi \big[ v(X_0^*) \big( g_{r^*}^* (t-u , X_u^* ) - g_r (t-u , X_u^* ) \big) \big] du  
	\\ & = \int_0^t \bbE_\pi \big[ v(X_t) \big( g_{r^*}^* (t-u , X_{t-u} ) - g_r (t-u , X_{t-u} ) \big) \big] du \,,
	\end{split}
	\end{equation}
where the second equality follows from \eqref{eqdistr}. 
This concludes the proof of case (1).
	
The result for case (2) follows by combining \eqref{res2} in Theorem \ref{th:JS} and  
\eqref{eq:suffices} with $v(s , \cdot ) $ and $s$  in place of $v(\cdot )$ and $t$, respectively, giving 
 \begin{multline*}
	\int_0^t ds\, \bbE_\pi\bigl[\rosso{v(s,X_s)} G_s(X_{[0,s]}) \bigr]  =\\
	 \int_0^t ds 
	\int_0^s du \,
	 \bbE_\pi \big[ v(s, X_0^*) \big( g_{r^*}^* (s-u , X_u^* ) - g_r (s-u , X_u^* ) \big) \big]
	\\  = 
	 \int_0^t ds 
	\int_0^s du \, 
	 \bbE_\pi \big[ v(s, X_s) \big( g_{r^*}^* (s-u , X_{s-u} ) - g_r (s-u , X_{s-u} ) \big) \big].
\end{multline*}

For case (3), in light of \eqref{res3} in Theorem \ref{th:JS}, it will suffice to show that for all $s\leq t$ it holds 
	\be\label{posterity} 
	\begin{split} 
	\bbE_\pi [\alpha_r (s,X_s ) G_s ] & = 
	 \int_0^s \bbE_\pi \Big[ \alpha_r (s, X_0^* )
	  \big( g^*_{r^*} (s-u , X_u^* ) - g_r (s-u , X_u^* ) \big) \Big] du 
	  \\ & = 
	  \int_0^s \bbE_\pi \Big[ \alpha_r (s, X_s )
	  \big( g^*_{r^*} (s-u , X_{s-u} ) - g_r (s-u , X_{s-u} ) \big) \Big] du .
	  \end{split} 
	 \en
The derivation of \eqref{posterity} is identical to the proof of \eqref{eq:suffices}  (with $t$ replaced by $s$) and uses the time-inversion identity \eqref{eqdistr}.

\section{Time periodic case: Proof of  Lemma \ref{cop25}, Lemma \ref{apogeo} and  Theorem \ref{alpha_omega}}\label{tortelli}

\begin{proof}[Proof of Lemma \ref{cop25}] Since $r^*(x,y)>0$ whenever $r(y,x)>0$, Assumption \ref{semplice} implies the irreducibility of the Markov jump process with generator $\cL^*$, and this is equivalent to the fact that zero is a simple eigenvalue of $\cL^*$ (trivially the non-zero  constant functions are the associated eigenvectors).

Let us move to the other complex eigenvalues.  Write $f\in L^2(\pi)$ as $f= f_R + i f_I$, where $f_R,f_I$ are real functions. Then
 we have 
 $
\Re \bigl( \la f, \cL^* f \ra \bigr)= 
\la f_R, \cL^* f_R\ra + \la f_I , \cL^* f_I\ra $, $\Re(\cdot)$ denoting the real part.
As  for real functions $g$ we have $\la g, \cL^* g\ra= \la \cL g,  g\ra= \la g, \cL g\ra $ we conclude that 
$\Re \bigl( \la f, \cL^* f \ra \bigr)=  \la f_R , S f_R\ra + \la f_I, S f_I\ra $, where $S= (\cL+\cL^*)/2$. As $S g(x)=\sum_y r_S(x,y) [g(y)-g(x)]$ with $r_S(x,y)= ( r(x,y)+ r^*(x,y))/2$, we find that $S$ itself is the Markov generator of a Markov jump process on $\cX$ with rates $r_S(x,y)$ which are easily seen to satisfy  detailed balance w.r.t.\ $\pi$. We therefore get 
\be \label{ucraina}
\la g, -S g\ra = \frac{1}{2}\sum_x \sum _y \pi(x) r_S(x,y) [ g(y)-g(x) ]^2 \geq 0 \qquad g: \cX \to \bbR\,.
\en
Moreover, since $r_S(x,y)>0$ if 
 $r(x,y)>0$, also  $S$ is irreducible. This implies that $\la g, -S g\ra$ in \eqref{ucraina} is zero if and only if $g$ is constant, and otherwise it is strictly positive. Putting all together, we conclude that $\Re \bigl( \la f, \cL^* f \ra \bigr)< 0$ for any $f:\cX \to \bbC$  which is not constant. 
 Now let $f$ be an eigenvector of $\cL^*$ with eigenvalue $\l \not =0$. We have $\la f, \cL^* f\ra= \l \| f \|^2$.  Hence,   $\Re \bigl( \la f, \cL^* f \ra \bigr)= \Re (\l)\|f\|^2$.  As $f$ is not constant (otherwise we would have  $\l=0$), we conclude that  
$0>\Re \bigl( \la f, \cL^* f \ra \bigr)/\|f\|^2=\Re (\l)$.
\end{proof}

\begin{proof}[Proof of Lemma \ref{apogeo}] Recall that we consider $a,\psi_t$ as column vectors, while we consider $\pi, \pi_\l,\dot\pi$ as row vectors. We write  $A^\t$ for the transpose of a matrix  $A$ and we denote by $D$ 
the diagonal matrix with \rosso{diagonal} $x$--entry given by $\pi(x)$.  Letting $P_T^*:=e^{T \cL^*}$, we have  $(P_T^*)_{x,y}= \bbP_x(X_T^*=y) = (P_T)_{y,x} \pi(y)/\pi(x)$. 
  In particular it holds $P_T^\t= D P_T^* D^{-1}$
and $\dot \pi ^\t=D a $, thus implying that 
 \be \label{coca1}
  \left( \dot \pi (P_T-\bbI)\right)^\t= D (P_T^*-\bbI) a\,.\en
 On the other hand, by Theorem \ref{cor:JS}, time-inversion and the $T$--periodicity of $\psi_s$, we have 
 \be\label{coca2}
 \begin{split}
 (\pi \dot{P}_T)(x) & = \sum_{y} \pi(y) \partial _{\l=0} \bbP_y ( X_T^\l =x) =\partial_{\l=0} \bbE_\pi\big[\mathds{1}_{\{ X^\l_T=x\}}\big]\\
 &= \int _0^T ds \bbE_\pi \big[ \mathds{1}_{\{X_0^*=x\}} \psi_{T-s} (X_s^*)
 \big]
 =\pi(x) \Big(\int _0^T ds \, e^{s \cL^*} \psi_{T-s} \Big)(x)\\
 &=\pi(x) \Big(\int _0^T ds \, e^{s \cL^*} \psi_{-s} \Big)(x) \, . 
 \end{split}
 \en
 Hence, rewriting the members in  \eqref{pianino}  as \eqref{coca1} and \eqref{coca2}, we have  $ D(P_T^*-\bbI) a = -D  \int _0^T ds\,e^{s \cL^*} \psi_{-s} $. We therefore conclude that $a\in L^2_0(\pi)$ solves the equation in $\a$
 \be\label{uva}(P_T^*-\bbI) \a =  - \int _0^T ds\,e^{s \cL^*} \psi_{-s} \qquad \a \in L^2_0(\pi)\,.
 \en
 As  $(P_T^*-\bbI)$ is injective on $L^2_0(\pi)$ (recall that $0$ is a simple eigenvalue of $\cL^*$), we have that the solution in $L^2_0(\pi) $ of the above equation \eqref{uva} is unique. Since  $ \int _0^\infty ds  \,e^{s \cL^*}\psi_{-s}$ belongs to $L^2_0(\pi)$, to conclude the proof it remains 
 to check that  $\a := \int _0^\infty ds  \,e^{s \cL^*}\psi_{-s}$  solves \eqref{uva}. By the $T$-periodicity of $\psi _s$, we have 
 \be\label{molluschi}
 \begin{split}
 \int _0^\infty ds  \,e^{s \cL^*}\psi_{-s}=\int _0^T ds \sum_{k=0}^\infty e^{(s+ kT)  \cL^*} \psi_{-s}=\bigl[\sum_{k=0}^\infty e^{ kT  \cL^*}\bigr] \int_0^T ds\,e^{s \cL^*}\psi_{-s}\,.
 \end{split}
 \en
Since $e^{T \cL^*}= P_T^*$, we gather that $(P_T^*-\bbI) \sum_{k=0}^\infty e^{ kT  \cL^*}= - \bbI$ on $L^2_0(\pi )$. This observation and \eqref{molluschi} imply that $\a = \int _0^\infty ds  \,e^{s \cL^*}\psi_{-s}$  solves \eqref{uva}. 
 \end{proof}
\begin{proof}[Proof of Theorem \ref{alpha_omega}]  Since  $\cX$ is finite (and therefore $\sup_{x\in \cX} \hat r(x)<+\infty$)  and $g$ is bounded, $g$ satisfies Condition $C[\pi,T]$.\\
$\bullet$ {\bf Proof of \eqref{kiriku1}}.
We have 
$\bbE_{\pi_\l}  [v(X^\l_t ) ]=
\sum _x \pi (x) \frac{\pi_{\l,t} (x)}{\pi(x)} v(x)$, hence  $\partial_{\l=0} \bbE_{\pi_\l}  [v(X^\l_t ) ]
= \pi [ \la a_t, v \ra ]$, i.e.\ (by Corollary \ref{superapogeo}) $\partial_{\l=0} \bbE_{\pi_\l}  [v(X^\l_t ) ]= \int _0^\infty ds  \, \la v, e^{s \cL^*}\psi_{t-s}\ra$, which allows to conclude.\\
$\bullet$ {\bf Proof of \eqref{kiriku2}}. By \eqref{kiriku1} it is enough to show that $\partial_{\l=0} \bbE_{\pi_\l}  \Big[\int_0^t v(s, X^\l_s)ds \Big]=\int_0^t  \partial_{\l=0} \bbE_{\pi_\l}  \Big[ v(s, X^\l_s) \Big]ds $. 
To this aim we observe that, by Fubini's theorem,
	\begin{equation}\label{exchange}
	 \begin{split} 
	\partial_{\l=0} \bbE_{\pi_\l}  \Big[\int_0^t v(s, X^\l_s)ds \Big] 
	& =  \partial_{\l=0} \int_0^t \bbE_{\pi_\l}  \big[ v(s, X^\l_s) \big] ds
	\\ & = \lim_{\l \to 0} \int_0^t \frac{ \bbE_{\pi_\l}  \big[ v(s, X^\l_s) \big]  - \bbE_{\pi}  \big[ v(s, X^\l_s) \big]  }{\l } \, ds 
	\\ & \quad + \partial_{\l=0} \bbE_{\pi}  \Big[\int_0^t v(s, X^\l_s)ds \Big]  . 
	\end{split}
	\end{equation}
Note that, since $\cX$ is finite, the assumption $\int_0^t | v(s,x) | ds < \infty $ for all $x \in \cX$ easily implies that $\int_0^t \| v(s, X_s ) \|_{L^p (\bbP_\pi )} ds < \infty $ for all $p>1$. Thus in the last term of \eqref{exchange} the derivative can be exchanged with the integration  as follows by comparing \eqref{res1} and \eqref{res2} in  Theorem \ref{th:JS}. The term in the middle line of \eqref{exchange} equals 
	\be\label{mandi} \begin{split} \sum_{x \in \cX} & \lim_{\l \to 0} \int_0^t v(s,x)  \frac{\bbP_{\pi_\l} (X_s^\l =x )  - \bbP_{\pi} (X_s^\l =x ) }{\l} ds 
	\\ & = \sum_{x \in \cX}  \sum_{y \in \cX}  \pi (y)  \lim_{\l \to 0} \left[\frac 1
	\l \Big( \frac{\pi_\l (y)}{\pi (y) } -1 \Big) \int_0^t v(s,x) \,  \bbP_y ( X_s^\l =x) ds \right]
	\\ & = \sum_{x \in \cX}  \sum_{y \in \cX}  \pi (y) a(y) \int_0^t v(s,x) \bbP_y ( X_s =x) ds 
	, 
	\end{split} 
	\en
where in the last equality we have used the dominated convergence theorem to argue that $\lim_{\l \to 0} \int_0^t v(s,x) \bbP_y (X_s^\l =x ) ds = \int_0^t v(s,x) \bbP_y (X_s =x ) ds$, 
since by assumption  $\int_0^t | v(s,x) | ds < \infty$ for all $x \in \cX$  and   by Theorem  \ref{th:JS}  $\bbP_y(X_s^\l=x)$ is differentiable (and therefore continuous) at $\l=0$. Reasoning  as done for  \eqref{mandi} (but without the use of the dominated convergence theorem), we get that the last expression in \eqref{mandi}  equals $ \int_0^t  \lim_{\l \to 0} \frac{ \bbE_{\pi_\l}  \big[ v(s, X^\l_s) \big]  - \bbE_{\pi}  \big[ v(s, X^\l_s) \big]  }{\l } ds$.

Since we have been able to exchange
the limit with the integral in the term in the middle line of \eqref{exchange} and to exchange the derivative 
 with the integral in the last  term of \eqref{exchange}, we conclude that
	\[  \partial_{\l=0} \bbE_{\pi_\l}  \Big[\int_0^t v(s, X^\l_s)ds \Big]=\int_0^t  \partial_{\l=0} \bbE_{\pi_\l}  \Big[ v(s, X^\l_s) \Big]ds \]
as required.
$\bullet$ {\bf Proof of \eqref{kiriku3}}.
We now focus  on 
$\partial_{\l=0} \bbE_{\pi_\l}  \big [\sum_{s\in (0,t]}  \a(s, X^\l_{s-}, X^\l_s ) \big ]$.

Generalizing 
\eqref{def:contractionX}, we set     $\b _{r^\l_s} (s,  x):= \sum_{y \in \cX} \b(s,x,y) r^\l _s(x,y)  $ for any $\b: [0,t]\times \cX\times \cX\to \bbR$. We claim that the process  $[0,t]\ni s \mapsto \sum_{u\in (0,s]}  \a(u, X^\l_{u-}, X^\l_u )  - \int_0^s 
\a_{r^\l_u }  (u, X^\l_{u} )  du \in \bbR$ defines a martingale w.r.t. $\bbP_{\pi_\l}$ for all $\l$. To prove our claim, we think  of  the process  $(\xi_s)_{s \in[0, t] }$, where  $\xi_s:=(s, X_{s-}^\l, X_s^\l)$ and $X^\l_{0-}:=X^\l_0$, as a PDMP  with  state space $\bbR\times \cX\times \cX$ and with the following local characteristics  \cite[Section~24]{D}:
the 
  jump intensity rate at $(s,x,y)$ is  given by $ \hat{r}^\l_s(y)=\sum_{z\in \cX} r_s^\l(y,z)$, the probability transition kernel
equals $Q( (s,x,y), \cdot )= \sum _{z\in \cX}\big( r^\l_s(y,z)/\hat r^\l_s(y)\big) \d_{(s,y,z)}$ and  the vector fields on $\bbR$ associated to each $(x,y)\in \cX \times \cX$ are given by the unit vector field. Then the claim follows from Item 2 of  \cite{D}[Theorem~(26.12)] applied to the process $(M_s^\a)_{ s\in[0,t] }$ defined therein, since  the integrability condition in the above cited theorem reduces to 
$\bbE_{\pi_\l} [ \sum_{s\in (0,t] }|\a(s, X^\l_{s-}, X^\l_s)|] <+\infty$.
 Due to  Item 1 of  \cite{D}[Theorem~(26.12)]  the above bound is equivalent to the bound $\bbE_{\pi_\l} [ \int_0^t |\a|_{r^\l_s} (s,  X^\l_s)] <+\infty$.    This last bound is fulfilled
 since  the expectation inside can be bounded by $e^{\l \|g\|_\infty}\sum _{x\in \cX}  \int_0^t |\a|_r(s,x)ds$, which is finite by our assumptions. 

Due to the above  claim we find
	\begin{equation}\label{eq:chain}
	 \begin{split}
	\partial_{\l=0} \bbE_{\pi_\l}  \Big [\sum_{s\in (0,t]}  \a(s, X^\l_{s-}, X^\l_s ) \Big ] 
	& = \partial_{\l=0} \bbE_{\pi_\l}  \Big [ \int_0^t \a_{r^\l_s }  (s, X^\l_{s} )  ds \Big] 
	\\ & = \partial_{\l=0} \int_0^t \bbE_{\pi_\l}  \big [  \a_{r^\l_s }  (s, X^\l_{s} ) \big]  ds 
	\\ & = \partial_{\l=0}  \int_0^t \sum_{x \in \cX } \pi_{\l , s } (x) 
	\sum_{y \in \cX } \a (s,x,y) r^\l_s  (x,y) ds .
	\end{split}
	\end{equation}
Similarly to \eqref{exchange}, to see that in the last term of \eqref{eq:chain} the derivative can be taken inside the sign of integration we proceed as follows. Since  $ \pi_{\l , s } (x) =\sum_{z\in \cX}\pi_\l(z) \bbP_z( X^\l_s=x)$ and $ \pi(x) =\sum_{z\in \cX}\pi(z) \bbP_z( X_s=x)$, we can rewrite the last term of \eqref{eq:chain}  as the sum of the following three terms:
\begin{align*}
&A:=\sum_{x,y,z\in \cX} \lim_{\l \to 0}  \int_0^t   \Big[ \frac{\pi_\l(z) -\pi(z)}{\l}    \bbP_z( X^\l_s=x)e^{\l g(s,x,y)}\a(s,x,y)r(x,y)ds\Big]\,,\\
&B:= \sum_{x,y,z\in \cX} \lim_{\l \to 0}   \int_0^t   \pi(z) \frac{ \bbP_z( X^\l_s=x)- \bbP_z( X_s=x)}{\l}e^{\l g(s,x,y)}\a(s,x,y)r(x,y)ds\,,\\
& C:= \sum_{x,y,z\in \cX} \lim_{\l \to 0}   \int_0^t   \pi(z)  \bbP_z( X_s=x) \frac{e^{\l g(s,x,y)}-1}{\l} \a(s,x,y)r(x,y)ds\,.
\end{align*}
For all terms $A,B,C$ we get that they remain  unchanged if we move the limit $\lim_{\l\to 0}$ inside the time integral. This can be achieved as follows.
To deal  with  $A$, we take $ \frac{\pi_\l(z) -\pi(z)}{\l} $ outside the time integral,  we use  that  $\lim_{\l \to 0}   \frac{\pi_\l(z) -\pi(z)}{\l} =\pi(z) a(z)$ and we apply the dominated convergence theorem to get  the limit of  the remaining time integral. Indeed,  the remaining  integrand is bounded for, say, all $\l \in [0,1]$, by 
$ e^{ \| g \|_\infty} |\a |_r ( \cdot ,x)  $, which is integrable on $[0,t]$  by assumption.  To deal with $B$ we use that $\bbP_z( X^\l_s=x)$ differs from its first-order expansion 
$\bbP_z( X_s=x) +\l \bbE_z[ \mathds{1}_{\{X_s=x\}} G_s(X_{[0,s]})]$
by at most $c \l^2$,  where $c$ is a constant  independent from $z$ and $s$ (this  follows from the last statement concerning \eqref{nuvoletta} in Section \ref{sec:derivoX}). We then apply the dominated convergence theorem (we use again that $|\a| _r ( \cdot ,x)  $  is integrable on $[0,t]$  and we bound $\bbE_z[ \mathds{1}_{\{X_s=x\}} G_s(X_{[0,s]})]$ by $\|g\|_\infty (t+\bbE_z[N_t])<+\infty$, $N_t$ being the total number of jumps in the time interval $[0,t]$). To deal with $C$ we just apply the  dominated convergence theorem.

As commented above,  all terms $A,B,C$ remain  unchanged if we move the limit $\lim_{\l\to 0}$ inside the time integral. This allows us to conclude that in the last term of \eqref{eq:chain} the derivative can be taken inside the sign of integration. As a consequence, this term  equals
	\[ \sum_{x \in \cX} \pi (x) \sum_{y \in \cX } \int_0^t \a (s,x,y) 
	  \partial_{\l=0}  \Big( \frac{\pi_{\l , s} (x)}{\pi (x)} \, r^\l_s  (x,y) \Big) ds . \]
Using that 
	\[ \partial_{\l=0}  \Big( \frac{\pi_{\l , s} (x)}{\pi (x)} \, r^\l_s  (x,y) \Big) 
	= (a_s (x) + g(s,x,y)) r(x,y)  \]
we end up with 
	\[ \partial_{\l=0} \bbE_{\pi_\l}  \Big [\sum_{s\in (0,t]}  \a(s, X^\l_{s-}, X^\l_s ) \Big ] 
	= \int_0^t ds \la \a_r (s,\cdot) , a_s \ra 
	+ \int_0^t \bbE_\pi \big[ (\alpha g )_r (s,X_s ) \big] ds  . \]
  As  $a_s= \int _0^\infty du  \,e^{u \cL^*}\psi_{s-u}$ (see Corollary \ref{superapogeo}) we have 
\be\label{oceano16X}
\begin{split}
&  \int_0^t ds \la \a_r (s,\cdot) , a_s \ra = \int_0^t ds  \int _0^\infty du  \la \a_r (s,\cdot)  \,e^{u \cL^*}\psi_{s-u}\ra\\
 & = \int_0^t ds  \int _0^\infty du \, \la  e^{u \cL } \a_r (s,\cdot) , \psi_{s-u}\ra
= \int_0^t ds \int _0^\infty du \, 
\bbE_\pi \Big[ \alpha_r (s, X_u ) \psi_{s-u}( X_0 ) \Big ] \,,
\end{split}
\en
thus giving the identity 
	\[ \begin{split}
	\partial_{\l=0} \bbE_{\pi_\l}  \Big [\sum_{s\in (0,t]}  \a(s, X^\l_{s-}, X^\l_s ) \Big ]  
	 = &  \int_0^t \bbE_\pi \big[ (\alpha g )_r (s,X_s ) \big] ds 
	\\  & +   \int_0^t ds \int _0^\infty du \, 
\bbE_\pi \Big[ \alpha_r (s, X_u ) \psi_{s-u}( X_0 ) \Big ] . 
\end{split} \]
\end{proof}
\section{\rosso{Proof of Theorems \ref{teo_CM} and \ref{teo_CM_esteso}}}
\subsection{\rosso{Proof of Theorem \ref{teo_CM}}}\label{sec_dim_teo_CM}
By \eqref{matteo105} \rosso{we have}
$
V_\l (t)= 
\sum_{e:|e|=1 }  \exp \{ \l \cos(\o t ) e \cdot v\}  \bbE_{\pi_\l}\bigl[ r (X^{\l}_t,X^\l_t+e) \bigr] e$.
Hence
\be
\begin{split}
\partial_{\l=0} V_\l (t) & = \sum_{e:|e|=1 }   \cos(\o t ) ( e \cdot v) \bbE_{\pi}\bigl[ r (X_t,X_t+e) \bigr] e\\
& +\sum_{e:|e|=1 } \partial_{\l=0} \bbE_{\pi_\l}\bigl[ r (X^{\l}_t,X^\l_t+e) \bigr] e=: A+B\,.
\end{split}
\en
By stationarity  $ \bbE_{\pi}\bigl[ r (X_t,X_t+e) \bigr]= \pi\bigl[ r(\cdot, \cdot+e) \bigr]$. This observation allows to  rewrite the j$^{th}$ coordinate of the vector $A$ as  $A_j=  \cos(\o t )   v_j  \pi[ c_j]= \Re\bigl( e^{i \o t} v_j \pi [c_j]\bigr)$.
On the other hand, by \eqref{telefono}  we have  
\[
B_j=  \partial_{\l=0} \bbE_{\pi_\l}\bigl[ \g_j (X^{\l}_t) \bigr] = \Re\Big( e^{i \o t} \la \g_j, (i \o -\cL^*)^{-1} (\Psi\cdot v) \ra  \Big)\,.
\]
Hence 
\be
\bigl( \partial_{\l=0} V_\l (t) \bigr)_j= \Re\Big( e^{i \o t} \Big(v_j \pi [c_j]+ \la \g_j, (i \o -\cL^*)^{-1} (\Psi\cdot v) \ra  \Big)\Big)=\Re\Big( e^{i\o t} \sum_{k=1}^d \s(\o)_{j,k} v_k \Big)
\en
where 
$ \s(\o)_{j,k}= \pi [c_j]\d_{j,k}+ \la \g_j, (i \o -\cL^*)^{-1} \Psi_k \ra $. This allows to get \eqref{stilton}, \eqref{jabba1} and \eqref{jabba2} (recall \eqref{linz}).

Let us conclude by showing that the matrix $\s(\o)$ in \eqref{jabba2} is symmetric for the reversible random walk. It is enough to show that  $\la \g_j, (i \o -\cL)^{-1} \g_k \ra= \la \g_k, (i \o -\cL)^{-1} \g_j \ra$ for all $j,k$. As $\cL=\cL^*$, we have $\la \g_j, (i \o -\cL)^{-1} \g_k \ra=\la (-i \o -\cL)^{-1} \g_j,  \g_k \ra$. As $\g_j,\g_k$ are real functions, we have 
\begin{equation*}
\begin{split}
& \la (-i \o -\cL)^{-1} \g_j,  \g_k \ra= \sum _x \pi(x) \overline{\left( (-i \o -\cL)^{-1} \g_j\right) (x)} \g_k(x)\\
& = \sum _x \pi(x) \left( (i \o -\cL)^{-1} \g_j\right)  (x) \overline{ \g_k(x)} =  \la \g_k, (i \o -\cL)^{-1} \g_j \ra\,. \end{split}
\end{equation*}
\qed
\subsection{Proof of Theorem \ref{teo_CM_esteso}}\label{sec_dim_teo_CM_esteso} 
By \eqref{matteo106}  we have
\[  V_\l (t) = \sum_{z\in \cZ}  \exp\left\{  \l \cos(\o t) \left(z \cdot v\right)\right\}
\bbE_{\pi_\l} 
\big[
   r (X^{\l}_t, X^{\l}_t+z ) 
  \big] z\,.
  \] Hence
\be
\begin{split}
\partial_{\l=0} V_\l (t) & = \sum_{z\in \cZ }   \cos(\o t ) ( z \cdot v) \bbE_{\pi}\bigl[ r (X_t,X_t+z) \bigr] z\\
& +\sum_{z\in \cZ } \partial_{\l=0} \bbE_{\pi_\l}\bigl[ r (X^{\l}_t,X^\l_t+z) \bigr] z=: A+B\,.
\end{split}
\en
The j$^{th}$ coordinate of the vector $A$ is given by 
\[ A_j=\sum_{k=1}^d\Re\Big( e^{i \o t} \big( \sum_{z\in\cZ} z_j z_k \pi [ r(\cdot, \cdot+z)] \big)
   \Big)v_k\,.\]
From this point onwards  the conclusion of the proof is then identical to that of Theorem \ref{teo_CM}.
\qed
%
%
%
  \bigskip 
 \bigskip 
 
  \appendix \section{Local martingales for Markov jump processes} \label{appendix}
Fixed $x_0\in \cX$ we consider here the unperturbed Markov process $X:=(X_s)_{s\geq 0}$ starting at $X_0=x_0$ assuming it does not explode and 
  apply the analysis in \cite[App.~A5]{D} to the process $Y:=(Y_s  )_{s\geq 0}$ defined as $Y_s:=(X_{s-}, X_s)$ for $s>0$ and $Y_0:=(x_0,x_0)$. The process $Y$ can be described via the  formalism in \cite[App.~A1]{D}.  To this aim we define  $T_1,T_2,\dots$ as the jump times of $Y$  and set $T_0:=0$, $S_k:= T_k-T_{k-1}$ for $k\geq 1$ and $Z_k := Y_{T_k}\in \cX\times \cX$ for $k\geq 1$. 
  Note that the jump times $T_1<T_2<\dots $ of the process $Y$ coincide with the jump times $\t_1<\t_2<\dots$ of the process $X$.
  Then the process $(x_s)_{s\geq 0} $ in \cite[page~257]{D} associated to the sequence $(S_k,Z_k)_{k\geq 1}$ corresponds to $Y$. We point out that the functions $\mu^k$ introduced in \cite[page~258]{D}  are the following:  $\mu^1$ is the law of $(S_1,Z_1)$ and, for $k\geq 1$,  $\mu^{k}( \o_1,\o_2, \dots, \o_{k-1};\cdot)$ is the law of $(S_k,T_k)$ conditional on the event that $(S_1,Z_1)=\o_1$, $(S_2,Z_2)=\o_2, \ldots , (S_{k-1},Z_{k-1})=\o_{k-1}$ (if the above event has positive probability, otherwise the definition of $\mu^{k}( \o_1,\o_2, \dots, \o_{k-1};\cdot)$ does not play any role).

  \medskip

  We now move to  \cite[App.~A5]{D} and explain how the key objects there read in our context. Below $A$ is a measurable subset of  $ \cX\times \cX$ and $u,s,t$ are  times in  $\bbR_+$.
  
 We want to compute  $\Phi^A_1 (s)$ introduced in \cite[App.~A5]{D}. Setting  
  $F^{A,1}(u):=\bbP_{x_0}( S_1>u, Z_1\in A)$, we have 
 $ \Phi^A_1 (s) :=- \int_{(0,s]} \frac{1}{F^{\cX\times \cX,1} (u-)} d F^{A,1}(u)$. Therefore 
 \[
  d\Phi^A_1 (s)=\bbP_{x_0}( S_1\in  (s,s+ds], Z_1\in A\,|\, S_1\geq s)=  \hat r (x_0) ds \int _\cX\frac{ r(x_0, dx_1)}{\hat r (x_0)} \mathds{1}_A(x_0,x_1)
  \]
and therefore 
$\Phi^A_1 (s)=s  \int _\cX r(x_0, dx_1)\mathds{1}_A(x_0,x_1)= s (\mathds{1}_A)_r (x_0) $.

   We now want to compute  $\Phi^A_2 (\omega_1,s)$ of \cite[App.~A5]{D} with $\omega_1 =(s_1, x_0, x_1)$.
  Setting 
   $F^{A,2}(  (s_1, x_0, x_1),  u): =\bbP_{x_0}( S_2>u, Z_2\in A | S_1=s_1, Z_1=(x_0,x_1) )$, we have 
    \[\Phi^A_2( (s_1,x_0,x_1),s):=  -\int_{(0,s]}
    \frac{1}{F^{\cX\times \cX,2} ( (s_1,x_0,x_1),u-) }d F^{A,2}(  (s_1, x_0, x_1),  u)\,.
    \]Therefore
     \begin{equation*}
     \begin{split}
     d\Phi^A_2( (s_1,x_0,y),s)& =
     \bbP_{x_0}( S_2\in (s,s+ds], Z_2\in A\,|\, S_1=s_1, \;Z_1=(x_0,x_1),\; S_2\geq s)\\
   &   =
      ds \int _\cX r(x_1, dx_2)\mathds{1}_A(x_1,x_2)
  \end{split}
  \end{equation*}
and therefore  $\Phi^A_2( (s_1,x_0,x_1),s)=s  (\mathds{1}_A)_r (x_1 )$.

All other functions $\Phi^A_k$ can be computed similarly.
  It then follows that, for  $s\in  (T_{k-1},T_k]$,  the function $\tilde p(s,A)$ defined in \cite[page~276]{D} equals
 \[   \tilde p (s,A) = S_1 (\mathds{1}_A)_r(X_0)+ S_2 (\mathds{1}_A)_r (X_{T_1})+\cdots + S_{k-1} (\mathds{1}_A)_r (X_{T_{k-2}})+ (s-T_{k-1}) (\mathds{1}_A)_r (X_{T_{k-1}})\,.
  \]   
On the other hand $p(s,A)$ and $q(s,A)$  in \cite[App.~A5]{D} are given by $p(s,A):= \sum _{ u\in (0,s]} \mathds{1}_A (X_{u-},X_u)$ and $q(s,A)=p(s,A)-\tilde p(s,A)$. 
    Hence, given a measurable function  $\a:[0, \infty ) \times \cX\times \cX \to \bbR$, we have
  \begin{equation} \label{Ma}
  M_s^\a:= \int _{(0,s]\times\cX\times \cX} \a(u, x,y) q(du, dx, dy)
  =\sum_{u\in (0, s ] } \a (u, X_{u-}, X_u) -
  \int _0 ^s  \a_r (u,X_u) du \,.
  \end{equation}
 Recall from 
\cite[pages~270,~276]{D} that the (deterministic) measurable function $\a:[0,+\infty)\times \cX\times \cX \to \bbR$ belongs to $L_1^{\textrm{loc}}(p) $ if there exists a non-decreasing sequence of stopping times $(\xi_n)_{n\geq 1} $ such that $\xi_n \to \infty  $ almost surely as $n \to\infty$ and $ \a \mathbf{1}_{[0,\xi_n) }$ is in $L^1 (p) $ for all $n\geq 1$, i.e.
	\[\bbE_{x_0} \bigg( \sum_{u \in (0,s]} |\a ( u ,X_{u-}  ,X_u ) |  \mathbf{1}_{[0, \xi_n ) } (u) \bigg) 
	= \bbE_{x_0} \bigg( \sum_{u \in (0,s] \cap (0,\xi_n )} |\a ( u ,X_{u-}  ,X_u ) |\bigg) < \infty 
	 \]
for all $n\geq 1 $. 
The space $L_1^{\textrm{loc}}(\tilde{p})$ can be defined analogously by integrating with respect to $\tilde{p}$ rather than $p$, and it is proved in \cite[Proposition (A4.5) and page~276]{D} that $ \a \in L_1^{\textrm{loc}}(p) $ if and only if $ \a \in L_1^{\textrm{loc}}(\tilde{p}) $. Moreover, if $\a \in L_1^{\textrm{loc}}(p) $ then the process $M^\a $ defined in \eqref{Ma}  above  is a local martingale  \cite[Proposition (A5.3)]{D}.

We conclude this appendix by showing that if $\a : [0, \infty ) \times \cX\times \cX \to \bbR$ is a measurable function satisfying \eqref{welldef}, then the associated process $M^\a$ defined in \eqref{Ma} is a local martingale. To this end it will suffice to show that $\a \in L_1^{\textrm{loc}}(p) $.  Indeed, define the non-decreasing sequence of stopping times $(\xi_n)_{n\geq 1}$ by setting $\xi_n := \inf\{ s \in [0, \infty )  : \sum_{u \in (0,s] } |\a ( u , X_{u-} , X_u )| \geq n \} $.  Then the requirement $\a \mathbf{1}_{[0,\xi_n) } \in L_1 (p) $ is trivially satisfied, and $\xi_n \to\infty  $ as $n \to\infty$ by the non-explosion assumption of the unperturbed process  in $[0, \infty )$.

When working with functions $\a : [0, t] \times \cX\times \cX \to \bbR$ as in the previous sections,    one can apply the above results by setting $\a(s,\cdot, \cdot)=0$ for $s >t$. As a consequence,  the process $(M_s)_{s\in [0,t]}$ defined by \eqref{E:M} is a local martingale.

\section{\rosso{Comparison with linear response when starting with the invariant distribution of the perturbed process}}\label{sec:pollofritto}

 Trivially, our results apply also to  a time-independent  
 perturbation function $g$. In this case  we  write the perturbed rates simply as $r^\l(x,dy)=r(x,dy) e^{\l g(x,y)}$. Due to  time-independence, one can ask whether the perturbed Markov jump  process admits an invariant distribution and if this is unique at cost of restricting to the class $\cC$ of distributions which are absolutely continuous w.r.t.\ the invariant distribution $\pi$ of the unperturbed Markov jump process. In the case where there is a unique invariant distribution $\pi_\l$ (in $\cC$), it is natural then to investigate the linear response of the perturbed system with  initial distribution given by $\pi_\l$ (analogously to what we have done in Theorem \ref{maldive} for the OSS when $g$ is time-periodic).
 
Concerning the linear response, using the same notation of Proposition \ref{derivoX}, at a  formal level we would  get 
 \be\label{flautino}
 \partial_{\l=0} \bbE_{\pi_\l} \bigl[ F \bigl(  X^\l _{ [0,t] } \bigr) \bigr] =\partial_{\l=0} \bbE_{\pi_\l} \bigl[ F \bigl(  X_{ [0,t] } \bigr) \bigr]    +\partial_{\l=0} \bbE_{\pi} \bigl[ F \bigl(  X^\l _{ [0,t] } \bigr) \bigr]  \,.
\en
 Our results in Section \ref{sec:stationary} give information on the term 
 $\partial_{\l=0} \bbE_{\pi} \bigl[ F \bigl(  X^\l _{ [0,t] } \bigr) \bigr]$ in the r.h.s.. Proving existence and uniqueness of $\pi_\l$ and analyzing the  term $\partial_{\l=0} \bbE_{\pi_\l} \bigl[ F \bigl(  X_{ [0,t] } \bigr) \bigr]$ in the r.h.s.\ is usually hard (cf.\ e.g.\ \cite{FGS2,GGN,GMP,HM,KO,LR,MP}).  The analysis simplifies when one can use perturbation theory, e.g.\ when    the unperturbed process is an irreducible Markov chain with finite state space  $\cX$.    This case is  indeed covered by Section \ref{sec_OSS} as a degenerate case, since any time-independent function $g$ is also $T$--periodic (for any $T>0$) and in this case the OSS coincide with the stationary state.  In particular, Theorems \ref{alpha_omega} and \ref{maldive}  give the linear response of the perturbed system with  initial distribution given by $\pi_\l$. 
 
  We now sketch a direct analysis of term  $\partial_{\l=0} \bbE_{\pi_\l} \bigl[ F \bigl(  X_{ [0,t] } \bigr) \bigr]$ for this particular case  (i.e.~irreducible Markov chain with finite state space), without passing through time-periodic systems. As well as giving a more natural derivation, this will  also  explain why 
the decomposition \eqref{flautino} leads indeed to the same formulas appearing in 
  Theorems \ref{alpha_omega} and \ref{maldive}.

Recall the definition of the operator $\cL^*$ and its rates $r^*(x,y)$ given in Section \ref{sec_OSS}.
  Since $r^\l(x,y)>0$ if and only if $r(x,y)>0$, the perturbed Markov chain remains irreducible and therefore it has a unique invariant distribution $\p_\l$ (since $\p(x)>0$ for all $x \in \cX$, trivially $\p_\l\ll \p$).
 The time-invariance of $\pi_\l$ corresponds to the system 
  \be\label{sole25}
  \sum_y \left( \pi_\l (y) r^\l(y,x)  -\pi_\l(x) r^\l(x,y)\right)=0 \qquad \forall x\in \cX\,.
 \en
If we see $\pi_\l$ as a row vector and the infinitesimal generator $\cL^\l$ of the perturbed Markov chain  as a matrix, the identity \eqref{sole25} corresponds to $\pi_\l \cL^\l =0$. By matrix perturbation theory \cite{Ka}, we get that $\pi_\l$ is differentiable at $\l=0$. Therefore, setting $\dot \p(x) := \partial _{\l=0} \pi_\l(x)$, from \eqref{sole25} we get 
\be\label{luna25}
\sum _y \big(\dot\p(y) r(y,x)-\dot\pi(x) r(x,y)\big)= \sum_y \big( \pi(x) r(x,y) g(x,y)-\pi(y) r(y,x) g(y,x) \big) \,,\; \forall x\in \cX.
\en
Dividing by $\pi(x)$, using the intertwining relation  $\pi(a) r(a,b)=\pi(b) r^*(b,a)$ and that $\sum_y r(x,y)= \sum _y r^*(x,y)$ (which can be derived from $\sum_y\pi(x)r(x,y)=\sum_y \pi(y) r(y,x)$ and the above intertwining relation),
we get that \eqref{luna25} is equivalent to 
 \be\label{tresoli}\cL^* \frac{\dot\pi}{\pi}=- \psi\,, \qquad   \frac{\dot\pi}{\pi}(x):= \frac{\dot\pi(x)}{\pi(x)}\,, \qquad \psi(x)= \sum_y\big( r^*(x,y) g(y,x)-r(x,y)g(x,y)\big)\,. 
 \en
Note that $\psi(x)=\psi_t(x)$ for all $t\geq 0$, with $\psi_t$ defined as in \eqref{def:psi} and \eqref{def:psi:bis}.
 Since $\cL^*$ is an isomorphism when restricted to $L^2_0(\pi)$  (see Section~\ref{sec_OSS}) and $\frac{\dot\pi}{\pi},\psi \in L^2_0(\pi)$ (as can be easily checked), with the notation introduced in \eqref{raggio} and due to Lemma \ref{cop25}   we get $\frac{\dot\pi}{\pi}= -(\cL^*)^{-1} \psi=\int_0^\infty e^{s \cL_*} \psi ds $.  Note that this last identity coincides with that of Lemma \ref{apogeo}.  In the same context of Proposition \ref{derivoX} we then  get that 
 \be\label{auto}
 \begin{split}
 \partial_{\l=0} \bbE_{\pi_\l} \bigl[ F \bigl(  X_{ [0,t] } \bigr) \bigr]=
 \bbE_{\pi}\bigl[ -(\cL^{-1})^* \psi (X_0) F \bigl(  X_{ [0,t] } \bigr) \bigr]\\
 = 
 \int _0^\infty  \bbE_{\pi}\bigl[ e^{s \cL^*}\psi (X_0) F \bigl(  X_{ [0,t] } \bigr) \bigr]ds\,.
 \end{split}
 \en
 If e.g.~one takes $F \bigl(  \xi_{ [0,t] } \bigr) =v(\xi_t)$, the rightmost term in  \eqref{auto} equals 
 \[ \int _0^\infty   \bbE_{\pi}\bigl[ \psi (X_0) v(  X_{t+s}) \bigr]ds =   \int_t^\infty \bbE_\pi[ \psi(X_0) v(X_s)]ds\,. \]  Hence, due to decomposition \eqref{flautino} and the first formula in Theorem \ref{cor:JS}, we obtain \eqref{kiriku1} in Theorem \ref{alpha_omega}. The same analysis can be carried out also for \eqref{kiriku2} and \eqref{kiriku3}. 
 
We stress that the  above derivation is based on matrix perturbation theory, while for more general stochastic systems more sophisticated approaches are necessary (cf.~\cite{HM,KO,MP}).

\end{document}